\def\comment#1{{\sf{[#1]}}}
\def\Z{{\mathbb Z}}
\def\Q{{\mathbb Q}}
\def\N{{\mathbb N}}
\def\R{{\mathbb R}}
\def\C{{\mathbb C}}
\def\D{{\mathbb D}}
\def\P{{\mathbb P}}
\def\bA{{\mathbb A}}
\def\L{{\mathbb L}}
\def\A{{\mathcal A}}
\def\F{{\mathcal F}}
\def\H{{\mathcal H}}
\def\I{{\mathcal I}}	
\def\cL{{\mathcal L}}
\def\O{{\mathcal O}}
\def\U{{\mathcal U}}
\def\cG{{\mathcal G}}
\def\cP{{\mathcal P}}
\def\cR{{\mathcal R}}
\def\a{{\mathfrak a}}
\def\f{{\mathfrak f}}
\def\g{{\mathfrak g}}
\def\h{{\mathfrak h}}
\def\o{{\mathfrak o}}
\def\p{{\mathfrak p}}
\def\r{{\mathfrak r}}
\def\t{{\mathfrak t}}
\def\u{{\mathfrak u}}
\def\G{{\Gamma}}
\def\Ahat{{\widehat{\A}}}
\def\Ghat{{\hat{\G}}}
\def\cGhat{{\widehat{\cG}}}
\def\Htilde{\widetilde{H}}
\def\gammatilde{{\tilde{\gamma}}}
\def\uhat{{\widehat{\u}}}
\def\IA{{I\!A}}
\def\IO{{I\!O}}
\def\cIA{{\I\!\A}}
\def\cIO{{\I\!\O}}
\def\ia{{\mathfrak{ia}}}
\def\io{{\mathfrak{io}}}
\def\Dtilde{{\widetilde{D}}}
\def\Ptilde{{\widetilde{P}}}
\def\Vtilde{{\widetilde{V}}}
\def\Rhat{{\widehat{R}}}
\def\Uhat{{\widehat{\U}}}
\def\Fbar{{\overline{F}}}
\def\rhotilde{{\tilde{\rho}}}
\def\thetatilde{{\tilde{\theta}}}
\def\phitilde{{\tilde{\phi}}}
\def\rhohat{{\hat{\rho}}}
\def\bmu{\pmb{\mu}}
\def\sl{\mathfrak{sl}}
\def\sp{\mathfrak{sp}}
\def\gl{\mathfrak{gl}}
\def\Sp{{\mathrm{Sp}}}
\def\SL{{\mathrm{SL}}}
\def\GL{{\mathrm{GL}}}
\def\SLhat{\widehat{\SL}}
\def\un{\mathrm{un}}
\def\fin{\mathrm{fin}}
\def\dot{{\bullet}}
\def\bs{\backslash}
\def\comp{~\widehat{\!}{\;}}
\def\ll{\langle\langle}
\def\rr{\rangle\rangle}
\def\triv{{\mathbf 1}}
\def\Vec{{\sf Vec}}
\newcommand\im{\operatorname{im}}               
\newcommand\id{\operatorname{id}}
\newcommand\ad{\operatorname{ad}}
\newcommand\Ad{\operatorname{Ad}}
\newcommand\coker{\operatorname{coker}}
\newcommand\Span{\operatorname{span}}
\newcommand\Spec{\operatorname{Spec}}
\newcommand\Hom{\operatorname{Hom}}
\newcommand\Homcts{\operatorname{Hom_{\mathrm{cts}}}}
\newcommand\End{\operatorname{End}}
\newcommand\Aut{\operatorname{Aut}}
\newcommand\Der{\operatorname{Der}}
\newcommand\Out{\operatorname{Out}}
\newcommand\OutDer{\operatorname{OutDer}}
\newcommand\Gr{\operatorname{Gr}}
\newcommand\Diff{\operatorname{Diff}}
\newcommand\codim{\operatorname{codim}}
\newcommand\Ind{\operatorname{Ind}}
\newcommand\Res{\operatorname{Res}}
\newcommand\Fun{\operatorname{Fun}}
\newtheorem{theorem}{Theorem}[section]
\newtheorem{lemma}[theorem]{Lemma}
\newtheorem{proposition}[theorem]{Proposition}
\newtheorem{corollary}[theorem]{Corollary}
\theoremstyle{definition}
\newtheorem{definition}[theorem]{Definition}
\newtheorem{example}[theorem]{Example}
\theoremstyle{remark}
\newtheorem{remark}[theorem]{Remark}
\newtheorem{convention}[theorem]{Convention}
\begin{document}

\title[Relative Weight Filtrations]{Relative Weight Filtrations on
Completions of Mapping Class Groups}

\dedicatory{To Professor Shigeyuki Morita on his 60th birthday}

\author{Richard Hain}
\address{Department of Mathematics\\ Duke University\\
Durham, NC 27708-0320}
\email{hain@math.duke.edu}

\thanks{Supported in part by the National Science Foundation through grants
DMS-0405440 and DMS-0706955.}

\date{\today}



\maketitle

\tableofcontents

\section{Introduction}

One of the primary themes of Morita's work over the last 20 years has been the
study of the structure of mapping class groups via their actions on nilpotent
quotients of surface groups. A secondary theme has been the relation of this
work to the study of invariants of 3-manifolds and homology spheres. The goal of
this paper is to introduce to topologists a new tool that may be useful in these
pursuits.

The tool is the relative weight filtration of the relative completion of a
mapping class group of a surface that is associated to a system of simple closed
curves on the surface. Establishing the existence of relative weight filtrations
on completions of mapping class groups is non-trivial and was established with
Makoto Matsumoto using Galois actions in \cite{hain-matsumoto:mcg}, and with
Gregory Pearlstein and Tomohide Terasoma using Hodge theory in
\cite{hain-pearlstein}. The existence of relative weight filtrations on
completions of mapping class groups follows from general results about
fundamental groups of smooth varieties and because all mapping class groups
occur as fundamental group of moduli spaces of curves. The general theories of
the Hodge and Galois theory of fundamental groups of algebraic varieties imply
that relative weight filtrations have strong exactness properties.

The paper begins with an exposition for topologists of the theory of relative
completion of discrete groups. It is illustrated by the examples of mapping
class groups and automorphism groups of free groups. The paper continues with an
exposition of the weight filtration associated to a nilpotent endomorphism of a
vector space, and its generalization, the relative weight filtration associated
to a nilpotent endomorphism of a {\em filtered} vector space, which is due to
Deligne \cite{deligne:weil2} and was further developed by Steenbrink and Zucker
\cite{steenbrink-zucker}, and Kashiwara \cite{kashiwara}. Since the generic
nilpotent endomorphism of a filtered vector space does not have a relative
weight filtration, the existence of a relative weight filtration of a nilpotent
endomorphism of a filtered vector space imposes non-trivial restrictions on the
endomorphism. (Cf.\  \cite{steenbrink-zucker}.)

Relative weight filtrations appear in the study of mapping class groups in the
following context. Suppose that $S$ is a compact oriented surface of genus $g$
which, for simplicity, we suppose to be $\ge 3$. Denote the relative completion
of its mapping class group $\G_S$ by $\cG_S$. This is a proalgebraic group
(defined over $\Q$) that is an extension
$$
1 \to \U_S \to \cG_S \to \Sp(H_1(S)) \to 1
$$
of the symplectic group that is associated to the first homology of $S$ and its
intersection form by a prounipotent group $\U_S$, which is essentially (but not
quite) the unipotent completion of the Torelli group $T_S$ of $S$. (Cf.\
\cite{hain:torelli}.) There is a natural Zariski dense homomorphism $\G_S \to
\cG_S$; the image of the Torelli group $T_S$ is Zariski dense in $\U_S$. The Lie
algebra of $\cG_S$ is an extension
$$
0 \to \u_S \to \g_S \to \sp(H_1(S)) \to 0
$$
of the symplectic Lie algebra associated to $H_1(S)$ by $\u_S$, which is a
pronilpotent Lie algebra. It has a natural weight filtration which is defined by
$$
W_0\g_S = \g_S,\ W_{-1}\g_S = \u_S,\ W_{-m}\g_S = L^m\u_S\ (m\ge 1),
$$
where $L^m\u_S$ denotes the $m$th term of the lower central series of $\u_S$. A
system $\gamma = \{c_0,\dots,c_m\}$ of disjoint simple closed curves on $S$
determines commuting Dehn twists $\tau_0,\dots,\tau_m$. Their product
$\tau_\gamma$ lies in a prounipotent subgroup of $\cG_S$ and has a unique
logarithm $N_\gamma := \log \tau_\gamma \in \g_S$ whose adjoint action
$$
\ad(N_\gamma) : \g_S \to \g_S
$$
preserves the weight filtration $W_\dot$. It therefore induces an inverse system
of nilpotent endomorphisms
$$
\ad(N_\gamma) : \g_S/W_{-n}\g_S \to \g_S/W_{-n}\g_S
$$
each of which preserves the induced weight filtration $W_\dot$. General results
in Hodge and Galois theory imply the existence of the relative weight filtration
$M_\dot^\gamma$ of $\g_S$ associated to the curve system $\gamma$. This
filtration is compatible with the bracket in the sense that the bracket induces
a map
$$
M_r^\gamma\g_S \otimes M_s^\gamma\g_S \to M_{r+s}^\gamma \g_S.
$$
This filtration and the weight filtration $W_\dot$ have very strong exactness
and naturality properties. For example, there is a natural (though not
canonical) isomorphism of pronilpotent Lie algebras
$$
\g_S \cong \prod_{m,k} \Gr^{M^\gamma}_k\Gr^W_m \g_S.
$$
There are similar results when $S$ has decorations, such as points and boundary
components. The Lie algebra $\p(S,x)$ of the unipotent completion of
$\pi_1(S,x)$ also has a relative weight filtration associated to each curve
system $\gamma$ of $(S,x)$. The natural action $\g_S \to \Der \p(S,x)$ preserves
the relative weight filtration $M_\dot^\gamma$ as well as the weight filtration
$W_\dot$. This map has strong exactness properties with respect to both of these
filtrations. In particular, it is compatible with their identifications with
their associated bigraded objects.

Since the bracket preserves the relative weight filtration, $M^\gamma_k\g_S$ is
a subalgebra of $\g_S$ whenever $k\le 0$. These correspond to subgroups
$M_k^\gamma\cG_S$ ($k\le 0$) of $\cG_S$. The subalgebras $M^\gamma_0\g_S$,
parametrized by curve systems $\gamma$ on $S$ are natural analogues of parabolic
subalgebras of semi-simple Lie algebras as they are parametrized by the boundary
components of the corresponding moduli space of curves and they equal their
normalizers in $\g_S$, as we establish in Proposition~\ref{prop:parabolic}.

A particularly interesting case occurs when the curve system $\gamma$ is
maximal. Maximal curve systems correspond to pants decompositions. Since each
oriented ``pair of  pants'' naturally bounds a ball, a pants decomposition of
$S$ determines a handlebody $U$ with boundary $S$ in which each $c_j \in \gamma$
bounds an imbedded disk in $U$. The handlebody $U$ is unique in the sense that
if $V$ is another such handlebody, then there is a diffeomorphism $U \to V$
which restricts to the identity on $S$. In Section~\ref{sec:handle_filt} we use
Morse Theory to show that the relative weight filtration corresponding to a
pants decomposition depends only on the handlebody $U$ that it determines. We
denote it by $M_\dot^U$. In Section~\ref{sec:handle_gps} we use the exactness
properties of the relative weight filtration and results of Griffiths, Luft and
Pitsch \cite{griffiths,luft,pitsch} to show that the relative weight filtrations
$M_\dot^U$ and $M_\dot^V$ of $\g_S$ associated to two handlebodies $U$ and $V$
are equal if and only if there is a diffeomorphism $U \to V$ whose restriction
to their boundaries is the identity $S\to S$.

Each handlebody $U$ with boundary $S$ determines a subgroup $\Lambda_U$ of
$\G_S$ which consists of those elements of $\G_S$ that extend to an isotopy
class of diffeomorphisms of $U$. In the pointed case, we combine results of
Griffiths, Luft and Pitsch \cite{griffiths,luft,pitsch} with the exactness
properties of $M^U_\dot$ to prove that
$$
\Lambda_{U,x} = \G_{S,x} \cap M^U_0 \cG_{S,x},\
\Lambda_{U,x}\cap M^U_{-2}\cG_{S,x} = \ker\{\Lambda_{U,x} \to \Aut\pi_1(U,x)\}.
$$
These results provide an upper bound on the size of $\Lambda_{U,x}$ in
$\G_{S,x}$. They also imply that there is an injection
$$
\Aut\pi_1(U,x) \to \Gr^{M_U}_0\cG_{S,x}.
$$
This induces a homomorphism $\A_g \to \Gr^{M_U}_0\cG_{S,x}$ from the relative
completion $\A_g$ of $\Aut\pi_1(U,x)$. In Section~\ref{sec:handle_gps} we show
that this homomorphism is not surjective, which implies the unexpected result
that $\Lambda_{U}$ is not Zariski dense in $M_0^U\cG_{S}$. This implies that the
relative weight filtration of $\cG_{S}$ is not obtained simply by taking the
Zariski closure in $\cG_{S}$ of a filtration of $\G_{S}$.

We give an application to the problem of determining which elements of $\G_S$
extend to a handlebody. Very similar results have been obtained independently by
Jamie Jorgensen \cite{jorgensen} by different methods. If $S$ bounds the 
handlebody $U$, then the elements of $\G_S$ that extend to some handlebody is
$$
C = \bigcup_{\phi \in \G_S} \phi \Lambda_U \phi^{-1}.
$$
In Section~\ref{sec:applications}, we use properties of $M_\dot^U$ to show that
the Zariski closure of the intersection of $C$ with the $m$th term of the lower
central series of $\U_S$ is a proper (i.e., $\subsetneq$) closed subvariety of
the $m$th term of the lower central series of $\U_S$ for all $m\ge 1$ when $g\ge
7$ and slightly restricted ranges when $3\le g < 7$.

A more substantial potential application should be to finite type invariants of
3-manifolds and homology 3-spheres. The set of all genus $g$ Heegaard
decompositions of 3-manifolds and homology 3-spheres are the double coset spaces
$$
\Lambda_U \bs \G_S /\Lambda_U \text{ and } T\Lambda_V \bs T_S/T\Lambda_U,
$$
where $S^3 = U\cup V$ is a Heegaard decomposition of the 3-sphere, $\partial
U= \partial V = S$, and $T\Lambda_U := T_S\cap \Lambda_U$. The sets of all
3-manifolds and homology 3-manifolds are obtained from these by a suitable
stabilization where the genus of $S$ goes to infinity.\footnote{The
stabilization is by taking the connected sum with the standard genus one
Heegaard decomposition of the 3-sphere, cf.\ \cite{craggs}. To construct the
stabilization maps, one needs to use the mapping class group $\G_{S,D}$
associated to a surface with one boundary component and the corresponding
handlebody subgroup $\Lambda_{S,D}$.} It is thus natural to consider the double
coset spaces
$$
\cL_U \bs \cG_S / \cL_U \text{ and } W_{-1}\cL_V \bs \U_S / W_{-1}\cL_U,
$$
where $\cL_U$ denotes the Zariski closure of $\Lambda_U$ in $\cG_S$, as well as
their stabilizations as $g(S) \to \infty$. There are natural mappings
$$
\Lambda_U \bs \G_S /\Lambda_U \to \cL_U \bs \cG_S / \cL_U
$$
and
$$
T\Lambda_V \bs T_S/T\Lambda_U \to W_{-1}\cL_V \bs \U_S / W_{-1}\cL_U.
$$
and their stabilizations as $g(S) \to \infty$. Functions on $\cL_U \bs \cG_S /
\cL_U$ should yield finite type invariants of Heegaard decompositions and
functions on the stabilization should yield finite type invariants of
3-manifolds --- and similarly for homology 3-spheres. However, there is one
major difficulty in carrying out this program. One needs to take the quotients
$$
\cL_U \bs \cG_S / \cL_U
$$
using geometric invariant theory (GIT). But since the group $\cL_U$ is not
reductive, the GIT problems are more difficult and less likely to be well
behaved. (Cf. \cite{git}.)  Amassa Fauntleroy and I are attempting to use the
strictness properties of $M^U_\dot$ to construct and study the GIT quotients
above.

The reader should be aware that, in an attempt to make this material more
accessible to non-experts, the Hodge and Galois theoretic aspects of the theory
have been suppressed. This choice comes at the expense of giving the basic
properties of relative weight filtrations a false aura of mystery. Readers
wanting more background should consult the papers of Deligne
\cite{deligne:weil2}, Steenbrink-Zucker \cite{steenbrink-zucker} and Kashiwara
\cite{kashiwara}.

\begin{convention}
\label{convention}
Throughout the paper the default coefficient group in all homology and
cohomology groups is $\Q$, the rational numbers. So, for example, $H_\dot(S)$
denotes the rational homology of $S$ and $H^\dot(\G)$ denotes the rational
cohomology of the group $\G$. All other coefficients will be made explicit.
\end{convention}

\bigskip

\noindent{\em Acknowledgments:} First and foremost, it is a great pleasure to
thank Shigeyuki Morita for his decades of inspiring work. His work complements
my own and has had significant impact upon it. I would also like to thank my
collaborators Gregory Pearlstein, Tomohide Terasoma and especially Makoto
Matsumoto for joint work which makes the current work possible. I would also
like to thank Alan Hatcher for communicating a proof of the folk result,
Proposition~\ref{prop:folk} and Amassa Fauntleroy for discussions of geometric
quotients by non-reductive groups. Finally, I thank the anonymous referee for
his very careful and reading of the manuscript and for his suggested
improvements to Proposition~\ref{prop:fte_ext}.

\section{Filtrations}

Since filtrations play a central role in this paper, it is wise to first lay out
the general conventions used in this paper. Deligne's conventions on filtrations
\cite{deligne:hodge2} are used systematically as they work well and as they are
used in Hodge theory and the study of Galois actions.

Suppose that $V$ is a vector space over a field $F$ of characteristic zero. An
increasing filtration $G_\dot$ of $V$ is a sequence of subspaces
$$
\cdots \subseteq F_{m-1} V \subseteq F_m V \subseteq F_{m+1} V \subseteq \cdots
$$
where $m\in \Z$. When $V$ is finite dimensional, we require that the
intersection of the $F_m V$ be trivial and that their union be all of $V$. The
infinite dimensional case is more subtle and is discussed  below.

The $m$th graded quotient $F_mV/F_{m-1}V$ of $F_\dot$ will be denoted by
$\Gr^F_m V$. The associated graded vector space will be denoted by $\Gr^F_\dot
V$.

Decreasing filtrations of $V$ will be denoted with an upper index:
$$
\cdots \supseteq F^{m-1} V \supseteq F^m V \supseteq F^{m+1} V \supseteq \cdots
$$
The $m$th graded quotient $F^mV/F^{m+1}V$ will be denoted by $\Gr_F^\dot V$. We
require that the intersection of the $F^m V$ be trivial and that their union be
all of $V$.

An increasing filtration $F_\dot$ of $V$ can be regarded as a decreasing
filtration by ``raising indices'': $F^mV := F_{-m}V$. For this reason, we will
discuss the remaining properties only for increasing filtrations.

If $(V,F_\dot)$ and $(W,F_\dot)$ are filtered vector spaces, then $V\otimes W$
and $\Hom(V,W)$ inherit natural filtrations:
$$
F_m (V\otimes W) := \sum_{j+k=m} F_jV\otimes F_k W
$$
and
$$
F_m\Hom(V,W) :=
\{\phi : V \to W : \phi(F_k V)\subseteq F_{m+k}W\text{ for all }k\in \Z\}.
$$
In particular, the dual $V^\ast$ of $V$ has a natural filtration
$$
F_m V^\ast = \{\phi \in V^\ast : \phi(F_{-m-1}V) = 0\}.
$$
With these definitions, there are natural isomorphisms
\begin{align*}
\Gr^F_m (V\otimes W) &\cong \bigoplus_{j+k=m}\Gr^F_jV\otimes \Gr^F_kV \cr
\Gr^F_m\Hom(V,W) &\cong \bigoplus_k \Hom(\Gr^F_k V,\Gr^F_{m+k}W)\cr
\Gr^F_m \big(V^\ast\big) &\cong (\Gr^F_{-m}V)^\ast.
\end{align*}

A filtration $F_\dot$ of $V$ naturally induces one on every subspace $W$ and
every quotient $p : V \to V/U$ by
$$
F_m W := W\cap F_mV \text{ and } F_m (V/U) := p(F_mV).
$$
There are thus two ways of inducing a filtration on a subquotient $q : W \to
p(W)$. One way is to restrict the quotient filtration on $V/U$ to the subspace
$p(W)$; the other is to give $p(W)$ the image of the filtration induced by
$F_\dot$ on $W$. These are easily seen to agree. (Cf.\ \cite{deligne:hodge2}).

In particular, if a vector space $V$ has two filtrations $F_\dot$ and $G_\dot$,
then the filtration $F_\dot$ induces a natural filtration (also denoted by
$F_\dot$) on each $G_\dot$-graded quotient $\Gr^G_m V$ of $V$. There are then
natural isomorphisms
$$
\Gr^F_m \Gr^G_n V \cong \Gr^G_n \Gr^F_m V.
$$

\subsection{The infinite dimensional case}
\label{sec:inf_dim}

Unless otherwise noted, all infinite dimensional vector spaces considered will
be either ind- or pro- objects of the category $\Vec_F^\fin$ of finite
dimensional $F$-vector spaces. The dual of a pro-object of $\Vec_F^\fin$ is an
ind-object of $\Vec_F^\fin$, and vice-versa. If $V$ is an ind-object of
$\Vec_F^\fin$, then it is naturally isomorphic to its double dual. Similarly, an
ind-object of $\Vec_F^\fin$ is naturally isomorphic to its double dual. A
filtration of a pro-object (resp.\ ind-object) of $\Vec_F^\fin$ is simply a
filtration of it by pro-objects (resp.\ ind-objects) of $\Vec_F^\fin$.

\section{Relative Completion of Discrete Groups}

Here we summarize the theory of relative unipotent completion of discrete
groups. Some of the statements are stronger than results in the literature. Full
proofs will appear in \cite{hain-matsumoto:notes}. Versions of many of these
results for the related notion of weighted completion can be found in
\cite{hain-matsumoto:deligne}.

\subsection{Unipotent and Prounipotent Groups}

Suppose that $F$ is a field of characteristic zero. Recall that a unipotent
algebraic group over $F$ is a subgroup $U$, for some $n$, of the group of the
$n\times n$ unipotent upper triangular matrices
$$
\{X \in \GL_n(F) : X-I \text{ is strictly upper triangular}\}
$$
that is defined by polynomial equations. The Baker-Campbell-Hausdorff formula
implies that
$$
\u = \{\log u \in \gl_n(F): u \in U\}
$$
is a Lie algebra with bracket $[x,y] = xy-yx$. The exponential mapping $\exp :
\u \to U$ is a polynomial bijection. The algebraic subgroups of $U$ correspond
bijectively to the Lie subalgebras of $\u$ via the exponential mapping.

A {\em pronilpotent} Lie algebra is, by definition, the inverse limit of
finite dimensional nilpotent Lie algebras:
$$
\u = \varprojlim_{\alpha} \u_\alpha
$$
It has a natural topology; a base of neighbourhoods of 0 consists of the
kernels of the projections of $\u$ to each of the $\u_\alpha$.

A {\em prounipotent group} $\U$ is the inverse limit
$$
\U = \varprojlim_{\alpha} U_\alpha
$$
of an inverse system of unipotent groups. The Lie algebra $\u$ of $\U$ is the
inverse limit of the Lie algebras of the $U_\alpha$. It is a pronilpotent Lie
algebra  The exponential mapping $\exp : \U \to \u$ is an isomorphism of
proalgebraic varieties.

Pronilpotent Lie algebras have nice presentations. Suppose that $\u$ is a
pronilpotent Lie algebra. Define $H_1(\u)$ to be the abelianization of $\u$. It
is a topological vector space --- if $\u$ is the inverse limit of the finite
dimensional nilpotent Lie algebras $\u_\alpha$, then $H_1(\u)$ is the inverse
limit of the $H_1(\u_\alpha)$. A continuous section $s : H_1(\u) \to \u$ of the
natural projection $\u \to H_1(\u)$ induces a continuous Lie algebra
homomorphism
$$
s_\ast : \L(H_1(\u))\comp \to \u
$$
from the free completed Lie algebra generated by $H_1(\u)$ to $\u$. This induces
an isomorphism on abelianizations. Since $\u$ is pronilpotent, $s_\ast$ is
surjective. It follows that $\u$ has a presentation of the form
$$
\u \cong \L(H_1(\u))\comp/\r
$$
where $\r = \ker s_\ast$ is a closed ideal contained in the commutator
subalgebra of $\L(H_1(\u))\comp$. Such presentations are said to be {\em
minimal}.

Define the continuous cohomology $H^\dot(\u)$ of a pronilpotent Lie algebra $\u$
that is the inverse limit of the finite dimensional nilpotent Lie algebras
$\u_\alpha$
by
$$
H^\dot(\u) := \varinjlim_{\alpha} H^\dot(\u_\alpha).
$$
This will be regarded an ind-object of the category of finite dimensional
$F$-vector spaces. It is easy to check that for all $k\ge 0$,
$$
H^k(\u) = \Homcts(H_k(\u),F) \text{ and }H_k(\u) = \Hom(H^k(\u),F).
$$
This generalizes to pronilpotent coefficients (i.e., projective systems of
nilpotent coefficients): If $V = \varprojlim V_\alpha$ where $V_\alpha$ is a
nilpotent $\u_\alpha$-module,
then
$$
H_k(\u,V) := \varprojlim_\alpha H_k(\u_\alpha,V_\alpha)
\text{ and }
H^k(\u,V^\ast) := \varinjlim_\alpha H^k(\u_\alpha,V_\alpha^\ast).
$$
There are natural isomorphisms
$$
H^k(\u,V^\ast) = \Homcts(H_k(\u,V),F) \text{ and }
H_k(\u,V) = \Hom(H^k(\u,V^\ast),F)
$$
If $\f$ is a free pronilpotent Lie algebra, then $H^k(\f;V) = 0$ for all $k>1$
and all nilpotent $\f$-modules $V$.

A complete proof of the following analogue of Hopf's Theorem will appear in
\cite{hain-matsumoto:notes}.

\begin{proposition}
If $\r$ is a closed ideal in the free pronilpotent Lie algebra $\f$ that is
contained in $[\f,\f]$, then there is a natural isomorphism
$$
H^2(\f/\r) \cong \Homcts(\r/[\r,\f],F)
$$
of ind-vector spaces. Moreover, if $\theta:\r/[\r,\f] \to \r$ is a continuous
section of the quotient mapping, then $\r$ is generated as a closed ideal by
$\im \theta$.
\end{proposition}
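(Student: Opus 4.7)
The plan is to derive the isomorphism from the five-term exact sequence of (continuous) Lie algebra cohomology attached to the extension
$$
0 \to \r \to \f \to \f/\r \to 0,
$$
and to establish the generation statement by a Nakayama-type argument using that the lower central series of a pronilpotent Lie algebra converges to $0$.

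First I would invoke the continuous Hochschild--Serre five-term sequence
$$
0 \to H^1(\f/\r) \to H^1(\f) \to H^1(\r)^{\f/\r} \to H^2(\f/\r) \to H^2(\f).
$$
Since $\f$ is free pronilpotent, $H^2(\f) = 0$ (this is a consequence of the statement, made just before the proposition, that $H^k(\f;V) = 0$ for $k>1$). Since $\r \subseteq [\f,\f]$, the abelianizations of $\f$ and $\f/\r$ agree, so $H^1(\f/\r) \to H^1(\f)$ is an isomorphism. Hence the connecting map gives an isomorphism
$$
H^1(\r)^{\f/\r} \stackrel{\sim}{\to} H^2(\f/\r).
$$
Dualizing, an element of $H^1(\r)^{\f/\r} = \Homcts(\r/[\r,\r],F)^{\f/\r}$ is precisely a continuous functional on $\r/[\r,\r]$ that vanishes on the image of $[\f,\r]$, i.e.\ an element of $\Homcts(\r/[\r,\f],F)$. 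This gives the natural isomorphism
$$
H^2(\f/\r) \cong \Homcts(\r/[\r,\f],F).
$$

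For the moreover part, let $\r'$ denote the closed ideal of $\f$ generated by $\im \theta$; clearly $\r' \subseteq \r$. Because $\theta$ is a section of the quotient $\r \to \r/[\r,\f]$, the image of $\r'$ under that quotient is all of $\r/[\r,\f]$, so
$$
\r = \r' + [\r,\f],
$$
with closures implicit. Passing to the pronilpotent quotient $\bar\f := \f/\r'$ and writing $\bar\r := \r/\r'$, the equation becomes $\bar\r = [\bar\f,\bar\r]$ (closed). By induction $\bar\r$ is contained in every term of the closed lower central series of $\bar\f$, and since $\bar\f$ is pronilpotent, that intersection is $0$. Hence $\bar\r = 0$ and $\r = \r'$.

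The main obstacle is controlling the topology throughout: one must verify that the five-term sequence is exact on continuous cochains for an extension of pronilpotent Lie algebras, that $H^2$ of a free pronilpotent Lie algebra vanishes in the continuous sense, and that the Nakayama step really works at the level of \emph{closed} ideals (i.e.\ that $\r' + [\r,\f]$ equals $\r$ without any closure subtleties and that the countable descent through the lower central series is legitimate in the inverse-limit topology). Each of these reduces, by writing $\f = \varprojlim \f_\alpha$ and $\r$ as the inverse limit of its images, to the classical finite-dimensional facts, but the passage to the limit is where the care is required.
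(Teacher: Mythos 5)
Your argument for the isomorphism and the paper's both hinge on the Hochschild--Serre spectral sequence
$E_2^{s,t}=H^s(\f/\r,H^t(\r))\Rightarrow H^{s+t}(\f)$,
so at bottom the approach is the same. The difference in packaging is worth noting, though. The paper first invokes the fact that subalgebras of free pronilpotent Lie algebras are free (so $H^{>1}(\r)=0$), and then reads off $H^2(\f/\r)\cong E_2^{0,1}$ from the degeneration of the spectral sequence. You instead extract the five-term exact sequence, which requires nothing about $\r$ beyond $\r\subseteq[\f,\f]$; all you use is $H^2(\f)=0$ and the isomorphism $H^1(\f/\r)\cong H^1(\f)$. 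That is slightly more economical, since freeness of $\r$ never enters. Your identification of $H^1(\r)^{\f/\r}$ with $\Homcts(\r/[\r,\f],F)$ is the same computation the paper performs via $H_0(\f/\r,H_1(\r))\cong\r/[\r,\f]$. For the ``moreover'' clause, your closed-ideal Nakayama argument (reduce mod the ideal $\r'$ generated by $\im\theta$, observe $\bar\r=[\bar\f,\bar\r]$, descend through the lower central series) is correct, and you are honest about the one genuine care point --- that $\r'+[\r,\f]$ is in fact a closed subspace equal to $\r$, which holds here because everything is pro-finite-dimensional, so sums and images of closed subspaces remain closed. Worth knowing: the paper's printed proof establishes only the cohomological isomorphism and does not address the ``moreover'' statement at all, so your Nakayama step supplies a part of the argument that the published proof leaves to the reader.
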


\begin{proof}
The fact that every subalgebra of a free Lie algebra is free \cite{reutenauer},
implies that every subalgebra of a free pronilpotent Lie algebra is also a free
pronilpotent Lie algebra. Consequently, $\r$ is a free pronilpotent Lie algebra
and has vanishing cohomology in degrees $>1$. Using standard cochains, one can
show that there is a spectral sequence
$$
E_2^{s,t} = H^s(\f/\r,H^t(\r)) \Rightarrow H^{s+t}(\f).
$$
Since $\r \subseteq [\f,\f]$, $H^1(\f) = H^1(\f/\r)$. Since $H^1(\r) =
\Homcts(H_1(\r),F)$ and since
$$
H_0(\f/\r,H_1(\r)) = H_0(\f,H_1(\r)) \cong \r/[\r,\f],
$$
the vanishing of the higher cohomology of $\f$ and $\r$ imply (when plugged into
the spectral sequence) that
$$
H^2(\f/\r) = H^0(\f,H^1(\r)) = \Homcts(H_0(\f,H_1(\r)),F) = \Homcts(\f/[\r,\f],F).
$$
\end{proof}

An immediate corollary is an analogue of Stallings' result \cite{stallings}. A
detailed proof will appear in \cite{hain-matsumoto:notes}.

\begin{corollary}
\label{cor:isom}
A homomorphism $\phi : \u_1 \to \u_2$ of pronilpotent Lie algebras is an
isomorphism if and only if it induces an isomorphism $H^1(\u_2) \to H^1(\u_1)$
and a monomorphism $H^2(\u_2) \to H^2(\u_1)$ of ind-vector spaces.
\end{corollary}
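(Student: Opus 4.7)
The ``only if'' direction is immediate, since isomorphisms induce isomorphisms on cohomology. For the converse, my plan is to mimic the proof of Stallings' theorem for groups by lifting $\phi$ to a comparison of minimal presentations. Choose minimal presentations
$$
\u_i \cong \f_i/\r_i, \qquad \f_i := \L(H_1(\u_i))\comp, \quad \r_i \subseteq [\f_i,\f_i],
$$
for $i=1,2$. Using a continuous set-theoretic section of the quotient $\f_2 \twoheadrightarrow \u_2$ together with the freeness of $\f_1$, lift $\phi$ to a continuous Lie algebra homomorphism $\tilde\phi : \f_1 \to \f_2$, which necessarily carries $\r_1$ into $\r_2$. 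The hypothesis that $\phi^* : H^1(\u_2) \to H^1(\u_1)$ is an isomorphism, together with the identifications $H^1(\u_i) = \Homcts(H_1(\u_i),F)$, implies that $H_1(\phi) : H_1(\u_1) \to H_1(\u_2)$ is an isomorphism; this is precisely the map on abelianizations induced by $\tilde\phi$.

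I next claim that $\tilde\phi$ is itself an isomorphism of pronilpotent Lie algebras. It preserves the lower central series, and on the $n$-th graded piece becomes $\L_n(H_1(\phi))$, where $\L_n$ is the $n$-th homogeneous component of the free Lie algebra functor. Functoriality of $\L_n$ and the isomorphism $H_1(\phi)$ imply that each graded piece map is an isomorphism, and induction on $n$ together with the five lemma gives that $\tilde\phi$ descends to an isomorphism modulo each term of the lower central series. Passing to the inverse limit yields the claim. In particular $\tilde\phi$ is a homeomorphism of pronilpotent Lie algebras, so $\tilde\phi(\r_1)$ is a closed ideal of $\f_2$ contained in $\r_2$.

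It now suffices to show $\tilde\phi(\r_1) = \r_2$, for then $\phi$ is the isomorphism induced by $\tilde\phi$. By the preceding proposition, $H^2(\u_i) \cong \Homcts(\r_i/[\r_i,\f_i],F)$, and the map $\phi^* : H^2(\u_2) \to H^2(\u_1)$ is the continuous $F$-dual of the natural map $\bar\phi : \r_1/[\r_1,\f_1] \to \r_2/[\r_2,\f_2]$ induced by $\tilde\phi$. Under the duality between ind- and pro- finite dimensional $F$-vector spaces from Section~\ref{sec:inf_dim}, the hypothesis that $\phi^*$ is a monomorphism translates into $\bar\phi$ being an epimorphism. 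Choosing a continuous section of $\bar\phi$, composing with a continuous section of $\r_1 \to \r_1/[\r_1,\f_1]$ supplied by the preceding proposition, and applying $\tilde\phi$ produces a continuous section $\theta_2 : \r_2/[\r_2,\f_2] \to \r_2$ whose image lies in $\tilde\phi(\r_1)$. By the second assertion of the preceding proposition, $\r_2$ is generated as a closed ideal of $\f_2$ by $\im\theta_2 \subseteq \tilde\phi(\r_1)$; since $\tilde\phi(\r_1)$ is itself a closed ideal of $\f_2$, we conclude $\r_2 \subseteq \tilde\phi(\r_1)$, whence equality.

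The main technical point I anticipate is the careful translation between the monomorphism hypothesis on $H^2$ (an ind-vector space statement) and the epimorphism statement on the pro-finite-dimensional quotients $\r_i/[\r_i,\f_i]$, together with the existence of continuous sections at each stage and the verification that $\tilde\phi$ is a homeomorphism; all of these are controlled by consistent use of the inverse limit topology from Section~\ref{sec:inf_dim}.
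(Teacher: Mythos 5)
Your proof is correct and takes essentially the same route as the paper: reduce to minimal presentations, identify $H^1$ with the dual of the abelianization and $H^2$ with the dual of $\r/[\r,\f]$ via the preceding Hopf-type proposition, and use the ``generation as a closed ideal'' statement to close the argument. The one structural difference is cosmetic: the paper first uses the $H^1$ isomorphism to conclude that $\phi$ is surjective, and then presents both $\u_1$ and $\u_2$ as quotients of a single free pronilpotent Lie algebra $\f$, so that the question reduces to whether $\r_1 = \r_2$ inside $\f$; you instead lift $\phi$ to an explicit isomorphism $\tilde\phi:\f_1\to\f_2$ of two free algebras (via the lower central series argument) and then compare $\tilde\phi(\r_1)$ with $\r_2$. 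The two formulations are equivalent, yours being slightly longer but making the lifted map visible. One point worth keeping in mind when you write it up carefully: the step ``choose a continuous section of $\bar\phi$'' relies on the fact that surjections of pro-objects of $\Vec_F^\fin$ admit continuous linear sections (a Mittag--Leffler type statement), which is true in this setting but should be stated rather than assumed silently; the paper sidesteps this by a Nakayama-style argument that $\r_2 = \r_1 + [\r_2,\f]$ forces $\r_1 = \r_2$ for closed ideals of a pronilpotent algebra.
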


\begin{proof}[Sketch of Proof]
The only if assertion is trivially true. Suppose that $H^k(\u_2)\to H^k(\u_1)$
is an isomorphism when $k=1$ and a monomorphism when $k=2$. Since $\u_1$ and
$\u_2$ are pronilpotent, the isomorphism on $H^1$ implies that $\phi$ is a
quotient map in the category of pronilpotent Lie algebras. Choose a minimal
presentation $\u_1 = \f/\r_1$. Let $\r_2$ be the kernel of $\f \to \u_1 \to
\u_2$. Then $\u_2 = \f/\r_2$ and $\phi$ is an isomorphism if and only if the
inclusion $\phi : \r_1 \to \r_2$ is a quotient mapping. But this holds if and
only if
$$
H^2(\u_2) = \Homcts(\r_2/[\r_2,\f],F) \to \Homcts(\r_1/[\r_1,\f],F) = H^2(\u_1)
$$ 
is a monomorphism.
\end{proof}

\begin{corollary}
\label{cor:triv-free}
A pronilpotent Lie algebra $\u$ is trivial if and only if $H^1(\u) = 0$ and
free if and only if $H^2(\u) = 0$. $\Box$
\end{corollary}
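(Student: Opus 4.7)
My plan is to derive both equivalences as immediate applications of Corollary~\ref{cor:isom}, comparing $\u$ in each case to an appropriate ``reference'' pronilpotent Lie algebra: the zero algebra for the first statement and a free pronilpotent cover of $\u$ for the second.

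For the assertion that $H^1(\u)=0$ forces $\u=0$, the forward implication is trivial. For the converse, I would apply Corollary~\ref{cor:isom} to the zero homomorphism $\phi:\u \to 0$. Under the hypothesis $H^1(\u)=0$, the induced map $H^1(0)=0 \to H^1(\u)=0$ is trivially an isomorphism, and $H^2(0)=0 \to H^2(\u)$ is trivially a monomorphism, so Corollary~\ref{cor:isom} gives $\u \cong 0$.

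For the assertion that $H^2(\u)=0$ forces $\u$ to be free, the forward direction follows from the fact noted just before the Proposition: if $\u$ is free pronilpotent then $H^k(\u;V)=0$ for all $k>1$ and all nilpotent $\u$-modules $V$, and in particular $H^2(\u)=0$. For the converse, assume $H^2(\u)=0$ and choose a minimal presentation $\u \cong \f/\r$ with $\f = \L(H_1(\u))\comp$ the free pronilpotent Lie algebra on $H_1(\u)$ and $\r \subseteq [\f,\f]$; such a presentation was constructed in the discussion preceding the Proposition. I would then apply Corollary~\ref{cor:isom} to the quotient map $\pi:\f \to \u$. Minimality (together with $\r \subseteq [\f,\f]$) makes $\pi$ induce an isomorphism on $H_1$, hence also on $H^1$. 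On $H^2$, the induced map $H^2(\u) \to H^2(\f)=0$ has vanishing domain and is trivially injective. Corollary~\ref{cor:isom} then yields $\pi: \f \xrightarrow{\sim} \u$, so $\u$ is free.

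I expect no genuine obstacle, since Corollary~\ref{cor:isom} and the existence of a minimal presentation already do the heavy lifting. The only small point to verify is that $\pi$ really is an isomorphism on $H^1$, which is immediate because a minimal section $s:H_1(\u) \to \u$ lifts through $\f$ to produce the identity on $H_1$ after passing to $H_1(\f) = H_1(\u)$; alternatively one could bypass Corollary~\ref{cor:isom} altogether in the converse of (ii) by invoking the Proposition directly, since $H^2(\u) \cong \Homcts(\r/[\r,\f],F) = 0$ forces $\r/[\r,\f]=0$, whence the Proposition's generation statement gives $\r=0$.
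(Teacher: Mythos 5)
Your argument is correct, and it matches the intended reading of the paper: Corollary~\ref{cor:triv-free} is stated with only a $\Box$, so it is meant to be read off immediately from the Hopf-type Proposition and Corollary~\ref{cor:isom}. Both routes you offer for the freeness half are sound; the one you label the alternative (using the Hopf isomorphism $H^2(\u)\cong\Homcts(\r/[\r,\f],F)$ to conclude $\r/[\r,\f]=0$, then invoking the Proposition's generation clause to get $\r=0$) is the most direct and is almost certainly what the author has in mind, since it needs nothing beyond the Proposition itself, whereas your main route reaches the same conclusion via Stallings' criterion applied to $\pi:\f\to\u$. Your use of Corollary~\ref{cor:isom} for the triviality half is also valid, though slightly heavier than necessary: $H^1(\u)=\Homcts(H_1(\u),F)=0$ already forces $H_1(\u)=0$ by pro/ind duality, and a pronilpotent Lie algebra with vanishing abelianization is zero by the usual Nakayama-type argument (equivalently, its minimal presentation has $\f=\L(0)\comp=0$). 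In short, no gaps; your write-up just does a bit more work than the $\Box$ signals is required.
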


\subsection{Relative Unipotent Completion}

The data for relative completion are:
\begin{enumerate}

\item a discrete group $\G$;

\item a field $F$ of characteristic zero;

\item a reductive algebraic group $R$ over $F$, such as $\GL_n(F)$, $\SL_n(F)$
or
$\Sp_n(F)$;

\item a Zariski dense homomorphism $\rho : \G \to R$.\footnote{In this paper I
will not distinguish between an algebraic group $G$ over $F$ and its group of
$F$- rational points $G(F)$.}

\end{enumerate}

The completion of $\G$ with respect to $\rho$ consists of a proalgebraic group
(i.e., an inverse limit of algebraic groups) $\cG$ over $F$ that is an extension
\begin{equation}
\label{eqn:extn}
1 \to \U \to \cG \to R \to 1
\end{equation}
where $\U$ is prounipotent and a homomorphism $\rhohat : \G \to \cG$ whose
composition with $\cG \to R$ is $\rho$. It is characterized by the following
universal mapping property:

If $G$ is an affine (pro)algebraic group over $F$ that is an extension
$$
1 \to U \to G \to R \to 1
$$
of $R$ by a (pro)unipotent group $U$, and if $\rhotilde : \G \to G$ is a
homomorphism whose composition with $G \to R$ is $\rho$, then there is a unique
homomorphism $\phi : \cG \to G$ of (pro)algebraic $F$-groups such that
$$
\xymatrix{
\G \ar[r]^\rhohat \ar[d]_\rhotilde & \cG \ar[d]\ar[dl]_\phi \cr
G \ar[r] & R
}
$$
commutes.

The universal mapping property implies that the homomorphism $\rhohat : \G \to
\cG$ is Zariski dense --- that is, if $\cG'$ is a proalgebraic subgroup of $\cG$
defined over $F$ that contains the image of $\rhohat$, then $\cG'=\cG$. The
point being that the Zariski closure of $\im\rhohat$ in $\cG$ has the same
universal mapping property as $\cG$.

Suppose that $K$ is an extension field of $F$. Every (pro)algebraic group $G$
over $F$ gives rise to a (pro)algebraic group $G\otimes_F K$ over $K$ by
extension of scalars. The universal mapping property of the relative completion
$\cG_K$ of $\G$ over $K$ with respect to $\rho : \G \to R\otimes_F K$ implies
that $\G \to \cG\otimes_F K$ induces a homomorphism $\cG_K \to \cG\otimes_F K$.

\begin{theorem}[Hain-Matsumoto \cite{hain-matsumoto:notes}]
\label{thm:base-change}
The homomorphism $\cG_K \to \cG\otimes_F K$ is an isomorphism.
\end{theorem}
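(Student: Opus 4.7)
The plan is to show that the pair $(\cG\otimes_F K,\ \rhohat\otimes_F K)$ satisfies the universal property characterizing $\cG_K$; uniqueness of objects with a universal property then formally forces the canonical comparison map to be the desired isomorphism. The preliminary checks are easy: $\cG\otimes_F K$ is a $K$-proalgebraic extension of $R\otimes_F K$ by the prounipotent group $\U\otimes_F K$, and the image of $\G$ is Zariski dense in $\cG\otimes_F K$ because Zariski density is preserved under faithfully flat base change.

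The substantive content is in the universal property. Given a $K$-proalgebraic extension $1\to U\to G\to R\otimes_F K\to 1$ with $U$ prounipotent and a homomorphism $\rhotilde:\G\to G$ lifting $\rho$ base-changed to $K$, I need to produce a unique $K$-proalgebraic map $\phi:\cG\otimes_F K\to G$ factoring the data. The difficulty is that the universal property of $\cG$ is stated for $F$-targets, whereas $G$ is only defined over $K$. I would construct $\phi$ as an inverse limit over finite-dimensional quotients of $G$, reducing the problem to the following claim: every finite-dimensional $K$-representation $V$ of $\G$ that is an iterated extension of irreducible $R\otimes_F K$-representations factors through $\cG\otimes_F K$.

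To prove the claim, I would first spread out the finitely many structure constants of $V$ and its $R_K$-filtration to obtain a model over some \emph{finite} subextension $K_0/F$ of $K$, and then restrict scalars along $K_0/F$. This turns $V$ into a finite-dimensional $F$-vector space carrying a $\G$-action and a filtration whose graded pieces are irreducible $R\otimes_F K_0$-representations; each such piece is semisimple as an $R$-representation over $F$, since restriction of scalars of an irreducible representation of a reductive group is a direct sum of Galois conjugates. Hence $V$ over $F$ is an iterated extension of semisimple $R$-representations, and the universal property of $\cG$ supplies a factorization $\G\to\cG\to\GL_F(V)$; base-changing back to $K$ yields the required factorization through $\cG\otimes_F K$, and uniqueness of $\phi$ follows from Zariski density of $\G$. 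The main obstacle is a careful execution of the descent step --- arranging for the $\G$-action, the $R_K$-equivariant structure, and the $R_K$-filtration to descend coherently to a finite subextension --- and then verifying that the $F$-form so obtained genuinely satisfies the semisimplicity hypothesis that makes $\cG$'s universal property applicable.
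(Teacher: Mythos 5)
The paper does not actually contain a proof of Theorem~\ref{thm:base-change}: it is cited from \cite{hain-matsumoto:notes}, which is listed as ``in preparation.'' So there is no in-paper argument to compare against; I can only assess your argument on its own merits.

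Your overall strategy --- verify that $(\cG\otimes_F K,\,\rhohat\otimes_F K)$ itself satisfies the universal property, so that uniqueness forces the canonical comparison map to be an isomorphism --- is the natural one, and the preliminary checks are sound. In particular the Zariski-density claim is correct: if $S\subseteq X(F)$, then a function $\sum f_i\otimes c_i\in F[X]\otimes_F K$ (with the $c_i$ linearly independent over $F$) vanishes on $S$ exactly when each $f_i$ does, so the ideal of the closure base-changes correctly.

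The gap is in the descent step. When you ``spread out the finitely many structure constants of $V$ \dots to obtain a model over some \emph{finite} subextension $K_0/F$,'' two things are being assumed tacitly: (i) that $\G$ is finitely generated, so that the $\G$-action on $V$ is genuinely captured by finitely many matrix entries (true in all the paper's applications, but not part of the hypotheses as stated); and, more seriously, (ii) that the subextension of $K$ generated by those entries is \emph{finite} over $F$. Assumption (ii) fails whenever $K/F$ is not algebraic. Concretely, take $F=\Q$, $K=\Q(t)$, $\G=\Z$, $R$ trivial, and let the generator of $\Z$ act on a $K$-vector space by a unipotent matrix with entries involving $t$. This representation is perfectly admissible for the universal property of $\cG_K$, but it descends to no finite subextension of $\Q$; the minimal field of definition is $\Q(t)$ itself, and $\Res_{\Q(t)/\Q}$ does not yield a finite-dimensional $\Q$-representation. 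So the whole restriction-of-scalars reduction collapses outside the algebraic case. You would either need to restrict the theorem to algebraic extensions $K/F$, or replace this step --- for instance, by comparing the prounipotent radicals $\u_K\to\u\otimes_F K$ directly via Corollary~\ref{cor:isom} and Theorem~\ref{thm:presentation}, analyzing how irreducible $R\otimes_F K$-modules arise from irreducible $R$-modules under base change (the split case is immediate; the general case still reduces to finite extensions, but at the level of $R$-modules rather than of the $\G$-representation $V$, which avoids the transcendence problem).
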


In general we will work over the smallest field $F$ possible, which is the
smallest field over which both $R$ and $\rho$ are defined. In all principal
examples in this paper, the field will be $\Q$.

\subsection{Unipotent Completion}

When $R$ is the trivial group, relative completion reduces to classical
unipotent completion, which is also known as Malcev completion and which can be
computed by the methods of rational homotopy theory due to Quillen, Chen and
Sullivan. We shall denote the unipotent completion of $\G$ over $F$ by
$\G^\un_{/F}$. The default field will be $\Q$. We shall abbreviate
$\G^\un_{/\Q}$ to $\G^\un$. The unipotent completion of $\G$ over $F$ is
obtained from $\G^\un$ by extension of scalars:
$$
\G_{/F}^\un = \G^\un \otimes_\Q F.
$$

If $\G$ is a free group $F_n = \langle x_1,\dots,x_n \rangle$ on $n$-generators,
the Lie algebra of $F^\un_{n/F}$ is the completed free Lie algebra
$$
\f_n := \L(X_1,\dots,X_n)\comp
$$
which is the closure of the free Lie algebra $\L(X_1,\dots,X_n)$ in the
noncommutative power series ring $F\ll X_1,\dots,X_n\rr$. The prounipotent
group $F_{n/F}^\un$ is
$$
F_{n/F}^\un = \{\exp u \in F\ll X_1,\dots,X_n\rr : u \in \f_n\}.
$$
The natural homomorphism $\rhohat : F_n \to F_{n/F}^\un$ is defined by
$\rhohat(x_j) = \exp X_j$. This can be proved using universal mapping
properties. A theorem of Magnus \cite{magnus} implies that the homomorphism $F_n
\to F_n^\un$ is injective.

\subsection{Completions of $\Aut F_n$ and $\Out F_n$}
\label{sec:aut_out}

Denote the automorphism group of the free group $F_n$ by $\Aut F_n$ and its
quotient by inner automorphisms of $F_n$ by $\Out F_n$. There are natural
surjections
$$
\Aut F_n \to \GL_n(\Z) \text{ and } \Out F_n \to \GL_n(\Z).
$$
Denote their kernels by $\IA_n$ and $\IO_n$, respectively. Let $\Aut^+ F_n$ be
the index 2 subgroup of $\Aut F_n$ whose image in $\GL_n(\Z)$ is $\SL_n(\Z)$.
Let $\Out^+F_n$ be its quotient by the group of inner automorphisms.

Take $F=\Q$, $R = \SL_n(\Q)$ and let
$$
\rho : \Aut^+ F_n \to \SL_n(\Q)
$$
be the natural representation. This is Zariski dense. The completion of $\Aut^+
F_n$ with respect to $\rho$ is an extension
$$
1 \to \cIA_n \to \A_n \to \SL_n(\Q) \to 1.
$$
Similarly, we have the completion of $\Out^+F_n$. It is an extension
$$
1 \to \cIO_n \to \O_n \to \SL_n(\Q) \to 1.
$$
The corresponding sequences of Lie algebras
$$
0 \to \ia_n \to \a_n \to \sl_n \to 0
\text { and }
0 \to \io_n \to \o_n \to \sl_n \to 0
$$
are exact. There are natural homomorphisms
$$
\a_n \to \Der \f_n \text{ and } \o_n \to \OutDer \f_n,
$$
where $\OutDer \f_n$ denotes the Lie algebra of outer derivations of $\f_n$.

The natural homomorphisms $\IA_n \to \cIA_n$ and $\IO_n \to \cIO_n$ induce
homomorphisms $\IA_n^\un \to \cIA_n$ and $\IO_n^\un \to \cIO_n$. We shall see
later (cf.\ Cor.~\ref{cor:IA_IO_unipt}) that these homomorphisms are
isomorphisms when $n\ge 4$, surjective when $n=3$ and are far from surjective
when $n=2$.

\begin{proposition}
The natural homomorphism $\rhohat : \Aut^+ F_n \to \A_n$ is injective.
\end{proposition}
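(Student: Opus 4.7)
The plan is to exhibit a proalgebraic $\Q$-group $G$ fitting into an extension $1 \to U \to G \to \SL_n(\Q) \to 1$ with $U$ prounipotent, together with a homomorphism $\Phi : \Aut^+ F_n \to G$ lifting $\rho$ that is itself \emph{injective}. The universal mapping property of relative completion will then factor $\Phi$ as $\Aut^+ F_n \xrightarrow{\rhohat} \A_n \to G$, which forces $\rhohat$ to be injective as well.

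For $G$ I would take the preimage of $\SL_n(\Q)$ inside $\Aut(F^\un_{n/\Q})$. Via the exponential isomorphism $\f_n \cong F^\un_{n/\Q}$ of proalgebraic varieties, this is the group of continuous Lie algebra automorphisms of $\f_n$ whose induced action on $H_1(\f_n) = \Q^n$ has determinant $1$. Presenting $\f_n = \varprojlim_m \f_n/L^m\f_n$ along the lower central series exhibits $\Aut(\f_n)$ as the inverse limit of the algebraic groups $\Aut(\f_n/L^m\f_n)$, hence proalgebraic. Passing to Lie algebras, the kernel of the natural map $\Aut(\f_n) \to \GL_n(\Q)$ has Lie algebra $\{D \in \Der\f_n : D(H_1\f_n) = 0\}$, which is pronilpotent; so the kernel is prounipotent, as required by the universal property.

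To construct $\Phi$, I use functoriality of unipotent completion: each automorphism of $F_n$ extends uniquely to an automorphism of $F^\un_{n/\Q}$. The induced action on $H_1(F_n;\Q)$ has determinant $1$ since $\Aut^+ F_n$ maps to $\SL_n(\Z)$, so the image lies in $G$, and the composition $\Aut^+ F_n \to G \to \SL_n(\Q)$ recovers $\rho$. For injectivity, suppose $\phi \in \Aut^+ F_n$ satisfies $\Phi(\phi) = 1$; then $\phi$ acts as the identity on $F^\un_{n/\Q}$, and in particular fixes the image of each generator $x_j \in F_n$ under the natural map $F_n \to F^\un_n$. By Magnus's theorem (cited above), this natural map is injective, so $\phi(x_j) = x_j$ in $F_n$ for every $j$ and hence $\phi = \id$.

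The only genuinely geometric input is the Magnus embedding $F_n \hookrightarrow F^\un_n$; everything else is formal. I expect the main obstacle to be bookkeeping rather than mathematics: one must carefully verify that $\Aut(\f_n)$ really does carry the structure of a proalgebraic $\Q$-group (as an inverse limit along the lower central series of $\f_n$) whose kernel over $\GL_n$ is prounipotent, so that the universal mapping property of $\A_n$ genuinely applies. Once that setup is in place, the factorization through $\A_n$ and the injectivity conclusion are immediate.
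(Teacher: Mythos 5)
Your proof is correct and is essentially the paper's argument: both rely on Magnus's theorem that $F_n \hookrightarrow F_n^\un$, deduce from it that $\Aut^+ F_n \to \Aut F_n^\un$ is injective, exhibit a suitable extension of $\SL_n(\Q)$ by a prounipotent group inside $\Aut F_n^\un$, and invoke the universal property of relative completion to factor this injection through $\A_n$. The only cosmetic difference is that the paper takes the Zariski closure of the image of $\Aut^+ F_n$ (which is automatically proalgebraic) where you take the full preimage of $\SL_n(\Q)$ in $\Aut(\f_n)$ and then verify it is proalgebraic with prounipotent kernel; both choices serve the same role.
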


\begin{proof}
Since the unipotent completion $F_n \to F_n^\un$ is injective, it follows that
the natural representation $\theta : \Aut F_n \to \Aut F_n^\un$ is injective.
The Zariski closure of the image of $\Aut^+ F_n$ under $\theta$ is easily seen
to be an extension of $\SL_n(\Q)$ by a prounipotent group. The universal mapping
property of relative completion induces a homomorphism $\psi : \A_n \to \Aut
F_n^\un$ such that the diagram
$$
\xymatrix{
\Aut^+ F_n \ar[r]^(0.6)\rhohat\ar[dr]_\theta & \A_n \ar[d]^\psi \cr
& \Aut F_n^\un
}
$$
The injectivity of $\rhohat$ follows from the injectivity of $\theta$.
\end{proof}

The injectivity of $\Out^+ F_n \to \Out F_n^\un$ would follow if one could prove
that $F_n^\un \cap \Aut F_n = F_n$ in  $\Aut F_n^\un$, where $F_n$ and $F_n^\un$
are regarded as subgroups of $\Aut F_n^\un$ via the inner action. This is not
clear.

\subsection{Properties of Relative Completion}

Here we list some of the basic properties of relative completion that we shall
need. The proofs of these are sprinkled throughout the literature and are
sometimes proved for the related notion of {\em weighted completion}
\cite{hain-matsumoto:deligne}. The notes \cite{hain-matsumoto:notes} will give
an efficient and uniform presentation of the theory relative and related
completions of discrete and profinite groups.

\begin{proposition}[Naturality]
\label{prop:naturality}
Suppose that $\rho_j : \G_j \to R_j$, $j=1,2$ are Zariski dense homomorphisms
from discrete groups to reductive groups over $F$. Let $\G_j \to \cG_j$ be the
completion of $\G_j$ with respect to $\rho_j$ over $F$. If the diagram
$$
\xymatrix{
\G_1 \ar[r]^{\rho_1}\ar[d]_{\phi_\G} & R_1 \ar[d]^{\phi_R} \cr
\G_2 \ar[r]^{\rho_2} & R_2
}
$$
commutes where $\phi_R$ is a homomorphism of algebraic groups, then there is
a unique homomorphism $\phi_\cG : \cG_1 \to \cG_2$ such that the diagram
$$
\xymatrix{
\G_1 \ar[r]^{\rhohat_1}\ar[d]_{\phi_\G} & \cG_1 \ar[d]^{\phi_\cG} \ar[r]
& R_1 \ar[d]^{\phi_R} \cr
\G_2 \ar[r]^{\rhohat_2} & \cG_2 \ar[r] & R_2
}
$$
\end{proposition}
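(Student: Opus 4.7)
The plan is to invoke the universal mapping property of $\cG_1$ with a carefully chosen test extension. The obstacle is that $\cG_2$ is an extension of $R_2$, not of $R_1$, so the universal property of $\cG_1$ does not apply to $\cG_2$ directly. The fix is to form the fibre product
$$
G := \cG_2 \times_{R_2} R_1,
$$
using the algebraic group homomorphism $\phi_R : R_1 \to R_2$ and the structure map $\cG_2 \to R_2$. Since the kernel of $\cG_2 \to R_2$ is preserved under pullback, $G$ fits into a short exact sequence
$$
1 \to \U_2 \to G \to R_1 \to 1,
$$
where $\U_2$ is the prounipotent radical of $\cG_2$. Thus $G$ is a proalgebraic extension of $R_1$ by a prounipotent group, which is exactly the class of test objects to which the universal property of $\cG_1$ applies.

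Next I would produce a natural lift of $\rho_1$ through $G$. Sending $\gamma \in \G_1$ to the pair $(\rhohat_2(\phi_\G(\gamma)),\, \rho_1(\gamma))$ defines a homomorphism $\G_1 \to G$; the commutativity of the given square, together with the fact that $\rhohat_2$ lies over $\rho_2$, ensures that this pair actually lies in the fibre product. Its composition with the projection $G \to R_1$ is $\rho_1$, so by the universal property there is a unique homomorphism $\psi : \cG_1 \to G$ of proalgebraic groups extending it. Composing $\psi$ with the projection $G \to \cG_2$ then defines $\phi_\cG : \cG_1 \to \cG_2$, and unwinding the construction shows that the upper triangles $\rhohat_2 \circ \phi_\G = \phi_\cG \circ \rhohat_1$ and the lower triangle over $R_2$ both commute.

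For uniqueness, I would use the Zariski density of $\rhohat_1 : \G_1 \to \cG_1$, which is part of the universal mapping property. Any second homomorphism $\phi'_\cG : \cG_1 \to \cG_2$ making the diagram commute, together with the structure map $\cG_1 \to R_1$, would produce a second lift $\psi' : \cG_1 \to G$; but $\psi$ and $\psi'$ agree on the Zariski dense image of $\G_1$ in $\cG_1$, hence coincide, and therefore $\phi_\cG = \phi'_\cG$. The only delicate point is the bookkeeping needed to confirm that $G$ is a legitimate proalgebraic extension of $R_1$ by a prounipotent group (realized as an inverse limit of ordinary algebraic fibre products), so that the universal property is really applicable; once that is checked, both existence and uniqueness follow almost mechanically.
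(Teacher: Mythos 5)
The paper states Proposition~\ref{prop:naturality} without proof, deferring to the literature and to \cite{hain-matsumoto:notes}. Your fibre-product argument is correct and is the standard way to establish such functoriality: pulling $\cG_2 \to R_2$ back along $\phi_R$ produces a prounipotent extension of $R_1$ to which the universal mapping property of $\cG_1$ applies, and the rest (landing in the fibre product, projecting back to $\cG_2$, uniqueness via the uniqueness clause of the universal property or via Zariski density of $\rhohat_1$) follows by routine diagram chasing. The one detail you flag --- that $G = \cG_2\times_{R_2}R_1$ is indeed an affine proalgebraic extension of $R_1$ by a prounipotent group --- is easily checked by writing $\cG_2 = \varprojlim_\alpha \cG_2^\alpha$ and observing that $G = \varprojlim_\alpha (\cG_2^\alpha \times_{R_2} R_1)$, each term being an algebraic extension of $R_1$ by the unipotent radical of $\cG_2^\alpha$; so the argument is complete.
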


Completions are, in general, right exact. Here we state a useful special case.

\begin{proposition}[Right exactness]
\label{prop:rightexact}
Suppose that $\rho : \G \to R$ is a Zariski dense homomorphism from a discrete
group to a reductive $F$-group. Denote the completion $\G$ with respect to
$\rho$ by $\cG$ and the completion of $\im \rho$ with respect to the inclusion
$\im\rho  \hookrightarrow R$ by $\cR$. Then the sequence
$$
\big(\ker\rho\big)^\un_{/F} \to \cG \to \cR \to 1
$$
is exact.
\end{proposition}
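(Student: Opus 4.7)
My plan is to produce $\phi:\cG\to\cR$ from Proposition~\ref{prop:naturality}, verify surjectivity by a Zariski density argument, and then identify $\ker\phi$ with the image of $(\ker\rho)^\un_{/F}$ by combining the universal property of unipotent completion with the universal property of $\cR$ applied to a suitable quotient of $\cG$.

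First I would factor $\rho$ as $\G\twoheadrightarrow\im\rho\hookrightarrow R$ and apply Proposition~\ref{prop:naturality} to obtain a morphism $\phi:\cG\to\cR$ of proalgebraic $F$-groups lifting $\id_R$. By construction $\phi\circ\rhohat=\rhohat_{\cR}\circ(\G\to\im\rho)$, so $\phi(\cG)$ contains the Zariski dense subgroup $\rhohat_{\cR}(\im\rho)$ of $\cR$; since morphisms of proalgebraic groups have Zariski closed image (as inverse limits of closed images in algebraic quotients), $\phi$ is surjective. Set $K=\ker\phi$. Because $\phi$ lifts $\id_R$, $K$ lies in the prounipotent kernel $\U$ of $\cG\to R$ and is therefore prounipotent. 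The restriction of $\rhohat$ to $\ker\rho$ lands in $K$, so the universal property of unipotent completion produces a unique morphism
$$
\psi:(\ker\rho)^\un_{/F}\to K
$$
of proalgebraic $F$-groups extending $\rhohat|_{\ker\rho}$.

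The main obstacle is showing $\psi$ is surjective. Let $K'=\im\psi$, which is closed in $\cG$ because a morphism of prounipotent groups has closed image. By naturality, the conjugation action of $\G$ on $\ker\rho$ lifts to an action on $(\ker\rho)^\un_{/F}$, and $\psi$ intertwines this with conjugation by $\rhohat(\G)$ on $\cG$; thus $K'$ is $\rhohat(\G)$-stable, and Zariski density of $\rhohat(\G)$ in $\cG$ promotes this to normality in $\cG$. Consequently $\cG/K'$ is a proalgebraic extension of $R$ by the prounipotent group $\U/K'$. The composite $\G\to\cG/K'$ kills $\ker\rho$ (whose image lies in $K'$), hence factors through a Zariski dense homomorphism $\im\rho\to\cG/K'$ lifting the inclusion $\im\rho\hookrightarrow R$. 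The universal property of $\cR$ then provides $\cR\to\cG/K'$ inverse to the natural quotient $\cG/K'\to\cR$: both round-trip compositions are the identity because they agree with the identity on the Zariski dense image of $\im\rho$. Hence $K=K'=\im\psi$, giving exactness of
$$
(\ker\rho)^\un_{/F}\to\cG\to\cR\to 1.
$$
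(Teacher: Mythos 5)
Your proof is correct, and it is complete. The paper itself does not include a proof of Proposition~\ref{prop:rightexact}; it states that "the proofs of these are sprinkled throughout the literature" and defers to \cite{hain-matsumoto:deligne} and the forthcoming notes \cite{hain-matsumoto:notes}. So there is no in-text argument to compare against, and I will instead assess yours directly.

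The logical skeleton you use is the standard one for right exactness of Malcev-type completions: construct $\phi:\cG\to\cR$ by naturality, show it is surjective from Zariski density, take $K'=\im\psi$ with $\psi:(\ker\rho)^\un_{/F}\to\ker\phi$ the map from the universal property of unipotent completion, prove $K'$ is closed and normal, and then check via the universal property that $\cG/K'\cong\cR$, forcing $K'=\ker\phi$. Every step is justified. The only place a careless reader might stumble is your intertwining claim that $\psi\circ(c_g)^\un=\Ad(\rhohat(g))\circ\psi$ for $g\in\G$; this holds because the two maps $(\ker\rho)^\un_{/F}\to\cG$ agree on the Zariski dense image of $\ker\rho$ and the target is separated. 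Similarly, in the final step it is worth spelling out that $\theta\circ q=\id_{\cG/K'}$ is checked on the Zariski dense image of $\G$ (not just of $\im\rho$, though the two images coincide in $\cG/K'$), while $q\circ\theta=\id_{\cR}$ follows from the uniqueness clause in the universal property of $\cR$. These are the sorts of details the Hain--Matsumoto notes would presumably record, but your argument already contains them implicitly and is correct as written.
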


A generalization of Levi's Theorem  implies that, when $\G$ is finitely
generated, the extension (\ref{eqn:extn}) is split, and that any two splitting
are conjugate by an element of $\U$. It follows that $\u$ is a Lie algebra in
the category of pro-representations\footnote{That is, an inverse limit of finite
dimensional $R$-modules. Since $R$ is reductive, the pro-representations of $R$
are direct products of finite dimensional $R$-modules.} of $R$ and that there is
an isomorphism
$$
\cG \cong R \ltimes \exp \u
$$
that is unique up to conjugation by an element of $\exp\u$.

Relative completions are manageable and somewhat computable as they are quite
tightly controlled by cohomology.

Suppose that $\Fbar$ is an algebraic closure of $F$. An irreducible
representation $V$ of $R$ is {\em absolutely irreducible} if $V\otimes_F\Fbar$
is an irreducible representation of $R\otimes_F \Fbar$.

\begin{theorem}
\label{thm:presentation}
For all finite dimensional $R$-modules $V$, there is a homomorphism
$$
\Hom_R(H_k(\u),V) \cong \big(H^k(\u)\otimes V\big)^R \to H^k(\G;V)
$$
that is natural with respect to the maps described in
Proposition~\ref{prop:naturality} It is an isomorphism when $k=1$ and injective
when $k=2$. If every irreducible finite dimensional representation of $R$ is
absolutely irreducible, then there is a natural $R$-module isomorphism
$$
H^1(\u) \cong \bigoplus_\alpha H^1(\G;V_\alpha)\otimes V_\alpha^\ast
$$
and a natural $R$-module injection
$$
H^2(\u) \hookrightarrow \bigoplus_\alpha H^2(\G;V_\alpha)\otimes V_\alpha^\ast
$$
where $\{V_\alpha\}$ is a set of representatives of the isomorphism classes of
irreducible finite dimensional $R$-modules and where each $H^1(\G;V_\alpha)$
is regarded as a trivial $R$-module.
\end{theorem}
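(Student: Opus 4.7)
The plan is to factor the desired map through the continuous cohomology $H^k_{\cts}(\cG; V)$ and exploit the Hochschild--Serre spectral sequence for the extension $1 \to \U \to \cG \to R \to 1$. Since $R$ is reductive and acts rationally on $H^q(\u) \otimes V$, all positive-degree group cohomology $H^p(R; -)$ vanishes on such modules, so the spectral sequence collapses to a natural isomorphism
$$
H^k_{\cts}(\cG; V) \cong \bigl(H^k(\u) \otimes V\bigr)^R \cong \Hom_R(H_k(\u), V).
$$
The Zariski-dense homomorphism $\rhohat : \G \to \cG$ induces a pullback $\rhohat^{\ast} : H^k_{\cts}(\cG; V) \to H^k(\G; V)$ whose composition with this isomorphism is the asserted map; naturality with respect to Proposition~\ref{prop:naturality} is then automatic from functoriality of both sides. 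For $k = 1$, a $1$-cocycle $c \in Z^1(\G; V)$ is equivalent to a lifting $\sigma_c : \G \to R \ltimes V$ of $\rho$, with cohomologous cocycles corresponding to $V$-conjugate liftings. Since $R \ltimes V$ is an extension of $R$ by the unipotent group $V$, the universal mapping property of $\cG$ produces a unique algebraic lifting $\tilde{\sigma}_c : \cG \to R \ltimes V$ of $\cG \to R$ extending $\sigma_c$; this inverts $\rhohat^{\ast}$ at the cocycle level, and it respects coboundaries because two algebraic liftings that are $V$-conjugate on the Zariski-dense subgroup $\rhohat(\G)$ remain $V$-conjugate on all of $\cG$.

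For injectivity at $k = 2$, a class $c \in H^2_{\cts}(\cG; V)$ corresponds to an extension $1 \to V \to \cE \xrightarrow{\pi} \cG \to 1$ of proalgebraic groups, in which $\cE$ is itself an extension of $R$ by a prounipotent group (namely $\pi^{-1}(\U)$), so the universal property of $\cG$ applies to $\cE$. If $\rhohat^{\ast}(c) = 0$ then the pulled-back extension of $\G$ splits, producing $\tilde{\rhohat} : \G \to \cE$ with $\pi \circ \tilde{\rhohat} = \rhohat$; the universal property then yields a unique $\Phi : \cG \to \cE$ with $\Phi \circ \rhohat = \tilde{\rhohat}$, and the uniqueness clause applied to the two algebraic lifts $\pi \circ \Phi$ and $\id_\cG$ of $\rhohat$ forces $\pi \circ \Phi = \id_\cG$. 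Thus $\Phi$ splits $\cE$ and $c = 0$.

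For the isotypic refinement, semisimplicity of rational $R$-modules gives $H^k(\u) \cong \bigoplus_\alpha \Hom_R(V_\alpha, H^k(\u)) \otimes V_\alpha$, and absolute irreducibility (which forces $\End_R(V_\alpha) = F$) identifies the multiplicity space with $\bigl(H^k(\u) \otimes V_\alpha^{\ast}\bigr)^R$. Substituting the $k = 1$ isomorphism and $k = 2$ injection with coefficient module $V_\alpha^{\ast}$, then reindexing over the duality involution on the set of isomorphism classes of irreducible $R$-modules, yields the stated decomposition of $H^1(\u)$ and injection for $H^2(\u)$. The main obstacle will be setting up the Hochschild--Serre argument carefully in the proalgebraic/continuous setting: one must define $H^k_{\cts}(\cG; V)$ as a limit over finite-dimensional algebraic quotients of $\cG$, verify uniform vanishing of $H^{>0}(R; -)$ through this limit, and confirm that the resulting comparison map $H^k_{\cts}(\cG; V) \to H^k(\G; V)$ is the one furnished by Zariski density of $\rhohat$. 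Once these foundations are in place, the remainder is a formal manipulation of the universal mapping property as sketched above.
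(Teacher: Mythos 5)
The paper supplies no proof of this theorem, deferring it explicitly to the literature and to the forthcoming notes \cite{hain-matsumoto:notes}, so there is no internal argument to compare against; your strategy, however, is the standard one used in Hain's earlier work and in \cite{hain-matsumoto:deligne}. The three components are all sound: the Hochschild--Serre collapse $H^k_{\cts}(\cG;V)\cong (H^k(\u)\otimes V)^R$ (valid because $R$ reductive in characteristic zero has vanishing higher rational cohomology), the $k=1$ isomorphism via unique algebraic extension of cocycle liftings $\G\to R\ltimes V$ to $\cG$, and the $k=2$ injectivity via splitting the extension $1\to V\to\cE\to\cG\to 1$ using the universal mapping property twice (once to produce $\Phi:\cG\to\cE$, once to force $\pi\circ\Phi=\id_\cG$). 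One small sharpening: your claim that $V$-conjugate liftings over $\G$ stay $V$-conjugate over $\cG$ is cleanest not as a Zariski-density statement but as a direct consequence of the uniqueness clause --- since $v\tilde\sigma_c v^{-1}$ is an algebraic lift of $\cG\to R$ extending $\sigma_{c'}$, it must equal $\tilde\sigma_{c'}$. The isotypic refinement via semisimplicity and $\End_R(V_\alpha)=F$ is correct, and the duality reindexing $\alpha\mapsto\alpha^*$ is exactly what reconciles the natural formula $\bigoplus_\alpha\Hom_R(V_\alpha,H^k(\u))\otimes V_\alpha$ with the stated one. The foundational caveat you flag (continuous cohomology of proalgebraic groups and its Hochschild--Serre sequence) is real, and is precisely what the paper relegates to the forthcoming notes rather than proving here.
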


This theorem alone and in combination with the Base Change
Theorem~\ref{thm:base-change} can often be used to compute $\u$. Combined with
Corollary~\ref{cor:triv-free}, it gives the following criterion for the
vanishing of $\u$.

\begin{corollary}
\label{cor:vanishing}
The prounipotent radical of $\cG$ vanishes if and only if $H^1(\G;V) = 0$ for
all irreducible finite dimensional $R$-modules. $\Box$
\end{corollary}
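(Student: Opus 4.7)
The goal is to show that $\U = \{1\}$ (equivalently $\u = 0$) if and only if $H^1(\G;V)$ vanishes for every irreducible finite dimensional $R$-module $V$. By Corollary~\ref{cor:triv-free}, the vanishing of the pronilpotent Lie algebra $\u$ is equivalent to $H^1(\u) = 0$, equivalently (via duality of pro/ind-objects) to $H_1(\u) = 0$. So the plan is to translate this condition into one about group cohomology of $\G$ using Theorem~\ref{thm:presentation}.

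The forward direction is immediate: if $\u = 0$, then $H^1(\u) = 0$, and the isomorphism
\[
\Hom_R(H_1(\u),V) \cong \big(H^1(\u)\otimes V\big)^R \xrightarrow{\cong} H^1(\G;V)
\]
from Theorem~\ref{thm:presentation} with $k=1$ forces $H^1(\G;V) = 0$ for every (irreducible) finite dimensional $R$-module $V$.

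For the converse, suppose $H^1(\G;V) = 0$ for every irreducible finite dimensional $R$-module $V$. Then by the same isomorphism, $\Hom_R(H_1(\u),V) = 0$ for every such $V$. The content of the proof is then to conclude that $H_1(\u) = 0$. Here one uses that $R$ is reductive and that $H_1(\u)$ is a pro-object of finite dimensional $R$-modules (as explained in Section~\ref{sec:inf_dim} and in the discussion of the Levi decomposition of $\cG$). Any pro-representation of a reductive group $R$ is a direct product of its isotypic components; the $V$-isotypic component is detected by $\Hom_R(-,V)$. Vanishing of $\Hom_R(H_1(\u),V)$ for every irreducible $V$ therefore forces every isotypic component, and hence $H_1(\u)$ itself, to vanish.

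The only genuinely non-formal point is the last step, namely the isotypic decomposition of the pro-object $H_1(\u)$. The subtlety is that one must check that an irreducible $R$-subquotient occurring in some finite dimensional quotient of $H_1(\u)$ cannot escape detection by the functors $\Hom_R(-,V)$ as $V$ ranges over finite dimensional irreducibles; this is a standard fact for pro-representations of reductive groups (working in $\Vec_F^{\fin}$). Once this is granted, the proof is complete by combining the two directions with Corollary~\ref{cor:triv-free}. No appeal to Theorem~\ref{thm:base-change} or absolute irreducibility is needed, since the first isomorphism of Theorem~\ref{thm:presentation} already holds for every finite dimensional $R$-module over $F$.
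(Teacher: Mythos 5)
Your proof is correct and takes essentially the same approach as the paper: the paper gives no separate argument, but the sentence preceding the corollary says it follows by combining Theorem~\ref{thm:presentation} (the $k=1$ isomorphism) with Corollary~\ref{cor:triv-free}, which is exactly the route you take. You have simply made explicit the isotypic-decomposition step for pro-representations of the reductive group $R$ that the paper leaves implicit.
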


Combining this with right exactness (Prop.~\ref{prop:rightexact}) yields:

\begin{corollary}
\label{cor:surjectivity}
If $H^1(\im\rho;V) = 0$ for all finite dimensional $R$-modules $V$, then
$$
\big(\ker\rho\big)^\un_{/F} \to \cG \to R \to 1
$$
is exact. That is, $\U$ is a quotient of $\big(\ker\rho\big)^\un_{/F}$.
\end{corollary}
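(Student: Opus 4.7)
The plan is to combine the right exactness statement (Proposition~\ref{prop:rightexact}) with the vanishing criterion (Corollary~\ref{cor:vanishing}) in a direct two-step fashion; essentially all the work has already been done, and nothing genuinely hard should remain.

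First I would apply Proposition~\ref{prop:rightexact} to the given Zariski dense homomorphism $\rho : \G \to R$. Let $\cR$ denote the completion of $\im\rho$ with respect to the tautological inclusion $\im\rho \hookrightarrow R$. Proposition~\ref{prop:rightexact} then yields the exact sequence
$$
\big(\ker\rho\big)^\un_{/F} \to \cG \to \cR \to 1.
$$
So it suffices to identify $\cR$ with $R$, i.e.\ to show that the prounipotent radical of $\cR$ is trivial.

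Second, I would invoke Corollary~\ref{cor:vanishing} applied to the group $\im\rho$ together with its Zariski dense inclusion into $R$. This says that the prounipotent radical of $\cR$ vanishes if and only if $H^1(\im\rho; V) = 0$ for every irreducible finite dimensional $R$-module $V$. Since the hypothesis of the corollary gives this vanishing for \emph{all} finite dimensional $R$-modules $V$, and in particular for all the irreducible ones, the prounipotent radical of $\cR$ is trivial. Hence $\cR = R$, and substituting this identification into the exact sequence above yields the desired exactness of
$$
\big(\ker\rho\big)^\un_{/F} \to \cG \to R \to 1.
$$

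There is really no main obstacle here: the corollary is a formal consequence of stitching together the two previously stated results, with the only subtlety being the (harmless) observation that the hypothesis about $H^1(\im\rho;V)$ for all finite dimensional $R$-modules subsumes the weaker condition for irreducibles required by Corollary~\ref{cor:vanishing}. The only mildly nontrivial check is confirming that Corollary~\ref{cor:vanishing} may indeed be applied to the Zariski dense inclusion $\im\rho \hookrightarrow R$ rather than only to the original $\rho$, but this is immediate from the statement of that corollary, which concerns any Zariski dense homomorphism to a reductive group.
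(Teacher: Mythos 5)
Your proof is correct and follows exactly the approach the paper intends: the paper introduces this corollary with the sentence ``Combining this with right exactness (Prop.~\ref{prop:rightexact}) yields,'' so its proof is precisely the two-step argument you spell out. You have also correctly noted both minor points one might worry about — that Corollary~\ref{cor:vanishing} applies to the Zariski dense inclusion $\im\rho \hookrightarrow R$, and that the hypothesis for all finite dimensional $R$-modules subsumes the irreducible case.
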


Additional hypotheses give an upper bound on the kernel. The following result is
a refined version of \cite[Prop.~4.13]{hain:comp}.

\begin{theorem}
\label{thm:kernel}
Suppose that $\rho : \G \to R$ is a Zariski dense homomorphism. Denote $\ker
\rho$ by $T$. If the $\im \rho$ module $H_1(T;F)$ is finite dimensional and the
restriction (via $\rho$) of a finite dimensional $R$-module, and if $H^1(\im
\rho;V) = 0$ for all irreducible finite dimensional representations of $R$, then
there is a natural exact sequence
$$
1 \to K \to T^\un_{/F} \to \cG \to R \to 1
$$
where $K$ is contained in the center of $T^\un_{/F}$. Moreover, if
$H^2(\im\rho;V)$ is finite dimensional for all irreducible $R$-modules $V$,
then $K$ is an $R$-submodule of the abelian prounipotent group
$$
\prod_{\alpha} H^2(\im \rho;V_\alpha)^\ast \otimes V_\alpha.
$$
where $V_\alpha$ ranges over representatives of the isomorphism classes of
finite dimensional $R$-modules.
\end{theorem}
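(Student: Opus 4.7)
My plan proceeds in three stages: (i) exhibit the exact sequence via right exactness and identify $H_1(\u)$ so that $\mathfrak{k} \subseteq [\t,\t]$, (ii) prove centrality of $K$, and (iii) bound $K$ by $H_2(\u)$ using Theorem~\ref{thm:presentation}.

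For (i), Corollary~\ref{cor:surjectivity} applies under hypothesis (b), giving right exactness $T^\un_{/F} \twoheadrightarrow \U$, where $\U$ is the prounipotent radical of $\cG$; setting $K := \ker(T^\un_{/F} \to \U)$ produces the claimed sequence. Write $\t$ and $\mathfrak{k}$ for the Lie algebras of $T^\un_{/F}$ and $K$, and set $W := H_1(T;F)$. Since $T \subseteq \ker\rho$ acts trivially on every $R$-module $V$, the five-term sequence for $1 \to T \to \G \to \im\rho \to 1$ combined with the vanishing of $H^1(\im\rho;V)$ for irreducible $V$ yields $H^1(\G; V) \cong H^1(T; V)^{\im\rho} = \Hom_R(W, V)$; the last identification uses Zariski density of $\im\rho$ in $R$ to promote $\im\rho$-equivariant maps between $R$-modules to $R$-equivariant ones. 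Pairing this with the natural isomorphism $\Hom_R(H_1(\u), V) \cong H^1(\G; V)$ from Theorem~\ref{thm:presentation} and invoking Yoneda identifies $H_1(\u) \cong W$ as $R$-modules. Since $H_1(\t) = W$ also, the surjection $H_1(\t) \to H_1(\u)$ is an isomorphism, so $\mathfrak{k} \subseteq [\t,\t]$.

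Proving $[\t,\mathfrak{k}] = 0$ --- equivalently, $K \subseteq Z(T^\un_{/F})$ --- is the main obstacle. The approach I would pursue is to form the central quotient $\bar\t := \t/[\t,\mathfrak{k}]$, equip the corresponding prounipotent group with a pro-algebraic $R$-action extending the $\im\rho$-action, assemble a proalgebraic extension $1 \to \exp\bar\t \to \tilde G \to R \to 1$ carrying a natural homomorphism $\G \to \tilde G$, and invoke the universal property of $\cG$ to produce a section $\u \hookrightarrow \bar\t$. Care is required: the same argument applied naively to $\t$ itself would force $\mathfrak{k} = [\t,\mathfrak{k}]$ and, by pronilpotency combined with $\mathfrak{k} \subseteq [\t,\t]$, the too-strong conclusion $\mathfrak{k} = 0$. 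The resolution is that the $\im\rho$-action on $\t$ itself need not extend pro-algebraically to $R$ --- indeed this obstruction is essentially what $K$ measures --- but the relevant obstructions are absorbed into $[\t,\mathfrak{k}]$, so the action does descend to $\bar\t$. A diagram chase comparing the composition $T^\un_{/F} \to \cG \to \tilde G$ with the universal-property map $T^\un_{/F} \to \exp\bar\t$ then forces $[\t,\mathfrak{k}] = 0$.

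The upper bound is a Hopf-type computation. The 5-term Lie algebra sequence for $0 \to \mathfrak{k} \to \t \to \u \to 0$ reads
$$H_2(\t) \to H_2(\u) \to \mathfrak{k} \to H_1(\t) \to H_1(\u) \to 0,$$
with the final arrow an isomorphism by (i), so $\mathfrak{k}$ is a quotient of $H_2(\u)$. Dualizing the injection $H^2(\u) \hookrightarrow \bigoplus_\alpha H^2(\G;V_\alpha) \otimes V_\alpha^\ast$ of Theorem~\ref{thm:presentation} produces a surjection $\bigoplus_\alpha H^2(\G; V_\alpha)^\ast \otimes V_\alpha \twoheadrightarrow H_2(\u)$. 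Under the additional finite-dimensionality hypothesis on $H^2(\im\rho; V_\alpha)$, the Hochschild--Serre spectral sequence for $T \to \G \to \im\rho$, together with the hypothesis that $W$ is the restriction of a finite-dimensional $R$-module, restricts the pertinent $R$-isotypic piece of $H^2(\G; V_\alpha)$ surviving onto $\mathfrak{k}$ to the edge contribution $E_\infty^{2,0}$, itself a quotient of $H^2(\im\rho; V_\alpha)$. Dualizing and summing over $\alpha$ yields the embedding of $K$ as an $R$-submodule of $\prod_\alpha H^2(\im\rho; V_\alpha)^\ast \otimes V_\alpha$.
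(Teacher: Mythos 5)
The paper does not give its own proof of Theorem~\ref{thm:kernel}; it is stated as a refinement of \cite[Prop.~4.13]{hain:comp} with full details deferred to \cite{hain-matsumoto:notes}. So I will evaluate your argument on its own terms, and there are genuine gaps.

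The central error is in stage~(i), the claim that $H^1(\G;V)\cong H^1(T;V)^{\im\rho}$. The five-term sequence for $1\to T\to\G\to\im\rho\to 1$ gives, after using $H^1(\im\rho;V)=0$, only an \emph{injection} $H^1(\G;V)\hookrightarrow H^1(T;V)^{\im\rho}$; the cokernel is the image of the transgression $d_2\colon H^1(T;V)^{\im\rho}\to H^2(\im\rho;V)$, which need not vanish. Consequently $H_1(\t)\to H_1(\u)$ is a surjection but in general not an isomorphism, and $\mathfrak{k}:=\ker(\t\to\u)$ need not lie in $[\t,\t]$. The paper's own Example (Universal Central Extensions) refutes the claim directly: there $T=\Z$, $T^\un_{/\Q}=\Q$, $\U=1$, so $H_1(\t)=\Q$ while $H_1(\u)=0$, and $\mathfrak{k}=\Q\not\subseteq[\t,\t]=0$. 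This error propagates to stage~(iii): once centrality is granted, the five-term sequence
$$
H_2(\t)\to H_2(\u)\to\mathfrak{k}\to H_1(\t)\to H_1(\u)\to 0
$$
shows $\mathfrak{k}$ is an extension of $\ker\{H_1(\t)\to H_1(\u)\}$ by a quotient of $H_2(\u)$, not a quotient of $H_2(\u)$ alone. In the same example $H_2(\u)=0$ yet $\mathfrak{k}=\Q$, so your conclusion there is false. The term you are dropping, $\ker\{H_1(\t)\to H_1(\u)\}$, is exactly the one controlled by the transgression into $H^2(\im\rho;\cdot)$, and it is what produces the bound $\prod_\alpha H^2(\im\rho;V_\alpha)^\ast\otimes V_\alpha$ in the statement; routing through $H_2(\u)$ and the injection of Theorem~\ref{thm:presentation} only produces a bound by $H^2(\G;\cdot)^\ast$, which is not what is claimed.

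Stage~(ii), centrality of $K$, is the substantive content and you explicitly flag it as the obstacle without resolving it. The outline --- pass to $\bar\t=\t/[\t,\mathfrak{k}]$, show the conjugation action of $\im\rho$ extends to an algebraic $R$-action on $\bar\t$, then use the universal property of $\cG$ to split $\u\hookrightarrow\bar\t$ and force $[\t,\mathfrak{k}]=0$ --- is a reasonable shape for a proof, but the pivotal assertion that the $\im\rho$-action descends to an algebraic $R$-action on $\bar\t$ (``the relevant obstructions are absorbed into $[\t,\mathfrak{k}]$'') is precisely what needs proof and is simply asserted. The hypothesis that $H_1(T;F)$ is finite dimensional and the restriction of an $R$-module, together with $H^1(\im\rho;V)=0$, must enter here in an essential and nontrivial way, and as written it does not.
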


\subsection{Examples}
Equipped with the results of the previous section, we can approach the problem
of computing the relative completions in natural examples. 

\begin{example}[Lattices]
\label{ex:lattices}
If $\G$ is an irreducible lattice in a semi-simple real Lie group $G$ of rank
$\ge 2$, then Raghunathan's vanishing theorem \cite{raghunathan} states that
$$
H^1(\G;V) = 0
$$
for all irreducible representations $V$ of $G$. Corollary~\ref{cor:vanishing}
implies that the completion of $\G$ with respect to the inclusion $\G \to G$
over $\R$ is $G$.

In particular, when $n\ge 3$, the completion of any finite index subgroup $\G$
of $\SL_n(\Z)$ with respect to the inclusion $\G \to \SL_n(\Q)$ is $\SL_n(\Q)$.
When $g\ge 2$, the completion of any finite index subgroup of $\Sp_g(\Z)$ with
respect to the inclusion $\G \to \Sp_g(\Q)$ is $\Sp_g(\Q)$.

The rank condition is necessary. The groups $\SL_2(\R)$ and $\Sp_1(\R)$ are
isomorphic and have real rank 1. If we take $\G$ to be one of the isomorphic
groups $\SL_2(\Z)$, $\Aut^+ F_2$, $\G_{S,x}$, where $S$ is a genus 1 surface,
then the prounipotent radical of the completion of $\G$ with respect to the
inclusion $\G \to \SL_2(\Q)$ is a free prounipotent group whose abelianization
is infinite dimensional. (Cf. \cite[Rem.~3.9]{hain:torelli}.) It is closely
connected with classical modular forms and elliptic motives. (Cf.\
\cite{hain-pearlstein}.)
\end{example}

Results of Borel \cite{borel} imply that if $\im \rho$ is arithmetic of
sufficiently high rank, then $H^2(\im\rho;V)$ vanishes for all non-trivial
$R$-modules and $H^2(\im \rho;\Q)$ is isomorphic to the corresponding cohomology
group of the compact dual of the symmetric space of $R\otimes\R$. In particular,
Borel's formula implies the vanishing of $H^2(\SL_2(\Z);V)$ for {\em all}
$\SL_n$-modules $V$ when $n\ge 4$. It also implies the vanishing of
$H^2(\Sp_g(\Z),V)$ for all non-trivial irreducible $\Sp_g$-modules when $g\ge
3$.

\begin{example}[Universal Central Extensions]
Suppose that $\G$ is a non-zero multiple of the universal central extension of
$\Sp_g(\Z)$, where $g \ge 2$:
$$
0 \to \Z \to \G \to \Sp_g(\Z) \to 1.
$$
Let $R = \Sp_g(\Q)$ and $\rho : \G \to \Sp_g(\Q)$ be the obvious homomorphism.
Denote the relative completion of $\G$ with respect to $\rho$ by $\cG$. By
Example~\ref{ex:lattices}, the completion of $\Sp_g(\Z)$ with respect to the
inclusion $\Sp_g(\Z) \to \Sp_g(\Q)$ is $\Sp_g(\Q)$. Raghunathan's Theorem
implies that $H^1(\Sp_g(\Z);V)$ vanishes for all finite dimensional
representations $V$ of $\Sp_g$. An elementary spectral sequence argument implies
that $H^1(\G,V)$ also vanishes for all finite dimensional $\Sp_g$-modules $V$.
Cor.~\ref{cor:vanishing} then implies that $\cG \to \Sp_g(\Q)$ is an
isomorphism.

This provides an interesting example of Theorem~\ref{thm:kernel}. Borel's
vanishing theorem implies that, when $g\ge 3$, $H^2(\Sp_g(\Z),V)$ vanishes
for all non-trivial irreducible $\Sp_g(\Q)$-modules and that $H^2(\Sp_g(\Z),\Q)$
is 1-dimensional.
Since the
unipotent completion of $\Z$ is $\Q$, Theorem~\ref{thm:kernel} implies that we
have an exact sequence
$$
H^2(\Sp_g(\Q);\Q)^\ast \to \Q \to \cG \to \Sp_g(\Q) \to 1. 
$$
Since $\cG \to \Sp_g(\Q)$ is an isomorphism, it follows that
$H^2(\Sp_g(\Q);\Q)^\ast \to \Q$ is an isomorphism and that $\Q \to \cG$ is
trivial. $\Box$
\end{example}

As remarked in Example~\ref{ex:lattices}, $\IA_2^\un \to \cIA_2$ and $\IO_2^\un
\to \cIO_2$ are far from surjective. However, when $n\ge 3$, the situation
improves.

\begin{corollary}
\label{cor:IA_IO_unipt}
If $n\ge 3$, then the natural homomorphisms $\IA_n^\un \to \cIA_n$ and
$\IO_n^\un \to \cIO_n$ are surjective. If $n\ge 4$, they are isomorphisms. 
\end{corollary}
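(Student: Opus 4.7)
The plan is to exploit the two general tools from the previous subsection: Corollary~\ref{cor:surjectivity} for the surjectivity assertion and Theorem~\ref{thm:kernel} for the isomorphism assertion. In both cases the Zariski dense homomorphism is $\rho:\Aut^+F_n\to\SL_n(\Q)$ (respectively $\Out^+F_n\to\SL_n(\Q)$), whose kernel is $\IA_n$ (respectively $\IO_n$) and whose image is $\SL_n(\Z)$ by Nielsen's classical surjectivity theorem.

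For surjectivity when $n\ge 3$, the only hypothesis of Corollary~\ref{cor:surjectivity} to verify is that $H^1(\SL_n(\Z);V)=0$ for every finite dimensional $\SL_n(\Q)$-module $V$. For $n\ge 3$ the lattice $\SL_n(\Z)$ has real rank $n-1\ge 2$, so Raghunathan's vanishing theorem (as in Example~\ref{ex:lattices}) handles the non-trivial irreducibles; the trivial module is handled by the perfectness of $\SL_n(\Z)$ for $n\ge 3$, and complete reducibility of finite dimensional $\SL_n(\Q)$-representations extends the vanishing to all of them. Applying the corollary to $\Aut^+F_n$ and $\Out^+F_n$ delivers the desired surjections $\IA_n^\un\twoheadrightarrow\cIA_n$ and $\IO_n^\un\twoheadrightarrow\cIO_n$.

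For the isomorphism assertion when $n\ge 4$, I would apply Theorem~\ref{thm:kernel}. Two of its hypotheses are already at hand: the $H^1$ vanishing of the previous step, and $H^2(\SL_n(\Z);V)=0$ for every irreducible finite dimensional $\SL_n$-module when $n\ge 4$, which is Borel's theorem recorded in the paragraph after Example~\ref{ex:lattices}. The conclusion of Theorem~\ref{thm:kernel} is then an exact sequence
\[
1\to K\to\IA_n^\un\to\cIA_n\to 1
\]
with $K\subseteq\prod_\alpha H^2(\SL_n(\Z);V_\alpha)^\ast\otimes V_\alpha=0$, forcing $K=0$; the same argument works for $\IO_n$.

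The main obstacle is the remaining hypothesis of Theorem~\ref{thm:kernel}: that $H_1(\IA_n;\Q)$ and $H_1(\IO_n;\Q)$ are finite dimensional and are restrictions of finite dimensional $\SL_n(\Q)$-modules along $\rho$. For $\IA_n$ I would invoke Magnus's classical computation identifying $H_1(\IA_n;\Z)$ with the $\GL_n(\Z)$-module $\Hom_\Z(H,\Lambda^2 H)$, where $H=\Z^n$; after $\otimes_\Z\Q$ this is manifestly the restriction of a finite dimensional rational $\SL_n$-module. For $\IO_n$, the extension $1\to F_n\to\IA_n\to\IO_n\to 1$ (using that $F_n$ has trivial center for $n\ge 2$, so the inner automorphism subgroup is a copy of $F_n$) together with Magnus's result identifies $H_1(\IO_n;\Q)$ as the cokernel of a natural $\SL_n$-equivariant map $H_\Q\to H_\Q^\ast\otimes\Lambda^2 H_\Q$, once again a finite dimensional $\SL_n$-module.
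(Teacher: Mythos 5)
Your argument is correct and follows the same route as the paper: Corollary~\ref{cor:surjectivity} together with Raghunathan's vanishing theorem gives surjectivity for $n\ge 3$, and Theorem~\ref{thm:kernel} together with Borel's vanishing of $H^2(\SL_n(\Z);V)$ for $n\ge 4$ gives injectivity, with the hypothesis on $H_1(\IA_n)$ supplied by Magnus's computation. The only cosmetic difference is that the paper cites Kawazumi for the $\GL_n(\Z)$-equivariant isomorphism $H_1(\IO_n)\cong\Hom(V,\Lambda^2V)/V$, whereas you derive it from Magnus's description of $H_1(\IA_n)$ and the five-term exact sequence of the extension $1\to F_n\to\IA_n\to\IO_n\to 1$; both are valid, and your derivation is a correct proof of the fact the paper invokes by citation.
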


\begin{proof}
By results of Magnus \cite{magnus} and Kawazumi \cite{kawazumi}, there are
natural $\GL_n(\Z)$-equivariant isomorphisms
$$
H_1(\IA_n) \cong \Hom(V,\Lambda^2 V) \text{ and }
H_1(\IO_n) \cong \Hom(V,\Lambda^2 V)/V,
$$
where $V = H_1(F_n)$, from which it follows that the $\SL_n(\Z)$-modules
$H_1(\IA_n)$ and $H_1(\IO_n)$ are the restrictions of $\SL(V)$-modules.
Surjectivity when $n\ge 3$ follows from Corollary~\ref{cor:surjectivity} and
Raghunathan's vanishing result. When $n\ge 4$, the result follows from
Theorem~\ref{thm:kernel} and Borel's vanishing result, stated above.
\end{proof}

Another situation in which left exactness holds, that we shall need later, is
the following. Suppose that
$$
\xymatrix{
1 \ar[r] & \G \ar[d]_{\rho_\G} \ar[r] & \Ghat \ar[d]_{\rho_\Ghat}\ar[r] &
G \ar@{=}[d] \ar[r] & 1 \cr
1 \ar[r] & R \ar[r] & \Rhat \ar[r] & G \ar[r] & 1
}
$$
is a commutative diagram of groups with exact rows where:
\begin{enumerate}

\item $\G$ and $\Ghat$ are discrete groups;

\item $R$ and $\Rhat$ are reductive $F$-groups;

\item $G$ is a finite group;

\item $\rho_\G$ is Zariski dense (which implies that $\rho_{\Ghat}$ is also
Zariski dense).

\end{enumerate} 
Denote the completion of $\G$ with respect to $\rho_\G$ by $\cG$ and the
completion of $\Ghat$ with respect to $\rho_\Ghat$ by $\cGhat$. Naturality
implies that there is a homomorphism $\cG \to \cGhat$ such that the diagram
$$
\xymatrix{
\G \ar[r] \ar[d] & \cG \ar[d] \cr
\Ghat \ar[r] & \cGhat
}
$$
commutes. Right exactness implies that the sequence
$
\cG \to \cGhat \to G \to 1
$
is exact. Denote the prounipotent radicals of $\cG$ and $\cGhat$ by $\U$
and $\Uhat$, respectively.

\begin{proposition}
\label{prop:fte_ext}
The natural homomorphism $\cG \to \cGhat$ is injective. Consequently, the
induced mapping $\U \to \Uhat$ of prounipotent radicals is an isomorphism.
\end{proposition}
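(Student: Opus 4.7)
The strategy is to factor $\cG \to \cGhat$ through a specific closed proalgebraic subgroup of $\cGhat$, and then use a cohomological criterion to show that the induced map on prounipotent radicals is an isomorphism. This would give both the injectivity claim and the second assertion of the proposition.

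Since $R$ is the kernel of $\Rhat \to G$, the composition $\cG \to \cGhat \to \Rhat \to G$ is trivial. Hence the image of $\cG$ lies in the closed proalgebraic subgroup $H := \ker(\cGhat \to G)$, which fits in an extension $1 \to \Uhat \to H \to R \to 1$. The natural map factors as $\cG \xrightarrow{\phi} H \hookrightarrow \cGhat$, and $\phi$ is a map of proalgebraic extensions of $R$ by prounipotent groups inducing the identity on the $R$-quotient. Therefore $\phi$ is an isomorphism iff the induced map $\tilde\phi : \U \to \Uhat$ is, and by Corollary~\ref{cor:isom} it suffices to show $\tilde\phi$ induces an isomorphism on $H^1$ and a monomorphism on $H^2$, or dually that $H_1(\u) \to H_1(\uhat)$ is an isomorphism and $H_2(\u) \to H_2(\uhat)$ is surjective.

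To verify this, combine Theorem~\ref{thm:presentation} with Shapiro's lemma. For each finite-dimensional $R$-module $V$, Theorem~\ref{thm:presentation} identifies $\Hom_R(H_k(\u), V)$ with (for $k=1$) or injects it into (for $k=2$) $H^k(\G; V)$. For the $\uhat$ side, Frobenius reciprocity for the finite-index inclusion $R \hookrightarrow \Rhat$ gives
$$
\Hom_R(H_k(\uhat), V) \cong \Hom_{\Rhat}(H_k(\uhat), \Ind_R^{\Rhat} V),
$$
and Theorem~\ref{thm:presentation} identifies this (iso for $k=1$, injection for $k=2$) with $H^k(\Ghat; \Ind_R^{\Rhat} V)$. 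Since $\Ghat/\G = G = \Rhat/R$, the $\Ghat$-module $\Ind_R^{\Rhat} V$ (pulled back via $\rho_\Ghat$) coincides with $\Ind_\G^{\Ghat} V$; Shapiro's lemma then gives $H^k(\Ghat; \Ind_\G^{\Ghat} V) \cong H^k(\G; V)$. The naturality clause of Theorem~\ref{thm:presentation}, applied to the map $\G \to \Ghat$ and the compatible map $\u \to \uhat$, makes the two resulting identifications compatible with the natural map $\Hom_R(H_k(\uhat), V) \to \Hom_R(H_k(\u), V)$, so that both sides are matched (via the identity on $H^k(\G; V)$ for $k=1$, and via a compatible injection into $H^k(\G; V)$ for $k=2$). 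Taking duals yields the required iso on $H^1$ and mono on $H^2$.

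The main obstacle is the last step, namely checking that the identifications $\Ind_R^{\Rhat}V \cong \Ind_\G^{\Ghat}V$ and Shapiro's lemma are compatible with Theorem~\ref{thm:presentation}'s naturality in such a way that the resulting diagram of identifications with $H^k(\G; V)$ actually commutes. This is essentially a careful diagram chase, but it requires keeping straight the interplay between (i) algebraic induction for $R \hookrightarrow \Rhat$, (ii) discrete induction for $\G \hookrightarrow \Ghat$, and (iii) the restriction of the $\Rhat$-module structure to a $\Ghat$-module structure through $\rho_\Ghat$. Working with the $\Hom_R$-formulation of Theorem~\ref{thm:presentation} rather than its tensor-invariant form makes this bookkeeping considerably cleaner.
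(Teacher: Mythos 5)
Your proposal is correct and follows essentially the same route as the paper's proof: reduce to showing $\u \to \uhat$ is an isomorphism, apply Stallings' criterion (Cor.~\ref{cor:isom}), and verify the hypotheses on $H^1$ and $H^2$ by combining Theorem~\ref{thm:presentation} with Frobenius reciprocity for $R \hookrightarrow \Rhat$, the identification $\Ind_R^\Rhat V \cong \Ind_\G^\Ghat V$, and Shapiro's Lemma. The ``diagram chase'' you flag as the remaining obstacle is exactly what the paper spells out with its explicit commutative diagram, using the naturality clause of Theorem~\ref{thm:presentation}, so your concern is real but resolved by precisely the ingredients you already have in hand.
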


\begin{proof}
Stallings' criterion (Cor.~\ref{cor:isom}) will be used to show that $\u \to
\uhat$ is an isomorphism. To prove this we need the notion of an induced module.
(This is sometimes called a co-induced module, cf.\ \cite[p.~67]{brown}.)

For an $R$-module $V$, define the representation induced from $V$ to $\Rhat$ by
$$
\Ind_R^\Rhat V = \Fun_R(\Rhat,V),
$$
where $\Fun_R$ denotes the set of left $R$-invariant functions $\phi : \Rhat \to
V$. This is a left $\Rhat$-module with respect to the action $(r\phi)(x) =
\phi(xr)$, where $r,x\in \Rhat$. Since $R$ has finite index in $\Rhat$, the
induced representation is a rational representation of $\Rhat$ whenever $V$ is a
rational representation of $R$.

For all $R$-modules $U$ and $\Rhat$-modules $V$, there is a natural
isomorphism
$$
\Hom_R(\Res_R^\Rhat U, V) \cong \Hom_\Rhat(U,\Ind_R^\Rhat V),
$$
where $\Res_R^\Rhat U$ denotes the restriction of $U$ to $R$.

Likewise, for any $\G$ module $V$, we can define $\Ind_\G^\Ghat V =
\Fun_\G(\Ghat,V)$. If $V$ is an $R$-module, viewed as a $\G$-module via
$\rho_\G$, then the restriction mapping
$$
\Ind_R^\Rhat V \overset{\simeq}{\longrightarrow} \Ind_\G^\Ghat V, 
$$
is an isomorphism of $\Ghat$-modules.

To apply Stallings' criterion, we need to show that $H^k(\uhat) \to H^k(\u)$ is
an isomorphism (resp., injection) when $k=1$ (resp., $k=2$). Since $R$ is
reductive, it suffices to show that the natural mapping
$$
\phi_k : \Hom_R(\Res_R^\Rhat H_k(\uhat),V) \to \Hom_R(H_k(\u),V)
$$
is an isomorphism (resp., injection) for all finite dimensional $R$-modules $V$
when $k=1$ (resp., $k=2$). Consider the commutative diagram
{\small
$$
\xymatrix{
\Hom_R(\Res_R^\Rhat H_k(\uhat),V) \ar[r]^\simeq\ar[d]_{\phi_k} &
\Hom_\Rhat(H_k(\uhat),\Ind_R^\Rhat V) \ar[r] &
H^k(\Ghat;\Ind_\G^\Ghat V) \ar@{=}[d] \cr
\Hom_R(H_k(\u),V) \ar[rr] & & H^k(\G;V)
}
$$
}
The right hand vertical map is an isomorphism by Shapiro's Lemma
\cite[p.~73]{brown}. We apply Theorem~\ref{thm:presentation}. When $k=1$, all
horizontal mappings are isomorphisms, which implies that $\phi_1$ is an
isomorphism. When $k=2$, all horizontal mappings are injective, which implies
that $\phi_2$ is injective.
\end{proof}

\begin{example}
Suppose that $n\ge 1$. Denote the subgroup of $\GL_n(R)$ that consists of
matrices with determinant $\pm 1$ by $\SLhat(R)$. Then
Proposition~\ref{prop:fte_ext} implies that the commutative diagram
$$
\xymatrix{
1 \ar[r] & \Aut^+ F_n \ar[d]_{\rhohat^+} \ar[r] & \Aut F_n \ar[d]_{\rhohat}
\ar[r] & C_2 \ar@{=}[d] \ar[r] & 1 \cr
1 \ar[r] & \A_n \ar[d] \ar[r]^\phi & \Ahat_n \ar[d]\ar[r]
& C_2 \ar@{=}[d] \ar[r] & 1\cr
1 \ar[r] & \SL_n(\Q) \ar[r] & \SLhat_n(\Q) \ar[r] & C_2 \ar[r] & 1
}
$$
has exact rows. It follows that $\A_n$ is the identity component of  $\Ahat_n$.
There is a similar story for $\Out F_n$. It is for this reason that in
Section~\ref{sec:aut_out} we considered only the completions of $\Aut^+ F_n$ and
$\Out^+F_n$.
\end{example}

\section{Mapping Class Groups and their Completions}

Suppose that $g,n,r$ are non-negative integers. A decorated surface of type
$(g,n,r)$ is a pair $(S,D)$ where $S$ is a compact oriented surface of genus $g$
and $D=P\cup V$ is a set of decorations, where $P = \{x_1,\dots,x_n\}$ is a set
of $n$ points of $S$ and $V = \{v_1,\dots,v_r\}$ is a set of $r$ non-zero
tangent vectors of $S$. If $v_j \in T_{y_j}S$, the points
$x_1,\dots,x_n,y_1,\dots,y_r$ are required to be distinct. The decorated surface
$(S,D)$ is {\em stable} if the punctured surface $S_D' :=
S-\{x_1,\dots,x_n,y_1,\dots,y_r\}$ has negative Euler characteristic:
$$
\chi(S_D') = \chi(S) - |P| - |V| = 2-2g - (r+n) < 0.
$$

The mapping class group $\Ghat_{S,D}$ of a stable decorated surface $(S,D)$ is
the group of connected components of the group of orientation preserving
diffeomorphisms of $S$ that fix $P$ and $V$ set wise. There is a natural
surjection
$$
\Ghat_{S,D} \to \Aut D := \Aut P \times \Aut V.
$$
For a subgroup $G$ of $\Aut D$ define $\G^G_{S,D}$ to be the inverse image of
$G$ under this homomorphism. The classical mapping group of $(S,D)$ corresponds
to the trivial group $\triv$:
$$
\G_{S,D} := \G_{S,D}^\triv = \pi_0 \Diff^+(S,D).
$$
There is a natural extension
$$
1 \to \G_{S,D} \to \G^G_{S,D} \to G \to 1.
$$

The classification of surfaces implies that $\G_{S,D}^G$ depends only on
$(g,n,r)$ and the subgroup $G$ of $S_n\times S_r$.

For a commutative ring $R$, set $H_R = H_1(S;R)$. The group of automorphisms of
$H$ that preserve the intersection pairing is an algebraic group over $\Q$ that
we shall denote by $\Sp(H)$. There is a natural surjective homomorphism
$$
\rho : \G^G_{S,D} \to G\times \Sp(H_\Z).
$$
Its kernel is, by definition, the Torelli group $T_{S,D}$.

\subsection{Boundary Components versus Tangent Vectors}

Tangent vectors are essentially interchangeable with marked boundary components.
Because boundary components are less natural in algebraic geometry, we prefer to
work with tangent vectors. A marked boundary component of a surface is a
boundary component of the surface together with a point on the boundary
component. Marked boundary components can be exchanged with tangent vectors as
follows:

If $v \in T_y S$ is a non-zero tangent vector of a surface $S$, then one can
replace $(S,v)$ by a surface $\hat{S}$ with a marked boundary component. Here
$\hat{S}$ is the real oriented blowup of $S$ at $y$. This is the surface
obtained from $S$ by replacing $y$ by the circle of rays in $T_yS$. The marked
point on the boundary of $\hat{S}$ corresponds to the ray $\R^+v$ in $T_y
S$ determined by $v$. It will be denoted by $[v]$.

This process may be reversed by collapsing the boundary component to a point $y$
and choosing any non-zero tangent vector at $y$ that lies in the ray in $T_y S$
determined by the marked point. These identifications are well defined and
mutually inverse up to isotopy.

The  corresponding mapping class groups are isomorphic. For example, if $S$ is
compact, then the natural homomorphisms
$$
\xymatrix{
\pi_0 \Diff^+(S,v) \ar[r]^\simeq & \pi_0\Diff^+(\hat{S},[v]) &
\ar[l]_\simeq \pi_0\Diff^+(\hat{S},\partial\hat{S})
}
$$
are isomorphisms.

\subsection{Completions of Mapping Class Groups}
The ground field $F$ will be $\Q$ unless otherwise stated. Suppose that $(S,D)$
is a stable decorated surface and that $G$ is a subgroup of $\Aut D$, where $D =
P\cup V$. The group $G \times \Sp(H)$ is a reductive algebraic group over $\Q$
and the representation $\rho : \G^G_{S,D} \to G\times \Sp(H)$ is Zariski dense.
Denote the completion of $\G^G_{S,D}$ relative to $\rho$ by $\cG^G_{S,D}$. It is
an extension
$$
1 \to \U^G_{S,D} \to \cG^G_{S,D} \to G\times \Sp(H) \to 1.
$$

The next result follows directly from Proposition~\ref{prop:fte_ext}.

\begin{proposition}
For all subgroups $G$ of $\Aut D$, the sequence
$$
1 \to \cG_{S,D} \to \cG_{S,D}^G \to G \to 1
$$
is exact.
\end{proposition}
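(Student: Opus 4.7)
The plan is to reduce the statement to Proposition~\ref{prop:fte_ext} by exhibiting the appropriate commutative diagram of groups. Specifically, I would consider
$$
\xymatrix{
1 \ar[r] & \G_{S,D} \ar[d]_{\rho} \ar[r] & \G^G_{S,D} \ar[d]_{\rho}\ar[r] &
G \ar@{=}[d] \ar[r] & 1 \cr
1 \ar[r] & \Sp(H) \ar[r] & G \times \Sp(H) \ar[r] & G \ar[r] & 1
}
$$
whose top row is the defining extension of $\G^G_{S,D}$ by the usual mapping class group $\G_{S,D}$, whose bottom row is the obvious split extension, and whose vertical maps are the corresponding homomorphisms $\rho$. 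Commutativity is immediate from the constructions, since the map $\G^G_{S,D} \to G \times \Sp(H)$ projects to $G$ via the permutation action on the decorations, which is exactly the quotient $\G^G_{S,D} \to G$.

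Next, I would verify the four hypotheses required by Proposition~\ref{prop:fte_ext}. The groups $\G_{S,D}$ and $\G^G_{S,D}$ are discrete by definition; the groups $\Sp(H)$ and $G \times \Sp(H)$ are reductive $\Q$-groups, since $G$ is finite; $G$ itself is finite because it sits inside $\Aut D = \Aut P \times \Aut V$; and the restriction $\rho|_{\G_{S,D}} : \G_{S,D} \to \Sp(H)$ is Zariski dense because the classical action of the mapping class group surjects onto $\Sp(H_\Z)$, and the arithmetic lattice $\Sp(H_\Z)$ is Zariski dense in $\Sp(H_\Q)$.

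With all hypotheses in hand, Proposition~\ref{prop:fte_ext} yields that the induced homomorphism $\cG_{S,D} \to \cG^G_{S,D}$ is injective. On the other hand, the right-exactness observation recorded immediately before that proposition (and ultimately resting on Proposition~\ref{prop:rightexact}) already gives exactness of $\cG_{S,D} \to \cG^G_{S,D} \to G \to 1$. Splicing the injectivity with the right-exact sequence produces the desired short exact sequence.

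I do not anticipate any substantive obstacle here: all the heavy lifting was done in Proposition~\ref{prop:fte_ext}, and the only external input is the classical surjection of the mapping class group onto $\Sp(H_\Z)$, which is standard even in the decorated setting. The mild care required is just to check that the quotient $\G^G_{S,D} \to G$ coincides with the composite of $\rho$ with projection to $G$, so that the diagram commutes on the nose and Proposition~\ref{prop:fte_ext} applies verbatim.
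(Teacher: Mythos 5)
Your proof is correct and matches the paper's approach exactly: the paper states this proposition follows directly from Proposition~\ref{prop:fte_ext}, and your diagram together with the verification of the four hypotheses (and the splicing with right exactness from Proposition~\ref{prop:rightexact}) is precisely the intended argument.
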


The proposition implies that $\cG_{S,D}$ is the connected component of the
identity of $\cG^G_{S,D}$.

\begin{corollary}
For all subgroups $G$ of $\Aut D$, the Lie algebra of $\cG^G_{S,D}$ is
$\g_{S,D}$.
\end{corollary}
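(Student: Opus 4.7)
The plan is to deduce this corollary directly from the preceding Proposition together with the observation that $G$ is a finite group.

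First I would observe that, since $\Aut D = \Aut P \times \Aut V$ is a finite permutation group, any subgroup $G \subseteq \Aut D$ is finite. Viewing $G$ as an algebraic group over $\Q$, its Lie algebra is therefore zero, and $G$ coincides (as an algebraic group) with the component group $\pi_0 G$.

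Next I would invoke the exact sequence
$$
1 \to \cG_{S,D} \to \cG^G_{S,D} \to G \to 1
$$
established in the preceding Proposition. The image of $\cG_{S,D}$ in $\cG^G_{S,D}$ is a closed normal proalgebraic subgroup whose quotient is the finite group $G$; in particular, $\cG_{S,D}$ contains the identity component of $\cG^G_{S,D}$. Conversely, because $\cG_{S,D}$ is an extension of the connected group $\Sp(H)$ by the connected prounipotent group $\U_{S,D}$, it is itself connected, so it is exactly the identity component of $\cG^G_{S,D}$.

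Passing to Lie algebras is then immediate: taking Lie algebras of proalgebraic groups sends an exact sequence with finite quotient to an isomorphism on the kernel, so $\operatorname{Lie}\cG^G_{S,D} = \operatorname{Lie}\cG_{S,D} = \g_{S,D}$. There is no real obstacle here; the only point one must be careful about is that $G$ is automatically finite (hence Lie-algebra trivial), which is why the corollary follows cleanly from the previous Proposition.
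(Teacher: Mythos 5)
Your proof is correct and matches the paper's approach: the paper records after the preceding proposition that $\cG_{S,D}$ is the identity component of $\cG^G_{S,D}$, from which the Lie algebra statement is immediate since $G$ is finite. Your elaboration (connectedness of $\cG_{S,D}$ as an extension of $\Sp(H)$ by $\U_{S,D}$, and vanishing of $\operatorname{Lie} G$) fills in exactly the details the paper leaves implicit.
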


\begin{theorem}
\label{thm:central}
If $(S,D)$ is a stable decorated surface where $g(S) \ge 3$, then
$$
0 \to \Q \to T_{S,D}^\un \to \cG_{S,D} \to \Sp(H) \to 1
$$
is exact. When $g=2$, the homomorphism $T_{S,D}^\un \to \U_{S,D}$ is
surjective.
\end{theorem}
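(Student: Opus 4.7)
The plan is to apply Theorem~\ref{thm:kernel} to $\rho : \G_{S,D} \to \Sp(H)$ (with kernel $T_{S,D}$) to obtain a four-term exact sequence, and then verify that the central kernel identified by the theorem is exactly $\Q$ (in the $g \geq 3$ case). The three hypotheses of Theorem~\ref{thm:kernel} that must be checked are: (a) finite-dimensionality of $H_1(T_{S,D};\Q)$ together with its extension to a rational $\Sp(H)$-representation; (b) vanishing of $H^1(\Sp(H_\Z);V)$ for all irreducible $V$; and (c) finite-dimensionality of $H^2(\Sp(H_\Z);V)$ for all irreducible $V$.

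Hypothesis (b) is Raghunathan's theorem (Example~\ref{ex:lattices}), valid whenever $g \geq 2$. Hypothesis (a) is Johnson's theorem: for $g \geq 3$, the abelianization $H_1(T_{S,D};\Q)$ is (up to well-understood decoration-dependent summands) isomorphic to $\Lambda^3 H / H$, which is a finite-dimensional polynomial representation of $\Sp(H)$. Hypothesis (c), also for $g \geq 3$, is Borel's stable cohomology computation, which gives $H^2(\Sp(H_\Z); V) = 0$ for all non-trivial irreducible $\Sp(H)$-modules $V$ and $H^2(\Sp(H_\Z);\Q) \cong \Q$. Plugging these into Theorem~\ref{thm:kernel}, the product over irreducible $R$-modules collapses to the trivial summand, yielding a central kernel $K$ contained in $H^2(\Sp(H_\Z);\Q)^\ast \otimes \Q \cong \Q$. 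Since $\Q$ has no proper nontrivial subgroups as a prounipotent group, $K$ is either $0$ or all of $\Q$.

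The $g = 2$ case is easier: Raghunathan still applies, so Corollary~\ref{cor:surjectivity} gives the surjection $T_{S,D}^\un \twoheadrightarrow \U_{S,D}$ claimed in the theorem. We cannot say more because Johnson's finiteness fails (Mess), so $H_1(T_{S,D};\Q)$ is infinite-dimensional and Theorem~\ref{thm:kernel} is unavailable.

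The main obstacle, and the substantive part of the theorem for $g \geq 3$, is ruling out $K = 0$. The strategy is to exhibit a nontrivial class in the trivial $\Sp(H)$-isotypic component of $H^2(\u_{S,D})$, since by Theorem~\ref{thm:presentation} this component embeds into $H^2(\G_{S,D};\Q)$, while the class dual to the central kernel $K \subseteq Z(T_{S,D}^\un)$ naturally lives there. Concretely, the Chern class of the Hodge bundle (equivalently, Mumford's $\lambda_1$) pulls back from the generator of $H^2(\Sp(H_\Z);\Q)$ to a nonzero class in $H^2(\G_{S,D};\Q)$; an analysis of the Hochschild--Serre sequence for $1 \to T_{S,D} \to \G_{S,D} \to \Sp(H_\Z) \to 1$, combined with the explicit description of the kernel in Theorem~\ref{thm:kernel} as $\ker\!\big(H^2(\Sp(H_\Z);\Q)^\ast \to H^2(T_{S,D};\Q)^{\Sp(H)}\big)^\ast$ or a similar duality, shows that this Hodge class detects the required central extension. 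This cohomological bookkeeping is where the proof does real work, and it is where I would expect to spend the most effort.
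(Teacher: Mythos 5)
Your proposal matches the paper's argument step for step: apply Theorem~\ref{thm:kernel} with Raghunathan's vanishing, Johnson's finite generation of $H_1(T_{S,D})$, and Borel's computation of $H^2(\Sp_g(\Z);V)$ to pin $K$ inside $\Q$ when $g\ge 3$; handle $g=2$ via Corollary~\ref{cor:surjectivity} alone; and then observe that the only remaining content is $K\neq 0$, which is a Chern-class nonvanishing. The paper states this last equivalence and defers the proof to \cite{hain:comp} (and to the cleaner route via Morita's result explained in \cite{hain-reed}), whereas you sketch it via $\lambda_1$ and Hochschild--Serre. One small imprecision worth flagging in your sketch: nonvanishing of $\lambda_1$ in $H^2(\G_{S,D};\Q)$ is not by itself the point (it is classical and cheap); what must actually be shown is that $\lambda_1$ lies in the image of the injection $H^2(\u_{S,D})^{\Sp}\hookrightarrow H^2(\G_{S,D};\Q)$ --- equivalently that it lifts to the continuous cohomology of $\cG_{S,D}$ --- and that is exactly what is nontrivial and what Morita's result supplies. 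You correctly locate this as the locus of the ``real work,'' so the proposal is sound, but the sentence as written suggests a weaker and insufficient statement.
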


This result deserves some comment.
Corollary~\ref{cor:surjectivity} implies that $T_{S,D}^\un \to \U_{S,D}$ is
surjective when $g\ge 2$. Theorem~\ref{thm:kernel} implies that
$$
\Q \to T_{S,D}^\un \to \cG_{S,D} \to \Sp(H) \to 1
$$
is exact when $g\ge 3$. The injectivity of $\Q \to T_{S,D}^\un$ is  equivalent
to the non-vanishing of a Chern class. A clumsy proof of the  non-vanishing is
given in \cite{hain:comp}. However, the non-vanishing follows directly from an
earlier result of Morita \cite{morita}, as explained in \cite{hain-reed}.

\subsection{Tautological Homomorphisms}

Suppose that $(S,D)$ is a decorated surface. A decoration $\Dtilde = \Ptilde
\cup \Vtilde$ of $S$ is a {\em refinement} of $D$ if
$$
D\subseteq \Dtilde,\
P \subseteq \Ptilde \cup \Vtilde \text{ and }V \subseteq \Vtilde,
$$
where $D = P\cup V$ and $\Dtilde = \Ptilde \cup \Vtilde$. Thus, in passing
from $\Dtilde$ to $D$, tangent vectors can become points, and points and
tangent vectors can be forgotten.

Suppose that $(S,D)$ is stable. This implies that $(S,\Dtilde)$ is also stable.
For each $G \subseteq \Aut D \cap \Aut \Dtilde$, there is a natural homomorphism
$\G_{S,\Dtilde}^G \to \G_{S,D}^G$.

\subsection{Natural Actions}
\label{sec:actions}

The natural actions of mapping class groups on the fundamental groups of
associated surfaces can be completed.

Suppose that $(S,D)$ is a stable decorated surface where $D=P\cup V$. Recall
that $S_D'$ is the surface obtained from $S$ by removing the support of $D$.

\begin{definition}
\label{def:admissible}
An {\em admissible base point} $x$ of $S_D'$ is either (1) a point $x$ of
$S_D'$ or (2) a tangent vector $x \in V$. Let $\Dtilde = D\cup\{x\}$. This
equals $D$ when $x\in V$.
\end{definition}

If $x$ is an admissible base point of $S_D'$, then $\pi_1(S_D',x)$ is defined.

Suppose that $G$ is a subgroup of $\Aut \Dtilde$ that fixes $x$. It can also be
viewed as a subgroup of $\Aut D$. Denote the Lie algebra of $\pi_1(S_D',x)^\un$
by $\p(S_D',x)$. There are natural actions
$$
\thetatilde_x : \G_{S,\Dtilde}^G \to \Aut \p(S_D',x)
\text{ and }
\theta : \G_{S,D}^G \to \Out \p(S_D').
$$ 

The Zariski closure of the image of each of these is an extension of
$G\times\Sp(H)$ by a prounipotent group. The universal mapping property of
relative completion implies that $\thetatilde_x$ and $\theta$ induce
homomorphisms
$$
\phitilde_x : \cG_{S,\Dtilde}^G \to \Aut \p(S_D',x)
\text{ and }
\phi : \cG_{S,D}^G \to \Out \p(S_D').
$$
These, in turn, induce Lie algebra homomorphisms
$$
d\phitilde_x : \g_{S,\Dtilde} \to \Der \p(S_D',x)
\text{ and }
d\phi : \g_{S,D} \to \OutDer \p(S_D').
$$

\begin{proposition}
If $D$ is non-empty, then $\rhohat : \G_{S,D}^G \to \cG_{S,D}^G$ is injective.
\end{proposition}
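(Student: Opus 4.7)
The plan is to factor the classical faithful action of the mapping class group on $\pi_1(S_D')$ through $\rhohat$. The key input is that, because $D$ is non-empty, $S_D'$ is an open surface, so $\pi_1(S_D')$ is a free group and, by Magnus, injects into its unipotent completion.

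First I would reduce to the case $G = \triv$. By the preceding proposition, $\cG_{S,D}^G$ sits in an extension $1 \to \cG_{S,D} \to \cG_{S,D}^G \to G \to 1$ compatible under $\rhohat$ with the defining extension $1 \to \G_{S,D} \to \G_{S,D}^G \to G \to 1$ (identity on $G$). A diagram chase then shows that $\rhohat$ is injective on $\G_{S,D}^G$ iff it is injective on $\G_{S,D}$, so I may assume $G = \triv$.

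In the main case, $V \ne \emptyset$, pick a tangent vector $x \in V$. Then $\Dtilde = D$ and $x$ is pointwise fixed by every element of $\G_{S,D}$. By Nielsen's classical theorem, $\thetatilde_x : \G_{S,D} \to \Aut \pi_1(S_D',x)$ is injective. Since $\pi_1(S_D',x)$ is free, Magnus gives $\pi_1(S_D',x) \hookrightarrow \pi_1(S_D',x)^\un$, and since an automorphism of a free group is determined by its action on a free generating set, this yields an injection $\Aut \pi_1(S_D',x) \hookrightarrow \Aut \pi_1(S_D',x)^\un = \Aut \exp \p(S_D',x)$. The composition $\G_{S,D} \to \Aut \p(S_D',x)$ is therefore injective; by the universal property of relative completion recorded in Section~\ref{sec:actions}, it factors as $\phitilde_x \circ \rhohat$, so $\rhohat$ itself is injective.

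When $V$ is empty, so $D$ consists only of marked points, the same strategy applies via the outer action $\theta : \G_{S,D} \to \Out \pi_1(S_D')$, which is injective by the Dehn-Nielsen-Baer theorem for punctured surfaces. The additional ingredient needed is an injection $\Out \pi_1(S_D') \hookrightarrow \OutDer \p(S_D')$ for the free group $F = \pi_1(S_D')$, which is equivalent to showing $N_{F^\un}(F) = F$ (combined with the triviality of the center of $F^\un$). This normalizer computation is the main technical obstacle; once established, composing with $\phi : \cG_{S,D} \to \OutDer \p(S_D')$ from Section~\ref{sec:actions} closes the argument exactly as in the pointed case.
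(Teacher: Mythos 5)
Your reduction to $G = \triv$ is fine, and your treatment of the case $V \ne \emptyset$ is essentially the paper's argument: faithful action on $\pi_1$, Magnus injectivity of $\pi_1$ into its unipotent completion (hence $\Aut\pi_1 \hookrightarrow \Aut\pi_1^\un$), and then factor through the relative completion. The gap is in the case $V = \emptyset$. There you need the injection $\Out\pi_1(S_D') \hookrightarrow \OutDer\p(S_D')$, which you correctly reduce to the normalizer identity $N_{F^\un}(F) = F$ for the free group $F = \pi_1(S_D')$. But that identity is exactly the assertion the paper flags as open at the end of Section~\ref{sec:aut_out}: it remarks that $\Out^+ F_n \to \Out F_n^\un$ would be injective if one knew $F_n^\un \cap \Aut F_n = F_n$ inside $\Aut F_n^\un$, and that \emph{this is not clear}. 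So your second case reduces the proposition to a fact the paper itself declines to claim.

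The paper sidesteps this by choosing the space acted upon more cleverly. After reducing to the case where $D$ consists only of points, it writes $D = D' \cup \{x\}$ and sets $S' = S - D'$, so that $x$ remains in the surface and serves as a genuine base point. Since every element of $\G_{S,D}$ fixes $x$ pointwise, the action on $\pi_1(S', x)$ is through honest automorphisms, not outer ones, and it is faithful. The group $\pi_1(S', x)$ is free when $D' \ne \emptyset$ and a closed surface group when $D = \{x\}$; either way it is residually torsion-free nilpotent, so it injects into its unipotent completion and the argument closes exactly as in your first case. The fix, then, is not to compute a normalizer but to take the base point from $D$ itself so that $\Out$ never enters; this also makes your case split unnecessary.
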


\begin{proof}
It suffices to prove that $T_{S,D}$ injects into $\U_{S,D}$. It also suffices to
prove the case where $D$ consists only of points. Write $D = D'\cup \{x\}$. Set
$S' = S - D'$ and $\pi = \pi_1(S',x)$. Then the natural homomorphism $\G_{S,D}
\to \Aut\pi$ is injective. Since $\pi$ is resdidually torsion free nilpotent,
$\pi \to \pi^\un$ is injective. It follows that $\G_{S,D} \to \Aut \p$ is
injective, where $\p$ is the Lie algebra of $\pi^\un$. The result follows as
this homomorphism factors $\G_{S,D} \to \cG_{S,D} \to \Aut \p$, which forces
$\G_{S,D} \to \cG_{S,D}$ to be injective.
\end{proof}

Denote the Lie algebra of $T_{S,D}^\un$ by $\t_{S,D}$.
Since the natural representations $T_{S,x}^\un \to \Aut \p(S,x)$ and $T_S^\un
\to \Out \p(S)$ factor through $\cG_{S,x} \to \Aut \p(S,x)$ and $\cG_S \to \Out
\p(S)$, Theorem~\ref{thm:central} implies:

\begin{proposition}
\label{prop:tor_ker}
When $g\ge 3$, the natural representations $T_{S,x}^\un \to \Aut \p(S,x)$ and
$T_S^\un \to \Out \p(S)$ have non-trivial kernel. Equivalently, both $\t_{S,x}
\to \Der \p(S,x)$ and $\t_S \to \OutDer\p(S)$ have non-trivial kernel. $\Box$
\end{proposition}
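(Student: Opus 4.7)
The plan is to deduce both assertions directly from Theorem~\ref{thm:central}, using the fact, recalled just above the proposition, that the natural representations factor through the relative completion. More precisely, the composition $T_{S,x}^\un \to \cG_{S,x} \xrightarrow{\phitilde_x} \Aut\p(S,x)$ equals the natural action $T_{S,x}^\un \to \Aut\p(S,x)$, and similarly in the unpointed case with $\phi : \cG_S \to \Out\p(S)$.

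First I would invoke Theorem~\ref{thm:central} with $D = \{x\}$ (respectively $D = \emptyset$, which is still stable when $g\ge 3$). This gives, for $g\ge 3$, an exact sequence
$$
0 \to \Q \to T_{S,x}^\un \to \cG_{S,x} \to \Sp(H) \to 1
$$
(and similarly for $T_S^\un \to \cG_S$). In particular the kernel of $T_{S,x}^\un \to \cG_{S,x}$ is a non-zero central subgroup isomorphic to $\Q$. Composing with $\phitilde_x$, this $\Q$ lies in the kernel of the natural action $T_{S,x}^\un \to \Aut\p(S,x)$, which is therefore non-trivial. The identical argument, replacing $\phitilde_x$ by $\phi$, shows that $\ker\bigl(T_S^\un \to \Out\p(S)\bigr)$ contains a copy of $\Q$ and hence is non-trivial.

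Passing to Lie algebras is then automatic: the logarithm of the central $\Q$ in $T_{S,x}^\un$ (resp.\ $T_S^\un$) is a non-zero central element of $\t_{S,x}$ (resp.\ $\t_S$) that maps to zero in $\Der\p(S,x)$ (resp.\ $\OutDer\p(S)$), so the induced derivations $\t_{S,x} \to \Der\p(S,x)$ and $\t_S \to \OutDer\p(S)$ also have non-trivial kernel.

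There is essentially no obstacle once Theorem~\ref{thm:central} is granted; the only substantive input is the non-vanishing of the central $\Q$ (equivalently, the non-vanishing of the relevant Chern class), which is supplied by Morita's computation \cite{morita} as reinterpreted in \cite{hain-reed}. The factorization through $\cG_{S,x}$ and $\cG_S$, which is the content of Section~\ref{sec:actions}, then turns that non-vanishing statement directly into the assertion of the proposition.
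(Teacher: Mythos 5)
Your argument is correct and is exactly the paper's own: the sentence immediately preceding the proposition records that the natural representations factor through $\cG_{S,x}$ and $\cG_S$, and the non-trivial central $\Q$ in the kernel of $T_{S,D}^\un \to \cG_{S,D}$ supplied by Theorem~\ref{thm:central} is then visibly inherited by the composite, which is why the paper closes the statement with $\Box$. Your final step --- taking the logarithm of the central $\Q$ to obtain a non-zero central element of $\t_{S,x}$ (resp.\ $\t_S$) killed by the induced derivation map --- is the correct way to pass to the infinitesimal formulation.
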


When $g\ge 3$, the only known elements of the kernel of $\t_{S,x} \to
\Der\p(S,x)$ are those in $\ker\{\t_{S,x} \to \u_{S,x}\}$. So it is natural (and
interesting) to ask whether $\u_{S,x} \to \Der \p(S,x)$ is injective when $g\ge
3$. (This homomorphism fails to be injective when $g=1,2$.)

\section{Weight Filtrations on Homology and Cohomology}

The rational cohomology\footnote{Recall Convention~\ref{convention}: all
(co)homology is with rational coefficients unless otherwise noted.} of a complex
algebraic variety $X$ carries a natural filtration
\begin{multline*}
0 = W_0 H^m(X) \subseteq W_1 H^m(X) \subseteq \cdots 
\cr
\cdots \subseteq W_{2m-1}H^m(X)
\subseteq W_{2m} H^m(X) = H^m(X)
\end{multline*}
called the {\em weight filtration}, which was constructed by Deligne using Hodge
theory in \cite{deligne:hodge2,deligne:hodge3}. Weight filtrations can be
constructed Galois actions as well \cite{deligne:weil2}. Algebraic maps between
complex algebraic varieties induce weight filtration preserving maps of their
cohomology \cite{deligne:hodge3}. In particular, $(H^\dot(X),W_\dot)$ is a
filtered algebra. The weight filtration is a powerful tool for studying the
topology of complex algebraic varieties due to its strong exactness properties.
In this section we give a brief introduction to weight filtrations directed at
topologists.  Deligne's paper \cite{deligne:icm} provides a more complete
exposition of the yoga of weight filtrations. Full details can be found in
\cite{deligne:hodge2}.

An integer $k$ is a (non-trivial) weight of $H^m(X)$ if its $k$th weight graded
quotient
$$
\Gr^W_k H^m(X) := W_kH^m(X)/W_{k-1}H^m(X)
$$
is non-zero. We say that $H^m(X)$ is {\em pure} of weight $k$ if $k$ is the only
non-trivial weight of $H^m(X)$. The weights on $H^m(X)$ are $\ge m$ when $X$ is
smooth and $\le m$ when $X$ is compact. So if $X$ is smooth and projective, then
$H^m(X)$ is pure of weight $m$.

Since we are working with fundamental groups, it is more natural to work with
weight filtrations on homology than on cohomology. The weight filtration on
$H_m(X)$ is defined by
$$
W_{-k} H_m(X) = \Hom(H^m(X)/W_{k-1},\Q).
$$
When $X$ is smooth, the weights on $H_m(X)$ lie in $\{-2m,\dots,-m\}$.

\begin{example}
\label{ex:wt-filt}
The weight filtration on the homology of a smooth complex algebraic curve is
determined by the topology of the underlying surface. Suppose that $S$ is a
compact oriented surface and that $D$ is a finite subset. Set $S' = S-D$. Then
one has the exact sequence (the dual of the Gysin sequence):
$$
0 \to \Htilde_0(D) \to H_1(S') \to H_1(S) \to 0.
$$ 
The weight filtration on $H_1(S')$ is given by
$$
W_{-k} H_1(S') =
\begin{cases}
0 & k \ge 3 \cr
\Htilde_0(D) & k = 2 \cr
H_1(S') & k \le 1.
\end{cases}
$$
Note that $\Gr^W_{-1}H_1(S') = H_1(S)$. The weight filtration on $H_0(S')$
and $H_2(S')$ are uninteresting.
\end{example}

Higher dimensional examples with non-trivial weight filtrations can be
constructed by taking products of curves. The weight filtration on the product
of two varieties is the tensor product of the two weight filtrations:
$$
W_k H^n(X\times Y) = \bigoplus_{\ell + m = n} \sum_{i+j=k}
W_i H^\ell(X)\otimes W_jH^m(Y).
$$
This induces an isomorphism
$$
\Gr^W_k H^n(X\times Y) \cong \bigoplus_{\ell + m = n} \sum_{i+j=k}
Gr^W_i H^\ell(X)\otimes \Gr^W_jH^m(Y).
$$

\subsection{Strictness and Exactness Properties}
Weight filtrations that arise from Hodge and/or Galois theory have strong
exactness properties which make them a powerful tool in studying the topology
of algebraic varieties and algebraic maps.

\begin{definition}
A morphism $f (V_1,W_\dot) \to (V_2,W_\dot)$ of filtered vector spaces is {\em
strict} with respect to $W_\dot$ if for all $m\in \Z$
$$
W_m V_2 \cap f(V_1) = f(W_m V_1).
$$
\end{definition}

Suppose that $V$ is a vector space and that $A$ is a subspace
of $V$ and $q : V \to B$ is a quotient. A filtration $W_\dot$ of $V$ induces
one on $A$ and $B$ by restriction and projection, respectively:
$$
W_m A := A \cap W_m V \text{ and } W_m B = q(W_m V).
$$
In particular, we can induce filtrations on the kernel and cokernel of a
filtration preserving map $f : (V_1,W_\dot) \to (V_2, W_\dot)$.

It is easy to check that $f$ is strict with respect to $W_\dot$ if and only
if
$$
0 \to \Gr^W_m \ker f \to \Gr^W_m V_1 \to \Gr^W_m V_2 \to \Gr^W_m \coker f \to 0
$$
is exact for all $m \in \Z$.

Another important property of weight filtrations on cohomology groups of
algebraic varieties, established in \cite{deligne:hodge2}, is that there are
natural (but not canonical) isomorphisms
$$
H^m(X;\C) \cong \bigoplus \Gr^W_k H^m(X;\C)
$$
that are preserved by algebraic maps and which are compatible with tensor
products, cup products, the K\"unneth isomorphism, etc. Establishing the
existence of natural splittings of the weight filtration is the essential
ingredient in establishing the strictness and exactness properties stated
above.

Many other invariants of algebraic varieties and maps (such as the Leray
spectral sequence, Gysin sequences, long exact sequences of a pair) carry
natural weight filtrations, and all of their internal maps (differentials, Gysin
maps, connecting homomorphisms) and all maps induced between them by algebraic
maps preserve the weight filtration (sometimes with a shift) and are strict. The
following example of Deligne \cite{deligne:hodge3} illustrates the basic yoga of
weights and how it can be used to prove a topological result.

\begin{example}[Deligne]
Suppose that $G$ is a connected linear algebraic group over $\C$ and that $X$ is
a smooth complex projective variety. Suppose that $\mu : G\times X \to X$ is an
algebraic action. Deligne \cite{deligne:hodge3} shows that the weights on
$H^k(G)$ are strictly larger than $k$ except when $k=0$. Since $X$ is smooth and
projective, $H^k(X)$ is pure of weight $k$. The mapping
$$
\mu^\ast : H^n(X) \to H^n(G\times X) \cong
\bigoplus_{\ell + m = n} H^\ell(G)\otimes H^m(X)
$$
is thus filtration preserving. Since $H^n(X) = W_n H^n(X)$, strictness implies
that
$$
\im \mu^\ast = \im \mu^\ast \cap W_n H^n(G \times X) = H^0(G) \otimes H^n(X)
$$
from which it follows that $\mu^\ast$ is the inclusion
$$
H^n(X) \cong H^0(G) \otimes H^n(X) \hookrightarrow H^n(G\times X).
$$
That is, rational cohomology cannot distinguish $\mu$ from the trivial action.
\end{example}

\section{Weight Filtrations on Completed Mapping Class Groups}

Completions of mapping class groups have natural weight filtrations that are
preserved by the natural homomorphisms between them. They arise because mapping
class groups occur as fundamental groups of smooth stacks (moduli spaces of
curves) and are constructed using either Hodge theory \cite{hain:torelli} or
Galois theory \cite{hain-matsumoto:mcg}.

Denote the lower central series of a Lie algebra $\h$ by
$$
\h = L^1\h \supseteq L^2 \h \supseteq \h \supseteq \cdots
$$
where $L^{m+1} \h := [\h,L^m\h]$.

\begin{theorem}[Hain \cite{hain:torelli}]
If $(S,D)$ is a stable decorated surface and $G$ is a subgroup of $\Aut D$, then
$\O(\cG^G_{S,D})$ has a natural weight filtration with which the product,
antipode and coproduct are strictly compatible. This corresponds to a weight
filtration
$$
\cdots \subseteq W_{-2}\cG^G_{S,D} \subseteq W_{-1}\cG^G_{S,D} \subseteq
W_0 \cG^G_{S,D} = \cG^G_{S,D}
$$
by subgroups, where $W_{-1}\cG^G_{S,D} = \U_{S,D}$. It also induces a filtration
of the Lie algebra $\g_{S,D}$ of the identity component. It has the property
that $\g_{S,D} = W_0 \g_{S,D}$ and $\u_{S,D} = W_{-1}\g_{S,D}$. The adjoint
action
$$
\g_{S,D} \otimes \O(\cG^G_{S,D}) \to \O(\cG^G_{S,D}),
$$
the bracket $\g_{S,D} \otimes \g_{S,D} \to \g_{S,D}$ and the natural
homomorphisms $\g_{S,\Dtilde} \to \g_{S,D}$ are all strictly compatible with the
weight filtration. When $g \ge 3$ and $\#D = 1$, the weight filtration is
related to the lower central series of $\u_{S,D}$ by
$$
W_{-m}\g_{S,D} = L^m\u_{S,D}.
$$
When $g = 0$ and $m\ge 1$,
$$
W_{-2m+1}\g_{S,D} = W_{-2m}\g_{S,D} = L^m\u_{S,D}.
$$
In particular, when $g=0$, $\g_{S,D} = W_{-2}\g_{S,D}$ and all odd weight graded
quotients of $\g_{S,D}$ are trivial.
\end{theorem}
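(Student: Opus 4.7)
The plan is to construct the weight filtration as the one underlying a natural ind-mixed Hodge structure on $\O(\cG^G_{S,D})$, and then to deduce every stated strictness and naturality property formally from the fact that morphisms in the category of mixed Hodge structures are automatically strict with respect to $W_\dot$. First I would realize $\G^G_{S,D}$ as the orbifold fundamental group of a moduli space $\M^G_{g,D}$ of smooth projective curves with the prescribed decoration data and $G$-symmetry. This moduli space is a smooth Deligne--Mumford stack defined over $\Q$, and the standard representation $\rho : \G^G_{S,D} \to G \times \Sp(H)$ corresponds to the variation of Hodge structure given by the relative $H^1$ of the universal curve (with its polarization and $G$-action). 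The general machinery developed in \cite{hain:torelli} for relative completions of fundamental groups of smooth varieties with respect to a polarized VHS then endows the Hopf algebra $\O(\cG^G_{S,D})$ with a canonical ind-MHS for which product, coproduct, antipode and the adjoint action are MHS morphisms. Taking $W_\dot$ to be its weight filtration and dualizing to the Lie algebra gives the asserted filtration on $\g_{S,D}$, with $W_{-1}\g_{S,D} = \u_{S,D}$ because the semisimple quotient $\sp(H)$ carries only weight $0$ while $\u_{S,D}$ is built from strictly negative weights coming from the non-trivial VHS.

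Second, strictness of the bracket, of the adjoint action, and of the maps $\g_{S,\Dtilde} \to \g_{S,D}$ is then automatic: each of these is a morphism of ind-MHS (the last coming from the algebraic forgetful morphism $\M_{g,\Dtilde} \to \M_{g,D}$), so the general fact that morphisms of mixed Hodge structures are strict for $W_\dot$ furnishes strictness for free. The same formal mechanism gives compatibility of $W_\dot$ with all the natural homomorphisms $\cG^G_{S,\Dtilde} \to \cG^G_{S,D}$ and $\cG_{S,D} \hookrightarrow \cG^G_{S,D}$.

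Third, to identify the weight filtration with the lower central series in the distinguished cases, I would combine the presentation Theorem~\ref{thm:presentation} with a weight computation on low-degree cohomology of $\G^G_{S,D}$ with coefficients in irreducible $R$-modules. Since $H_1(\u_{S,D})$ embeds into $\bigoplus_\alpha H^1(\G^G_{S,D};V_\alpha)^\ast \otimes V_\alpha$ as an MHS, Morita--Johnson type computations (together with Theorem~\ref{thm:central} in genus $\ge 3$) show that $H_1(\u_{S,D})$ is pure of weight $-1$ when $g\ge 3$ and $\#D=1$, and pure of weight $-2$ when $g=0$. Because $W_\dot$ is compatible with the bracket, this forces $L^m\u_{S,D} \subseteq W_{-m}\g_{S,D}$ in the first case (resp.\ $L^m\u_{S,D} \subseteq W_{-2m}\g_{S,D}$ in the second), and the vanishing of odd weight graded quotients in genus zero is immediate from the purity of the generators. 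The reverse inclusions then follow by Corollary~\ref{cor:isom}: one shows that the natural surjection from the free pronilpotent Lie algebra on $\Gr^W_{-1}\u_{S,D}$ (resp.\ $\Gr^W_{-2}\u_{S,D}$) to $\Gr^W_\dot \u_{S,D}$ is an isomorphism by checking it on $H^2$, which by Theorem~\ref{thm:presentation} injects into $\bigoplus_\alpha H^2(\G^G_{S,D};V_\alpha)^\ast \otimes V_\alpha$ and is also pure of the expected weight.

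The main obstacle is this last weight computation on $H^1$ and $H^2$ of the mapping class group with twisted coefficients. The inclusion $L^m \subseteq W_{-m}$ is essentially formal, but the reverse inclusion requires knowing that no ``extra'' MHS-weight enters through the relations, and this reduces to a purity statement for $H^2(\G^G_{S,D};V_\alpha)$ as a sub-MHS of the cohomology of the smooth stack $\M^G_{g,D}$ with coefficients in the corresponding VHS. In genus $g\ge 3$ with $\#D=1$ this purity can be read off from the work of Kabanov, Morita and Looijenga on the stable cohomology of mapping class groups with symplectic coefficients; in genus zero it reduces to the classical Hodge-theoretic analysis of configuration spaces of points on $\P^1$, whose $H^\bullet$ is generated by weight-$2$ classes. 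Assembling these inputs delivers the stated identifications $W_{-m}\g_{S,D} = L^m\u_{S,D}$ and $W_{-2m+1}\g_{S,D} = W_{-2m}\g_{S,D} = L^m \u_{S,D}$.
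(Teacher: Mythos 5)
The paper cites this result from \cite{hain:torelli} without supplying a proof, so there is no ``paper's own proof'' to compare against; your task is really to reconstruct Hain's Hodge-theoretic argument. Your first two paragraphs do this correctly: realizing $\G^G_{S,D}$ as the orbifold fundamental group of a moduli stack, using the relative completion/MHS machinery to put a natural ind-MHS on $\O(\cG^G_{S,D})$, and deriving all strictness statements formally from strictness of MHS morphisms is exactly the right mechanism. The assertion that $W_{-1}\g_{S,D} = \u_{S,D}$ because $\Gr^W_0 = \sp(H)$ is also correct.

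The weight-equals-lower-central-series step is where the proposal goes wrong. First, the purity claim is underjustified in a way that papers over the most delicate point. A ``Morita--Johnson type computation'' gives $H_1(\u_{S,x}) \cong \Lambda^3 H$ as an $\Sp(H_\Z)$-module, but as a sub-Hodge structure of $H^{\otimes 3}$ this has weight $-3$, not $-1$. The fact that the natural MHS on $H_1(\u_{S,x})$ is $\Lambda^3 H(-1)$ --- i.e., that a Tate twist appears --- is a structural consequence of \emph{how} the MHS on the relative completion is built (via limit MHS / boundary behavior of the moduli stack), and is not a representation-theoretic computation at all. Your write-up asserts the conclusion but gives no mechanism for the twist. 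Second, the argument for the reverse inclusion via Corollary~\ref{cor:isom} cannot work as stated: you assert that the surjection from the free pronilpotent Lie algebra on $\Gr^W_{-1}\u_{S,D}$ to $\Gr^W_\dot\u_{S,D}$ ``is an isomorphism,'' which would say $\u_{S,D}$ is free --- contradicted by Theorem~\ref{thm:central} (there is a central $\Q$ in $T^\un_{S,D}$) and by the relations in degree $-2$ established in \cite{hain:torelli}. There is also no need to invoke $H^2$ purity. Once you know $H_1(\u_{S,D})$ is pure of weight $-1$, pick a minimal presentation $\f \twoheadrightarrow \u_{S,D}$; this is a morphism of MHS, hence strict, and since $\f$ is free on a weight-$(-1)$ space one has $W_{-m}\f = L^m\f$. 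Strictness then gives $W_{-m}\u_{S,D} = \mathrm{image}(W_{-m}\f) = L^m\u_{S,D}$ directly, without any appeal to Stallings' criterion or to second cohomology. The genus-zero case works the same way with $H_1$ pure of weight $-2$, and there the identification with $H^1(\M_{0,n})$ makes the purity genuinely elementary.
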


For each subgroup $G$ of $\Aut D$, conjugation induces infinitesimal actions
$$
\ad : \g_{S,D} \to \Der \O(\cG^G_{S,D}) \text{ and }
\ad : \g_{S,D} \to \Der\g_{S,D}.
$$
Since $\Gr^W_0 \g_{S,D} = \g_{S,D}/\u_{S,D} \cong \sp(H)$, we have:

\begin{corollary}
\label{cor:symplectic}
Each $\Gr^W_m\O(\cG^G_{S,D})$ is a direct sum of finite dimensional
$\sp(H)$-modules and each $\Gr^W_m \g_{S,D}$ is a direct product of finite
dimensional $\sp(H)$-modules. $\Box$
\end{corollary}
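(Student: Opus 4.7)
The plan is to read off the statement from the strict compatibility of the adjoint action with $W_\dot$ (supplied by the preceding theorem), together with the reductivity of $\sp(H)$. The key observation is that $W_{-1}\g_{S,D} = \u_{S,D}$, and since $[W_i, W_j] \subseteq W_{i+j}$ on both $\g_{S,D}$ and $\O(\cG^G_{S,D})$, one has $[\u_{S,D}, W_j] \subseteq W_{j-1}$. Hence $\u_{S,D}$ acts as zero on every weight graded quotient, and the induced $\g_{S,D}$-action on each $\Gr^W_m$ descends through $\g_{S,D}/\u_{S,D} = \Gr^W_0\g_{S,D} \cong \sp(H)$.

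To conclude for $\O(\cG^G_{S,D})$, I would use that the coordinate ring of a proalgebraic group is the union of its finite dimensional $\cG^G_{S,D}$-subcomodules, so $\O(\cG^G_{S,D})$ is an ind-object of $\Vec_\Q^\fin$ on which $W_\dot$ restricts compatibly. Consequently each $\Gr^W_m\O(\cG^G_{S,D})$ is the filtered colimit of finite dimensional $\sp(H)$-modules; since $\sp(H)$ is reductive, each finite dimensional piece splits into irreducibles, and the colimit inherits a direct sum decomposition into finite dimensional irreducible $\sp(H)$-modules.

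For $\g_{S,D}$ the argument is dual. The Lie algebra $\g_{S,D}$ is a pro-object: it is the inverse limit of the finite dimensional quotients $\g_{S,D}/W_{-n}\g_{S,D}$, to each of which the weight filtration descends, with finite dimensional graded pieces. On each such quotient, $\Gr^W_m$ decomposes under the $\sp(H)$-action into irreducibles. Passing to the inverse limit realises $\Gr^W_m\g_{S,D}$ as a direct product of finite dimensional $\sp(H)$-modules.

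No serious obstacle arises; the hard content is already packaged into the preceding theorem's strict compatibility assertion. The only care required is routine bookkeeping of which natural topological structure (ind- versus pro-) the graded piece carries on each side, after which reductivity of $\sp(H)$ delivers the decomposition directly.
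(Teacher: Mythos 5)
Your argument is correct and follows the same line of reasoning the paper uses: the adjoint action preserves $W_\dot$, so the induced action on each $\Gr^W_m$ factors through $\g_{S,D}/\u_{S,D} = \Gr^W_0\g_{S,D} \cong \sp(H)$, and reductivity then gives the decomposition. The paper leaves the ind/pro bookkeeping implicit; you have simply spelled it out.
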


These weight filtrations are compatible with those constructed (in
\cite{morgan,hain:dht}) on fundamental groups of algebraic curves and their
configuration spaces:

\begin{theorem}[Morgan, Hain]
If $(S,D)$ is a stable decorated surface, then the Lie algebra $\p$ of the
unipotent completion of the fundamental group of the configuration space of $m$
ordered points in $S_D'$ has a natural weight filtration that satisfies $\p =
W_{-1}\p$. In particular, $\p(S_D',x)$ has a natural weight filtration that
satisfies $\p(S_D',x) = W_{-1}\p(S_D',x)$.  The bracket and the surjection
$\p(S_D',x) \to H_1(S_D')$ are strictly compatible with the weight filtration.
When $\#D \le 1$, the weight filtration of $\p(S_D')$ is given by its lower
central series:
$$
W_{-m}\p(S_D',x) = L^m \p(S_D',x)
$$
when $m\ge 1$.
\end{theorem}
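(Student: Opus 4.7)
The plan is to invoke the mixed Hodge structure (MHS) on the Malcev Lie algebra of the fundamental group of a smooth complex algebraic variety, due to Morgan \cite{morgan} and further developed by Hain \cite{hain:dht}, and apply it to the configuration space $F_m(S_D')$. Since $(S,D)$ is stable, I would regard $S$ as a smooth projective complex curve, so that $S_D'$ is a smooth affine curve and $F_m(S_D') = (S_D')^m \smallsetminus \Delta$ is a smooth quasi-projective variety. Morgan's theorem then equips the Lie algebra $\p$ of $\pi_1(F_m(S_D'),*)^\un$ with a natural MHS, whose weight filtration is the $W_\dot$ of the statement; specialization to $m=1$ gives the statement about $\p(S_D',x)$.

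Granted this MHS, I would deduce the general properties from the standard yoga of weights. The bracket $\p \otimes \p \to \p$ (coming from the Hopf-algebra coproduct on the coordinate ring of the unipotent completion) and the abelianization $\p \to H_1(S_D')$ are both morphisms of mixed Hodge structures, and are therefore strict with respect to $W_\dot$. The vanishing $\p = W_{-1}\p$ would follow from the fact that for a smooth complex variety $X$, the weights on $H_1(X)$ lie in $\{-1,-2\}$ (cf.\ Example~\ref{ex:wt-filt}): since $\p$ is a pronilpotent Lie algebra whose abelianization is $H_1(X)$, and since $[W_{-i},W_{-j}] \subseteq W_{-i-j}$, every weight that appears in $\p$ is a sum of weights in $H_1$, hence is $\le -1$.

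To identify $W_{-m}\p$ with $L^m\p$ when $\#D \le 1$, I would first observe that in both cases ($D=\emptyset$ or $\#D=1$), Example~\ref{ex:wt-filt} gives $\Htilde_0(D) = 0$, so $H_1(S_D')$ is \emph{pure} of weight $-1$. The inclusion $L^m\p \subseteq W_{-m}\p$ is automatic from $\p = W_{-1}\p$ and strict compatibility of the bracket. For the reverse inclusion, the LCS-associated graded $\bigoplus_m L^m\p / L^{m+1}\p$ is generated in degree one by $H_1(S_D')$, pure of weight $-1$; multiplicativity of weights under the bracket then forces $L^m\p / L^{m+1}\p$ to be pure of weight $-m$. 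A short induction on $m$ then yields $W_{-m}\p \subseteq L^m\p$: if $x \in W_{-m}\p$ and, by inductive hypothesis, $x \in L^{m-1}\p$, then its image in the pure-weight-$(-(m-1))$ object $L^{m-1}\p/L^m\p$ lies in its weight-$(-m)$ part, which is zero.

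The principal obstacle is the construction of the mixed Hodge structure on $\p$ itself; that is the substantive Morgan--Hain input and I would invoke it as a black box. Everything else is weight-counting of the type sketched, together with the observation that the smooth quasi-projective structure on $F_m(S_D')$ (and in particular the naturality of Morgan's construction under the maps collapsing marked points) is compatible with the mapping-class-group actions appearing elsewhere in the paper.
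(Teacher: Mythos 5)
The paper does not actually prove this theorem; it is stated with attribution to Morgan and Hain and cited from \cite{morgan,hain:dht}, so there is no in-text argument to compare against. Your reconstruction, however, is the standard one that those references would support, and your weight-counting deductions are correct. A few remarks.

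Your weight-counting argument for the inclusion $W_{-m}\p \subseteq L^m\p$ when $\#D\le 1$ is the right one: $L^{m-1}\p/L^m\p$ is an $\sp(H)$-subquotient of $H_1(S_D')^{\otimes(m-1)}$, which is pure of weight $-(m-1)$ precisely because $\Htilde_0(D)=0$ in that range (for $D=\emptyset$ trivially, and for $\#D=1$ because the reduced homology of a point vanishes), and then strictness of $W_\dot$ gives the descent. The reverse inclusion $L^m\p\subseteq W_{-m}\p$ from $\p=W_{-1}\p$ and multiplicativity of $W_\dot$ under the bracket is likewise fine. The argument that $\p=W_{-1}\p$ is also correct, though it is cleanest to phrase it as you implicitly do: the LCS-graded pieces $L^n\p/L^{n+1}\p$ are subquotients of $H_1^{\otimes n}$ with weights in $\{-2n,\dots,-n\}$, so no weight-$0$ part can survive in $\p$ once one uses strictness of $W_\dot$ to pass from the graded pieces back to $\p$.

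Two small cautions. First, $S_D'$ is not always affine: when $D=\emptyset$ (allowed here, since stability only forces $g\ge 2$) the surface $S_D'=S$ is a smooth projective curve, and $F_m(S)$ is smooth quasi-projective but not affine; Morgan's construction applies to any smooth variety, so this is harmless, but the parenthetical should say "smooth quasi-projective" rather than "smooth affine." Second, the word "natural" in the statement carries real weight: one must know that $W_\dot$ on $\p$ does not depend on the complex structure placed on $S$ and is preserved by the mapping class group action, which is what lets the filtration interact with the rest of the paper. For $\#D\le 1$ this is immediate from your identification $W_{-m}=L^m$, since the lower central series is purely topological; in general one appeals to Hain's result that the MHS on $\p$ varies in an admissible variation over Teichm\"uller space, or alternatively that the weight filtration on $\p$ can be characterised topologically via the relative weight filtration associated to the punctures. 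You flag this last point as a black box, which is an honest and acceptable place to stop for a proposal, but a complete write-up should address it explicitly.
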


The natural action of the $\g_{S,D}$ on the $\p(S_D')$ is compatible with these
weight filtrations.

\begin{theorem}[Hain \cite{hain:torelli}]
If $(S,D)$ is a stable decorated surface and $x$ is an admissible base point of
$S_D'$, then the natural homomorphisms
$$
\g_{S,D\cup\{x\}} \to \Der \p(S_D',x) \text{ and }\g_{S,D} \to \OutDer\p(S_D')
$$
are strictly compatible with the natural weight filtrations.
\end{theorem}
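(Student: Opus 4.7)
The plan is to deduce both strictness assertions from the general principle that a morphism of mixed Hodge structures is automatically strict with respect to the weight filtration. The real content will be to recognize both homomorphisms as morphisms of MHS; once that is done, strictness is formal. Throughout I assume the Hodge-theoretic construction of the weight filtrations on the completions and on $\p(S_D',x)$, as in \cite{hain:torelli}, as the paper already does.

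First I would unwind the definitions. The weight filtration on $\Der \p(S_D',x)$ is the one induced from $\p(S_D',x)$ by the general recipe of Section 2, so
$$
W_m \Der \p(S_D',x) = \{\delta : \delta(W_k \p(S_D',x)) \subseteq W_{m+k}\p(S_D',x)\text{ for all }k\},
$$
and analogously for $\OutDer \p(S_D')$. Strictness of the natural maps therefore says two things simultaneously: (a) the action preserves the weight filtration (which is already essentially in the setup), and (b) if an element of $\g_{S,D\cup\{x\}}$ acts on $\p(S_D',x)$ by a derivation that shifts weight by $\le m$, then it can be written as a sum of an element in $W_m \g_{S,D\cup\{x\}}$ and an element acting as zero.

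Second, I would realize the homomorphism geometrically. The group $\G_{S,D\cup\{x\}}$ is the orbifold fundamental group of the moduli space $\M_{S,D\cup\{x\}}$, and the universal curve over $\M_{S,D\cup\{x\}}$ with its marked points and tangent vectors removed from the fibers is a smooth family whose fiber over the basepoint is $S_D'$ with distinguished base point $x$. The Malcev Lie algebra $\p$ of $\pi_1$ of the fiber, with its weight filtration, forms a pro-admissible variation of mixed Hodge structure over $\M_{S,D\cup\{x\}}$; the homomorphism $\thetatilde_x:\G_{S,D\cup\{x\}}\to \Aut\p(S_D',x)$ is the monodromy of this variation. Passing to the relative completion $\cG_{S,D\cup\{x\}}$ via the universal mapping property, and then differentiating, yields $d\phitilde_x:\g_{S,D\cup\{x\}}\to \Der\p(S_D',x)$.

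Third, the key input is that this infinitesimal monodromy is a morphism of mixed Hodge structures. Both $\g_{S,D\cup\{x\}}$ and $\p(S_D',x)$ acquire compatible MHS from the same geometric setup: the weight filtration on $\g_{S,D\cup\{x\}}$ is built in \cite{hain:torelli} precisely so that monodromy actions on fibers of admissible VMHS over moduli spaces of curves are morphisms of MHS, and the weight filtration on $\Der\p(S_D',x)$ induced as above is the one coming from the induced MHS on $\Der\p(S_D',x)$. Hence $d\phitilde_x$ is a morphism of MHS. The same argument, applied to the universal curve with $x$ forgotten as a decoration on the base, produces $d\phi:\g_{S,D}\to\OutDer\p(S_D')$ as a morphism of MHS on $\OutDer\p(S_D')$ (the outer derivation module inherits an MHS from the exact sequence defining it).

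Fourth, morphisms of mixed Hodge structures are strict with respect to $W_\dot$ by Deligne \cite{deligne:hodge2}; both assertions follow at once. The main obstacle is not the last step but the identification of $d\phitilde_x$ and $d\phi$ with morphisms in the category of (pro-)MHS, i.e.\ checking that the Hodge-theoretic construction of the weight filtration on $\cG_{S,D\cup\{x\}}$ is functorial with respect to the forgetful morphisms of moduli of curves and compatible with the variation of MHS attached to the universal curve. This functoriality is already built into the construction of \cite{hain:torelli}, so the theorem reduces to an application of the general strictness of MHS morphisms.
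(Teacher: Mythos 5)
The paper does not prove this theorem; it states it as a citation to \cite{hain:torelli}, so there is no in-paper proof to compare against. Your proposal correctly reproduces the strategy used in that reference and in \cite{hain:malcev}: realize $\g_{S,D\cup\{x\}}$ and $\p(S_D',x)$ as pro-mixed Hodge structures arising from the Hodge theory of the moduli space and its universal curve, recognize the infinitesimal monodromy maps $d\phitilde_x$ and $d\phi$ as morphisms of MHS, and invoke Deligne's theorem that morphisms of MHS are automatically strict for $W_\dot$. You also correctly identify where the genuine work lies --- establishing that the MHS on the relative completion $\g_{S,D\cup\{x\}}$ is compatible with the admissible VMHS of fundamental-group Lie algebras over $\M_{g,n}$, so that $d\phitilde_x$ and $d\phi$ really are morphisms of pro-MHS --- and defer it to \cite{hain:torelli}; this is consistent with the paper, which explicitly says it suppresses the Hodge/Galois-theoretic machinery underlying these strictness claims.
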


For all stable decorated surfaces $(S,D)$, the weight filtrations on
$$
\g_{S,D},\ \p(S_D',x),\ \Der \p(S_D',x),\ \OutDer\p(S_D')
$$
all have natural splittings.\footnote{In Hodge theory, one usually tensors with
$\C$ first to construct these splittings. However, the machinery of tannakian
categories implies the existence of such splittings over $\Q$. Cf.\
\cite{deligne:hodge2,morgan}} That is, if $\g$ is such a Lie algebra, then there
is a natural isomorphism of complete Lie algebras
$$
\g \cong \prod_m \Gr^W_m \g,
$$
and if $\phi : \g \to \h$ is a natural homomorphism 
between two such Lie algebras, then the diagram
$$
\xymatrix{
\g \ar[r]^(0.3)\simeq\ar[d]_\phi & \prod_m \Gr^W_m \g \ar[d]^{\Gr^W_\dot \phi}
\cr
\h \ar[r]^(0.3)\simeq & \prod_m \Gr^W_m \h
}
$$
commutes.\footnote{Examples of morphisms $\phi : \g \to \h$ that are strictly
compatible with the weight filtration are those which are induced by morphisms
of moduli spaces of curves or are associated with monodromy representations of
fundamental groups of moduli spaces of curves  associated to natural local
systems over moduli spaces such as those associated to families of unipotent
completions of fundamental groups of universal curves and other tautological
bundles.}

The existence of natural splittings allows one to study, {\em without loss of
information}, the infinitesimal actions
$$
d\phitilde_x : \u_{S,\Dtilde} \to \Der \p(S_D',x)
\text{ and }
d\phi : \u_{S,D} \to \OutDer \p(S_D')
$$
using the associated graded actions
$$
\Gr^W_\dot\u_{S,\Dtilde} \to \Der \Gr^W_\dot \p(S_D',x)
\text{ and }
\Gr^W_\dot \u_{S,D} \to \OutDer \Gr^W_\dot\p(S_D').
$$
It also allows us to construct presentations of $\u_{S,D}$ by giving
presentations to their associated weight graded quotients as was done in
\cite{hain:torelli} for the $\u_S$ when $g\ge 6$.

\begin{remark}
One might hope that there are natural weight filtrations on the Lie algebras
$\f_n$, $\a_n$ and $\o_n$ associated to $\Aut^+F_n$ and $\Out^+F_n$ with respect
to which the natural actions $\a_n \to \Der \f_n$ and $\o_n \to \OutDer \f_n$
are strict and for which each Lie algebra is naturally isomorphic to its
associated graded. This would simplify the problem of finding presentations of
$\ia_n$ and $\io_n$.

Such weight filtrations would probably exist if $\Aut F_n$ or $\Out F_n$ were
the fundamental group of an algebraic variety or stack defined over a number
field, or if one could construct an action of the Galois group of (say)
$\Q(\bmu_n)$, where $\bmu_n$ denotes the $n$th roots of unity, on their
profinite completions that was compatible with the action of the Galois group on
the profinite completion of $\pi_1(\bA^1 - \bmu_n,0)$. It is not clear how to
proceed, or if this could ever be true.
\end{remark}

\section{The Relative Weight Filtration of a Nilpotent Endomorphism}

This section is an exposition of the linear algebra of nilpotent endomorphisms
of filtered vector spaces, which arises naturally in the study of degenerations
of complex algebraic varieties. For example, suppose that
$$
f : X \to \Delta
$$
is a family of complex algebraic varieties over the unit disk that is
topologically locally trivial over the punctured disk $\Delta^\ast$. Denote the
fiber of $f$ over $t \in \Delta$ by $X_t$. Fix a base point $t_o \in
\Delta^\ast$. Since the family is locally topologically trivial over
$\Delta^\ast$, there is a monodromy operator\footnote{Recall
Convention~\ref{convention}: all (co)homology is with rational coefficients
unless otherwise noted.}
$$
h : H^m(X_{t_o}) \to H^m(X_{t_o})
$$
for each $m \in \N$. A general result of Griffiths-Landman-Grothendieck (Cf.\
\cite{katz,landman}) implies that the eigenvalues of $h$ are roots of unity. So,
by replacing the family by its pullback along a finite covering $\Delta \to
\Delta, s \mapsto s^c$ if necessary, we may assume that $h$ is unipotent (i.e.,
all of its eigenvalues are 1). In this case it is the exponential of a nilpotent
matrix $N = \log h$.

\begin{example}
\label{ex:classical}
A classical and relevant example occurs when the fiber $X_t$ over $t \in
\Delta^\ast$ is a compact Riemann surface and the central fiber $X_0$ is
obtained from $S = X_{t_o}$ by contracting a a finite set of disjoint simple
closed curves (the {\em vanishing cycles}) $\{c_1,c_2,\dots,c_r\}$ in $S$. The
geometric monodromy $\tau$ is the product of the Dehn twists about the $c_j$.
The induced mapping
$$
h : H_1(S) \to H_1(S)
$$
is given by the {\em Picard-Lefschetz formula}:
$$
h(x) = x + \sum_{j=1}^r \langle c_j,x \rangle c_j.
$$
This is clearly unipotent. Its logarithm $N : H_1(S) \to H_1(S)$ is
the operator 
$$
N : x \mapsto \sum_{j=1}^r \langle c_j,x \rangle c_j
$$
which satisfies $N^2 = 0$. (This formula is independent of the orientations
assigned to the $c_j$.) $\Box$
\end{example}

\subsection{The Weight Filtration of a Nilpotent Endomorphism}

There is a natural weight filtration of a vector space associated to a nilpotent
endomorphism $N$ of it.

\begin{proposition}
\label{prop:wt-filt}
If $N$ is a nilpotent endomorphism of a finite dimensional vector space $V$,
then there is a unique filtration
\begin{multline*}
0 = W(N)_{-m-1} \subseteq W(N)_{-m} \subseteq W(N)_{-m+1} \subseteq \cdots
\cr
\dots \subseteq W(N)_{m-1} \subseteq W(N)_m = V
\end{multline*}
of $V$ such that
\begin{enumerate}

\item for all $n\in \Z$, $NW(N)_n \subseteq W(N)_{n-2}$;

\item for each $k\in \Z$, 
$$
N^k : \Gr_k^{W(N)} V \to \Gr_{-k}^{W(N)} V
$$
is an isomorphism.

\end{enumerate}
\end{proposition}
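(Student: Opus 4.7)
The plan is to prove existence by an explicit construction using the Jordan decomposition of $N$, and then deduce uniqueness by a short inductive argument exploiting the rigidity imposed by condition (ii).

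For existence, I would invoke the Jordan normal form for the nilpotent endomorphism $N$. This decomposes $V$ as a direct sum $V = \bigoplus_\alpha V_\alpha$ of cyclic $N$-modules, where each $V_\alpha$ admits a basis $v_\alpha, N v_\alpha, N^2 v_\alpha, \dots, N^{\ell_\alpha} v_\alpha$ with $N^{\ell_\alpha + 1} v_\alpha = 0$. On the summand $V_\alpha$, assign the weight $\ell_\alpha - 2j$ to the basis vector $N^j v_\alpha$, and let $W(N)_k V_\alpha$ be the span of those basis vectors of weight $\le k$. Setting $W(N)_k V := \bigoplus_\alpha W(N)_k V_\alpha$ yields a filtration with $W(N)_{-m-1} = 0$ and $W(N)_m = V$ where $m = \max_\alpha \ell_\alpha$. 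Condition (i) is immediate, since $N$ lowers the weight of each basis vector by $2$. For (ii), on each $V_\alpha$ the graded piece $\Gr_k^{W(N)} V_\alpha$ is either zero or one-dimensional, spanned by $N^{(\ell_\alpha - k)/2} v_\alpha$ when $|k| \le \ell_\alpha$ and $k \equiv \ell_\alpha \pmod{2}$; on such a vector, $N^k$ visibly acts as an isomorphism onto the weight $-k$ basis vector.

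For uniqueness, suppose $W_\bullet$ and $W'_\bullet$ are two filtrations satisfying (i) and (ii). I would argue by induction on the nilpotency index $m$ (the least integer with $N^{m+1} = 0$). The outer indices are forced: (ii) implies $N^m \colon \Gr_m^W V \to \Gr_{-m}^W V$ is an isomorphism, so $\Gr_m^W V \ne 0$, forcing $W_m = V$ and $W_{m-1} \ne V$; similarly $W_{-m-1} = 0$ and $W_{-m} \ne 0$. The primitive subspace $P := \ker(N^{m+1}) / \ker(N^m)$--the top-weight Jordan layer--is canonically identified with $\Gr_m^W V$ via the quotient $W_m \to \Gr_m^W V$ (both for $W$ and for $W'$), and the isomorphism $N^m$ of (ii) identifies its image $N^m V$ with $\Gr_{-m}^W V \subset V / W_{-m-1} = V$. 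This pins down $W_{-m}$, and then by subtracting off a Jordan-block complement of $N^m V$ one reduces to a subquotient on which $N$ has nilpotency index strictly smaller and the induced filtrations still satisfy (i) and (ii); the inductive hypothesis finishes the argument.

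The main obstacle is the uniqueness step, specifically the bookkeeping required to peel off the top Jordan layer and verify that the residual filtrations on a chosen complement genuinely satisfy (i) and (ii) with nilpotency index $m - 1$ (or $m - 2$). A cleaner alternative, which I would fall back on if the inductive extraction of complements becomes awkward, is to invoke the Jacobson--Morozov theorem to extend $N$ to an $\mathfrak{sl}_2$-triple $(N, H, N^+)$ acting on $V$; then conditions (i) and (ii) together characterize $W(N)_k$ as $\bigoplus_{j \le k} V_j$, where $V_j$ is the $j$-eigenspace of $H$, and both existence and uniqueness become restatements of standard facts about finite-dimensional $\mathfrak{sl}_2$-representations in characteristic zero.
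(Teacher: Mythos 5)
Your existence argument via Jordan blocks is exactly the paper's: decompose into cyclic $N$-modules, assign weight $\ell_\alpha - 2j$ to $N^j v_\alpha$, and check (i) and (ii) block by block. No issues there.

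The gap is in uniqueness. You correctly observe that (i) and (ii) force $W_m = V$, $W_{-m-1} = 0$, $W_{m-1} = \ker N^m$, $W_{-m} = \im N^m$ (though the ``canonical identification'' of $P$ with $\Gr^W_m V$ that you assert is actually what needs proving; it follows because $W_{m-1} \subseteq \ker N^m$ by (i) and the surjection $V/W_{m-1} \twoheadrightarrow V/\ker N^m$ must be injective since $N^m$ is injective on $\Gr_m^W V$ by (ii)). But your next step, ``subtracting off a Jordan-block complement of $N^m V$,'' is exactly the move the paper avoids: the induction should go through the \emph{canonical subquotient} $V' := W_{m-1}/W_{-m} = \ker N^m / \im N^m$ with its induced endomorphism $\bar N$, which satisfies $\bar N^m = 0$ and hence has strictly smaller nilpotency exponent. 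Any $W_\bullet$ satisfying (i), (ii) induces on $V'$ a filtration satisfying (i), (ii) for $\bar N$; induction gives uniqueness on $V'$, and then $W_k$ for $-m < k < m$ is forced to be the preimage in $W_{m-1}$ of the weight filtration on $V'$. No choice of complement is ever made, so the bookkeeping you worry about disappears. Your Jacobson--Morozov fallback is a clean route to \emph{existence}, but it does not give uniqueness for free: you assert that (i) and (ii) ``characterize $W(N)_k$ as $\bigoplus_{j\le k} V_j$,'' but an arbitrary filtration satisfying (i) and (ii) is not a priori compatible with the chosen $\mathfrak{sl}_2$-decomposition, so this claim requires essentially the same recursion (or the explicit formula for $W_k$ in terms of $\ker N^a \cap \im N^b$) and is not a ``restatement of standard $\mathfrak{sl}_2$ facts.''
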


The filtration $W(N)_\dot$ of $V$ is called the {\em weight filtration of $N$}.

\begin{proof} To prove existence , it is enough to consider the case where $N$
has a single Jordan block. There is a basis
$$
e_{-m}, e_{-m+2}, e_{-m+4}, \dots, e_{m-2}, e_m
$$
of $V$ such that $N e_j = e_{j-2}$. Define
$$
W(N)_j = \Span\{e_k : k \le j\}.
$$

Uniqueness is proved by induction on the exponent of nilpotency of $N$. If
$N=0$, then uniqueness is clear. Suppose that $m>0$ and that $N^{m+1} = 0$, but
that $N^m \neq 0$. The vanishing of $N^{m+1}$ implies that
$$
W(N)_k = V \text{ and } W(N)_{-k-1} = 0 \text{ for all } k \ge m.
$$
Since $N^m : \Gr_m V \to \Gr_{-m}V$ is an isomorphism, we must have
$$
W(N)_{m-1} = \ker N^m \text{ and } W(N)_{-m} = \im N^m.
$$
Since $N^m \neq 0$,
$$
0 \neq  W(N)_{-m} \subseteq W(N)_{m-1} \neq V.
$$
The induced endomorphism $\bar{N}$ of $V' := W(N)_{m-1}/W(N)_{-m}$ satisfies
$\bar{N}^m = 0$. By induction, the weight filtration of $W(\bar{N})$ of
$\bar{N}$ is unique. All weight filtrations of $V$ must satisfy
$$
W(N)_k = \text{ inverse image of } W(\bar{N})_k \text{ whenever } -m < k < m.
$$
Uniqueness follows.
\end{proof}

Note that $W(N)_\dot$ is centered at $0$.
If $V = H^m(X)$, where $X$ is a smooth projective variety, it is natural
to reindex the weight filtration of a nilpotent endomorphism $N$ of $V$ so that
it is centered at the weight $m$ of $V$. The shifted filtration
$$
M_k V := W(N)_{k-m}
$$
is centered at $m$. The reindexed filtration $M_\dot$ satisfies $N M_k \subseteq
M_{k-2}$ and
$$
\xymatrix{
N^k : \Gr_{m+k}^M V \ar[r]^\simeq & \Gr_{m-k}^M V
}
$$
is an isomorphism for all $k \in \Z$. We will call the shifted weight filtration
$M_\dot$ the {monodromy weight filtration} of $N:V\to V$.

\begin{example}
\label{ex:classical_ctd}
The monodromy weight filtration for nilpotent endomorphism $N$ of the weight
$-1$ vector space $H_1(S)$ in Example~\ref{ex:classical} is:
\begin{align}
M_0 &= W(N)_1 = H_1(S) \cr
M_{-1} &= W(N)_0 = \ker N \cr
M_{-2} &= W(N)_{-1} = \im N = \Span\{c_1,\dots,c_r\}\cr
M_{-3} &= W(N)_{-2} = 0.
\end{align}
\end{example}

The existence of a weight filtration extends to arbitrary direct sums and direct
products of nilpotent $N$-modules.

\subsection{Curve Systems}
\label{sec:curve_sys}
A {\em curve system} on a stable decorated surface $(S,D)$ is a set
$$
\gamma = \{c_0,\dots,c_r\}
$$
of disjoint simple closed curves such that each connected component of
$$
S_D' - |\gamma| := S'_D - \bigcup_{j=1}^r c_j
$$
has negative Euler characteristic. Equivalently, no two $c_j$ are isotopic in
$S_D'$ and no $c_j$ bounds a disk or punctured disk in $S'_D$. Two curve systems
are considered to be equal if they are isotopic in $S$.

Denote the Dehn twist about $c_j$ by $\tau_j$. Since the $c_j$ are disjoint,
these commute. Set $\tau = \prod_j \tau_j$. The action
$$
\tau_\ast : H_1(S_D') \to H_1(S_D')
$$
of $\tau$ on $H_1(S_D')$ is given by the {\em Picard-Lefschetz formula}:
$$
\tau_\ast(x) = x + \sum_{j=0}^r \langle c_j,x \rangle c_j.
$$
This is clearly unipotent. Its logarithm $N_\gamma : H_1(S_D') \to H_1(S_D')$ is
the operator $\tau_\ast - \id$ which is given by
$$
N_\gamma : x \mapsto \sum_{j=1}^r \langle c_j,x \rangle c_j.
$$
It satisfies $N_\gamma^2 = 0$. Note that it preserves the weight filtration
$W_\dot$ defined on $H_1(S_D')$ in Example~\ref{ex:wt-filt} and acts trivially
on $W_{-2}H_1(S_D')$.

The following example will be used later in the paper.

\begin{example}
\label{ex:wt_filt}
Suppose that $H = H_1(S)$ and $N = N_\gamma$, where $H$ has weight $-1$. Denote
the corresponding monodromy weight filtration by $M_\dot$. Set
$$
A = \Gr^M_0 H,\ H_0 = \Gr^M_{-1} H,\ B = \Gr^M_{-2} H.
$$
There is a natural isomorphism $\sp(H) \cong S^2 H$, which we consider to have
weight 0 as it is a subspace of $\End(H)$, which has weight 0. Note that
$$
\Gr^M_\dot \sp(H) = \sp(\Gr^M_\dot H)
$$
and that there is a natural Lie algebra isomorphism
$$
\Gr^M_0 \sp(H) \cong \gl(A) \oplus \sp(H_0).
$$
Denote by $\xi$ the element of $\sp(\Gr^M_\dot H)$ that corresponds to the
identity element of $\gl(A)$. Note that $\xi$ acts as the identity on $A$,
minus the identity on $B$ and trivially on $H_0$. It follows that if we consider
$H^{\otimes n}$ to have weight $-n$, then $\xi$ acts on $\Gr^M_k H^{\otimes n}$
as multiplication by $k-n$. It follows that if $V$ is an $\sp(H)$-submodule of
$H^{\otimes n}$, then $\xi$ acts on $\Gr^M_k V$ as multiplication by $k-n$.
\end{example}

\subsection{The Weight Filtration of a Nilpotent Endomorphism of a Filtered
Vector Space}

Now suppose that $N$ is a nilpotent endomorphism of a filtered finite
dimensional vector space $V$. That is, $V$ has a filtration
$$
0 \subseteq \dots \subseteq 
W_{m-1}V \subseteq W_m V \subseteq W_{m+1}V \subseteq \dots \subseteq V
$$
which is stable under $N$. This is extended to the infinite dimensional case
using the conventions of Section~\ref{sec:inf_dim}. Namely, infinite dimensional
examples are either ind- or pro-objects of the category of finite dimensional
filtered vector spaces; the nilpotent endomorphism $N$ is replaced by a locally
nilpotent endomorphism (i.e., a direct limit of nilpotent endomorphisms) in the
ind case and a pronilpotent endomorphism (i.e., an inverse limit of nilpotent
endomorphisms) in the pro case.

We will often call the filtration $W_\dot$ of $V$ the {\em weight filtration} of
$V$ and $\Gr^W_m V$ the $m$th weight graded quotient of $V$.

Natural examples of a filtered vector space $(V,W_\dot)$ with a nilpotent
endomorphism arise from degenerations of smooth (not necessarily compact)
varieties. In this case $(V,W_\dot)$ is $H^m(X_t)$ endowed with its natural
weight filtration, and $N$ is the logarithm of the unipotent part of the
monodromy operator.

Since $N$ preserves the weight filtration, it induces an endomorphism
$$
N_m := \Gr^W_m N : \Gr^W_m V \to \Gr^W_m V.
$$
of the $m$th weight graded quotient of $V$. Since, by assumption, $\Gr^W_m V$ is
the product or sum of nilpotent $N$-modules, Proposition~\ref{prop:wt-filt}
implies that each graded quotient has a weight filtration $W(N_m)$. The
reindexed filtration $W(N_m)[m]_\dot$ is centered at $m$. Denote it by
$M^{(m)}_\dot$

\begin{definition}
A filtration $M_\dot$ of $V$ is called a {\em relative weight filtration} of $N
: (V,W_\dot) \to (V,W_\dot)$  if
\begin{enumerate}

\item for each $k \in \Z$, $NM_k \subseteq M_{k-2}$;

\item the filtration induced by $M_\dot$ on $\Gr^W_m V$ is the reindexed weight
filtration $M^{(m)}_\dot$.

\end{enumerate}
\end{definition}

Relative weight filtrations, if they exist, are unique. (Cf.\
\cite{steenbrink-zucker}).

\begin{example}
\label{ex:trivial}
If $N : (V,W_\dot) \to (V,W_\dot)$ satisfies $N(W_mV)\subseteq W_{m-2}V$ for all
$m\in \Z$, then each $N_m = 0$ and the relative weight flirtation $M_\dot$ of
$N$ exists and equals the original weight filtration $W_\dot$.
\end{example}

\begin{example}
Suppose that $\gamma$ is a curve system on a stable decorated surface $(S,D)$.
Take $V=H_1(S_D')$ with the weight filtration defined in
Example~\ref{ex:wt-filt} and $N$ to be the nilpotent endomorphism $N_\gamma$
associated to $\gamma$ defined in Section~\ref{sec:curve_sys}. The non-trivial
weight graded quotients of $H_1(S_D')$ are
$$
\Gr^W_{-1}H_1(S_D') \cong H_1(S) \text{ and }
\Gr^W_{-2}H_1(S_D') \cong \Htilde_0(D).
$$
Note that $N_{-1}: H_1(S) \to H_1(S)$ is the operator given in
Example~\ref{ex:classical}. Consequently
$M^{(-1)}_\dot$ is given by Example~\ref{ex:classical_ctd}:
$$
M^{(-1)}_{-2}\! H_1(S) = \im N_{-1}, M^{(-1)}_{-1}\! H_1(S) = \ker N_{-1},
M^{(-1)}_{0}\! H_1(S) = H_1(S).
$$
Since $N_{-2} = 0$,
$$
0 = M^{(-2)}_{-3} \subseteq M^{(-2)}_{-2} = \Htilde_0(D).
$$
The relative weight filtration of $N_\gamma : H_1(S_D') \to H_1(S_D')$ exists.
It is defined by
$$
M_{-3} =0, M_{-2} = \im N_\gamma + \Htilde_0(D),
M_{-1} = \ker N_\gamma + \Htilde_0(D),
M_{0} = H_1(S_D').
$$
\end{example}

Even though the weight filtration of a nilpotent endomorphism of a finite
dimensional vector space always exists, the relative weight filtration of a
nilpotent endomorphism of a {\em filtered} vector space $(V,W_\dot)$ usually
does not exist. Necessary and sufficient conditions for the existence of
a relative weight filtration are given in \cite{steenbrink-zucker}.

The existence of a relative weight filtration on the rational cohomology of the
general fiber of a degeneration of complex algebraic varieties was first
established for degenerations of varieties by Deligne
\cite[(1.8)]{deligne:weil2} using $\ell$-adic methods, and for smooth varieties
over the complex numbers using Hodge theory by Steenbrink and Zucker
\cite{steenbrink-zucker}. It provides non-trivial restrictions on the possible
monodromy operators of degenerations of algebraic varieties. For example, the
existence relative weight filtration is a strong enough invariant to show that a
bounding pair (BP) map cannot be the geometric monodromy of a degeneration of
complex algebraic curves:\footnote{This can be proved by elementary and direct
arguments. However, using the non-existence of the relative weight filtration to
prove that a BP map cannot be the geometric monodromy of degeneration of curves
illustrates the kinds of restrictions that the existence of relative weight
filtrations places on the monodromy of degenerations of varieties in general.}

\begin{example}
Suppose that $g(S) \ge 1$ and that the curve system $\{c_0,c_1\}$ is a bounding
pair of simple closed curves in $S$. (That is, $S-|\gamma|$ has two connected
components.) Suppose that $P=\{x_0,x_1\}$ is a pair of points in $S-|\gamma|$,
one in each component. Denote the Dehn twist about $c_j$ by $\tau_j$. The
associated bounding pair map is $\tau = \tau_1 \tau_0^{-1}$. It acts
non-trivially and unipotently on $H_1(S_D')$. Its logarithm $N = \tau_\ast -
\id$ acts trivially on both weight graded quotients of $H_1(S_D')$. Because of
this, the relative weight filtration $M_\dot$, if it exists, must agree with the
weight filtration $W_\dot$. But since these satisfy
\begin{align*}
M_{-3} H_1(S_D') &= W_{-3} H_1(S_D') = 0 \text{ and }\cr
M_{-1} H_1(S_D') &= W_{-1} H_1(S_D') = H_1(S_D'),
\end{align*}
the condition $NM_{-1} \subseteq M_{-3}$ implies that $N=0$. But this
contradicts the non-triviality of $\tau_\ast$. Consequently, the endomorphism
$N$ of  $(H_1(S_D'),W_\dot)$ has no relative weight filtration.
\end{example}

\section{Relative Weight Filtrations on Mapping Class Groups}

An element $\sigma$ of a proalgebraic group $\cG$ is {\em prounipotent} if it
lies in a prounipotent subgroup. An element $N$ of a pro-Lie algebra $\g$ is
{\em pronilpotent} if it lies in a pronilpotent subalgebra.

\begin{lemma}
Each prounipotent element of a proalgebraic group $\cG$ can be written uniquely
as the exponential of a pronilpotent element of $\g$, the Lie algebra of $\cG$.
\end{lemma}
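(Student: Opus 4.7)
The plan is to reduce the statement to the classical Jordan decomposition for algebraic groups over a field of characteristic zero and pass to the inverse limit.

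Write $\cG = \varprojlim_\alpha G_\alpha$ as a cofiltered inverse limit of affine algebraic $F$-groups, and correspondingly $\g = \varprojlim_\alpha \g_\alpha$. Let $\sigma \in \cG$ be prounipotent, and fix a prounipotent subgroup $\U \subseteq \cG$ containing $\sigma$. For each $\alpha$, let $U_\alpha$ denote the Zariski closure of the image of $\U$ in $G_\alpha$; as an algebraic quotient of a prounipotent group, $U_\alpha$ is a unipotent algebraic subgroup of $G_\alpha$. Hence the image $\sigma_\alpha$ of $\sigma$ in $G_\alpha$ is a unipotent element of $G_\alpha$.

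By the classical Jordan decomposition, for each $\alpha$ there is a unique nilpotent element $N_\alpha \in \g_\alpha$ with $\sigma_\alpha = \exp N_\alpha$, namely $N_\alpha = \log \sigma_\alpha$ computed in the unipotent algebraic group $U_\alpha$ (where $\exp : \u_\alpha \to U_\alpha$ is a polynomial bijection, as recalled earlier in the paper). Since the transition maps $G_\beta \to G_\alpha$ send unipotent elements to unipotent elements and intertwine the respective exponential maps, uniqueness of $N_\alpha$ forces the collection $\{N_\alpha\}$ to be compatible. They therefore assemble to an element $N := \varprojlim N_\alpha \in \g$. By construction $N$ lies in the pronilpotent Lie algebra $\u$ of $\U$ (concretely, $N$ is the image of $\log \sigma \in \u$ under the inclusion $\u \hookrightarrow \g$), so $N$ is pronilpotent, and $\exp N = \sigma$ in $\cG$.

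For uniqueness, suppose $\sigma = \exp N'$ for some pronilpotent $N' \in \g$. Then $N'$ lies in a pronilpotent subalgebra $\u'$ corresponding to a prounipotent subgroup $\U'$ of $\cG$, and projecting to each $G_\alpha$ gives $\sigma_\alpha = \exp N'_\alpha$ with $N'_\alpha$ a nilpotent element of $\g_\alpha$. Uniqueness of the Jordan decomposition of $\sigma_\alpha$ in $G_\alpha$ forces $N'_\alpha = N_\alpha$ for every $\alpha$, and hence $N' = N$ in $\g$.

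The only conceptually non-trivial point is showing that the logarithm extracted inside a chosen prounipotent subgroup $\U$ does not depend on the choice of $\U$; this is precisely what the uniqueness of the classical Jordan decomposition in each finite-dimensional quotient $G_\alpha$ provides, and it is the step that carries the entire argument.
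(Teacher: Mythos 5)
Your proof is correct, but it takes a genuinely different route from the paper. You reduce to the finite-dimensional case: write $\cG = \varprojlim_\alpha G_\alpha$, observe that each projection $\sigma_\alpha$ is a unipotent element of the algebraic group $G_\alpha$, invoke the uniqueness of the nilpotent logarithm in a linear algebraic group over a characteristic-zero field (your appeal to the ``Jordan decomposition'' is really this sharper statement: a unipotent $g$ has exactly one nilpotent $N$ with $\exp N = g$, since $N$ must commute with $g$ and the polynomial identity $\log\exp = \mathrm{id}$ on nilpotents then pins it down), and then pass to the limit using compatibility of the $N_\alpha$ under the transition maps. The paper instead argues intrinsically in the pro-group: for existence it uses the fact that $\sigma$ lies in a prounipotent subgroup, and for uniqueness it considers the two one-parameter subgroups $\{\exp tN_j : t \in F\}$; since the intersection of prounipotent subgroups is prounipotent and contains $\sigma$, and since in characteristic zero a copy of $\mathbb{G}_a$ has no proper nontrivial algebraic subgroups, the two one-parameter subgroups coincide when $\sigma \neq 1$, whence $N_1 = N_2$. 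Your argument is more pedestrian and offloads the work onto a well-known finite-dimensional fact, which makes the verification routine; the paper's is shorter and avoids choosing an inverse-system presentation, at the price of a slicker group-theoretic observation. Both hinge on characteristic zero in the same essential places.
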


\begin{proof}
Suppose that $\tau$ is a prounipotent element of $\cG$. The existence of a
pronilpotent logarithm of $\tau$ is clear as it lies in a prounipotent subgroup.
Since every algebraic subgroup of a prounipotent group is prounipotent, the
intersection of two prounipotent subgroups of $\cG$ is also prounipotent. If
$\tau = \exp N_1  = \exp N_2$, then lies in the intersection of the two
unipotent 1-parameter subgroups $\{\exp tN_j : t \in F\}$, $j=1,2$. If $\tau
\neq 1$, this forces $N_1=N_2$. If $\tau = 1$, the unique logarithm is $N=0$.
\end{proof}

\begin{proposition}
\label{prop:log_dehn}
If $\tau \in \G_{S,D}^G$ is a Dehn twist, then $\rhohat(\tau)$ is a prounipotent
element of $\cG_{S,D}$.
\end{proposition}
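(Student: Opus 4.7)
The plan is to use the Jordan decomposition in the proalgebraic group $\cG^G_{S,D}$ together with the Picard--Lefschetz formula. Recall that any element $g$ of an affine (pro)algebraic group over a field of characteristic zero admits a Jordan decomposition $g = g_s\,g_u = g_u\,g_s$ with $g_s$ semisimple and $g_u$ unipotent, and this decomposition is functorial under homomorphisms of (pro)algebraic groups. The strategy is to show that the semisimple part of $\rhohat(\tau)$ is forced to be the identity.

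First, since $\tau$ is supported on a tubular neighbourhood of a simple closed curve disjoint from the decorations $D$, it fixes $D$ pointwise and therefore lies in the kernel $\G_{S,D}$ of the projection $\G_{S,D}^G \to G$. Consequently, the image of $\rho(\tau)$ in $G \times \Sp(H)$ has trivial $G$-component. Its $\Sp(H)$-component is given by the Picard--Lefschetz formula
\[
\rho(\tau)(x) = x + \langle c, x\rangle c,
\]
where $c$ is the isotopy class of the defining curve. Setting $N(x) = \langle c, x\rangle c$, skew-symmetry of the intersection form gives $N^2 = 0$, so $\rho(\tau) = I + N$ is unipotent in $\Sp(H)$.

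Next, I would apply Jordan decomposition in $\cG^G_{S,D}$ to write $\rhohat(\tau) = \sigma_s\,\sigma_u$. Functoriality under the projection $\cG^G_{S,D} \to G\times \Sp(H)$ sends $\sigma_s$ to the semisimple part of $\rho(\tau) = (\mathrm{id}, I+N)$, which is the identity. Hence $\sigma_s$ lies in $\ker(\cG^G_{S,D}\to G\times\Sp(H)) = \U_{S,D}$. But $\U_{S,D}$ is prounipotent, so every one of its elements is unipotent; in particular $\sigma_s$ is simultaneously semisimple and unipotent, which forces $\sigma_s = 1$. Therefore $\rhohat(\tau) = \sigma_u$ is unipotent and lies in the prounipotent $1$-parameter subgroup $\{\exp(t\log\sigma_u):t\in \Q\}$ of $\cG^G_{S,D}$, establishing the claim.

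The main technical point is justifying Jordan decomposition in the proalgebraic setting. This is not itself hard: working over the perfect field $\Q$, Jordan decomposition exists on each finite-dimensional quotient $\cG^G_{S,D}/K_\alpha$ in the defining inverse system, and the decompositions are compatible with the transition maps because those maps are morphisms of algebraic groups. Passing to the inverse limit produces the decomposition in $\cG^G_{S,D}$, with functoriality inherited from the finite-dimensional case. Once this is in place, the argument above is essentially formal.
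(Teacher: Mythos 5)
Your proof is correct, but it takes a genuinely different route from the paper's. You invoke the Jordan decomposition $\rhohat(\tau)=\sigma_s\sigma_u$ in the proalgebraic group, use the Picard--Lefschetz formula to see that the image of $\sigma_s$ in the reductive quotient $G\times\Sp(H)$ is trivial, conclude that $\sigma_s$ lies in the prounipotent radical $\U_{S,D}$, and then kill it because an element that is both semisimple and unipotent is the identity. The paper argues more directly: since $\rho(\tau)=\exp N$ is unipotent in $\Sp(H)$, it lies in the one-parameter unipotent subgroup $L=\{\exp tN : t\in\Q\}$, and the preimage $\H$ of $L$ under $\cG_{S,D}\to\Sp(H)$ is an extension of the unipotent group $L$ by the prounipotent group $\U_{S,D}$ and is therefore itself prounipotent; since $\rhohat(\tau)\in\H$, one is done. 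Both arguments pivot on the same Picard--Lefschetz unipotence, but the paper's avoids the need to set up Jordan decomposition for proalgebraic groups --- the technical point you rightly flag as requiring justification via the inverse system --- and replaces it with the lighter-weight observation that an extension of a (pro)unipotent group by a prounipotent group is prounipotent. Your version is somewhat more conceptual in that it explains exactly where the semisimple part must vanish; the paper's is shorter and needs less machinery. One small virtue of your write-up is that it explicitly notes $\tau\in\G_{S,D}$ (the $G$-component is trivial), which the paper leaves implicit.
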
 

\begin{proof}
The Picard-Lefschetz formula implies that $\rho(\tau)$ is a unipotent element of
$\Sp(H_1(S))$ and that it is the exponential of $N = \rho(\tau) - \id$. The
inverse image $\H$ of the unipotent subgroup $L := \{\exp tN : t\in \Q\}$ of
$\Sp(H)$ in $\cG_{S,D}$ is prounipotent as it is an extension
$$
1 \to \U_{S,D} \to \H \to L \to 1
$$
of a unipotent group by a prounipotent group. Since $\rhohat(\tau) \in \H$,
it is prounipotent and thus has a unique pronilpotent logarithm.
\end{proof}

Suppose that $(S,D)$ is a stable decorated surface and that $\gamma = \{
c_0,\dots,c_m\}$ is a curve system on $(S,D)$. Denote the Dehn twist on $c_j$ by
$\tau_j$. By Proposition~\ref{prop:log_dehn} $\rhohat(\tau_j)$ has a canonical
logarithm $N_j \in \g_{S,D}$.

Define the closed cone in $\g_{S,D}$ associated to $\gamma$ by
$$
C(\gamma) =
\big\{\sum_{j=0}^m r_j N_j : r_j \in \Q \text{ and } r_j \ge 0\big\}
$$
and the open cone by 
$$
C^o(\gamma) =
\big\{\sum_{j=0}^m r_j N_j : r_j \in \Q \text{ and } r_j > 0\big\}
$$
Then $C(\gamma)$ is a simplicial cone in $\Q^\gamma$ whose faces correspond to
the subsets $\sigma$ of $\gamma$:
$$
C(\gamma) = \coprod_{\sigma \subseteq\gamma} C^o(\sigma).
$$

Suppose that $N \in C(\gamma)$ and that $G$ is a subgroup of $\Aut D$. The
infinitesimal actions
$$
\ad : \g_{S,D} \to \Der \O(\cG^G_{S,D}) \text{ and }
\ad : \g_{S,D} \to \Der\g_{S,D}.
$$
induce actions of $N$ on each weight graded quotient of $\O(\cG^G_{S,D})$ and
$\g_{S,D}$. By Corollary~\ref{cor:symplectic}, each weight graded quotient of
$\O(\cG^G_{S,D})$ is a direct sum of finite dimensional $\sp(H)$-modules and
each weight graded quotient of $\g_{S,D}$ is a direct product of finite
dimensional $\sp(H)$-modules. Consequently, each weight graded quotient of
$\O(\cG^G_{S,D})$ is a direct sum of finite dimensional nilpotent $N$-modules
and and each weight graded quotient of $\g_{S,D}$ is a direct product of finite
dimensional nilpotent $N$-modules.

The following theorem is a special case of more general results proved in
\cite{hain-matsumoto:mcg} using Galois theory and in \cite{hain-pearlstein}
using Hodge theory.

\begin{theorem}
\label{thm:wt_mcg}
For all curve systems $\gamma$ of a stable decorated surface $(S,D)$, all
subgroups $G$ of $\Aut D$, and all $N\in C^o(\gamma)$, there is a (necessarily
unique) relative weight filtration $M_\dot^\gamma$ (denoted $M_\dot$) of
$\O(\cG_{S,D})$ of the endomorphism
$$
\ad(N) \in \Der(\O(\cG^G_{S,D}),W_\dot)
$$
that is compatible with the product, antipode and coproduct of $\O(\cG_{S,D})$.
It induces relative weight filtrations on $\g_{S,D}$ and $\p(S_D',x)$, where $x
\in S_D'-|\gamma|$ is an admissible base point. The bracket of each of these Lie
algebras is strictly compatible with $M_\dot$. These relative weight filtrations
depends only on $\gamma$ and will be denoted by $M^\gamma_\dot$. Each $N_j\in
C(\gamma)$ lies in $M_{-2}\g_{S,D}$. Moreover, for each curve system $\gamma$
there is a natural (though not canonical) isomorphism
$$
\g_{S,D} \cong \prod_{k,m} \Gr^M_k \Gr^W_m \g_{S,D}
$$
of completed Lie algebras. In addition, if $\Dtilde$ is a refinement of $D$ that
contains the base point $x$ and $\gammatilde$ is a curve system on $(S,\Dtilde)$
whose image in $(S,D)$ is $\gamma$, then the natural actions
$$
d\phitilde_x : \g_{S,\Dtilde} \to \Der \p(S_D',x)
\text{ and }
d\phi : \g_{S,D} \to \OutDer \p(S_D').
$$
are strictly compatible with $W_\dot$ and the relative weight filtrations
$M_\dot^\gamma$ and $M_\dot^\gammatilde$. The filtrations $M_\dot$ and $W_\dot$
can be simultaneously split. That is, they can be chosen so that the diagram
$$
\xymatrix{
\g_{S,\Dtilde}\ar[r]^(0.35)\simeq\ar[d] &
\prod_{k,m}\Gr^M_k\Gr^W_m \g_{S,\Dtilde} \ar[d]\cr
\Der \p(S_D',x) \ar[r]^(0.375)\simeq &  \Der \Gr^M_\dot\Gr^W_\dot \p(S_D',x)
}
$$
commutes. There is a similar diagram for $d\phi : \g_{S,D} \to \OutDer
\p(S_D')$.
\end{theorem}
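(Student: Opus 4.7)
The plan is to deduce Theorem~\ref{thm:wt_mcg} from the general existence of relative weight filtrations associated to normal-crossing degenerations of smooth varieties (Deligne \cite{deligne:weil2}, Steenbrink-Zucker \cite{steenbrink-zucker}, Kashiwara \cite{kashiwara}), transported to relative completion via the Hodge/Galois theory of fundamental groups. The geometric setup is that $\G^G_{S,D}$ is the orbifold fundamental group of the moduli stack $\M_{g,D}^G$ of decorated curves of type $(S,D)$ with $G$-symmetries, and the Deligne-Mumford compactification $\overline{\M}_{g,D}^G$ has boundary divisors indexed by topological types of curve systems. A curve system $\gamma = \{c_0,\dots,c_m\}$ determines a boundary stratum of codimension $m+1$; in a formal polydisc neighbourhood the boundary is the normal-crossing divisor $t_0\cdots t_m = 0$, and the local monodromies about the $t_j = 0$ are precisely the Dehn twists $\tau_j$, so that $N_j = \log\rhohat(\tau_j) \in \g_{S,D}$.

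First I would realize the pro-systems $\O(\cG^G_{S,D})$, $\g_{S,D}$ and $\p(S_D',x)$ as inverse or direct limits of admissible variations of mixed Hodge structure (or mixed $\ell$-adic sheaves) over $\M_{g,D}^G$, with $W_\dot$ the global weight filtration coming from this VMHS structure; this is the content of \cite{hain:torelli}. For each finite-dimensional piece in the pro-system and any $N \in C^o(\gamma)$, the general limit mixed Hodge structure theorem produces a relative monodromy weight filtration of $N$ relative to $W_\dot$, and the resulting bifiltered limit MHS depends only on the open cone $C^o(\gamma)$ rather than on the chosen direction inside it. Taking the limit over the pro-system yields $M^\gamma_\dot$ on $\g_{S,D}$ (and similarly on $\O(\cG^G_{S,D})$ and $\p(S_D',x)$). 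The assertion $N_j \in M^\gamma_{-2}\g_{S,D}$ reflects the standard fact that local monodromy logarithms of a normal-crossing degeneration have type $(-1,-1)$, hence lie in $M_{-2}\cap W_{-2}$.

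Second I would check the structural compatibilities. The bracket, coproduct and antipode on $\O(\cG^G_{S,D})$, together with the natural maps $d\phitilde_x : \g_{S,\Dtilde} \to \Der\p(S_D',x)$ and $d\phi : \g_{S,D} \to \OutDer\p(S_D')$, are all induced by morphisms of VMHS --- they come from algebro-geometric structures on the universal curve and from maps between moduli stacks refining decorations. Consequently they pass to morphisms of the bifiltered limit mixed Hodge structures, and strictness for both $W_\dot$ and $M^\gamma_\dot$ is a standard property of morphisms of (limit) MHS. The simultaneous splitting comes from the Deligne bigrading of a limit MHS, which together with tannakian descent supplies a (non-canonical) splitting of the bifiltration over $\Q$; its functoriality in tensor products and morphisms yields the commutative square comparing $\g_{S,\Dtilde} \to \Der \p(S_D',x)$ with its bigraded analogue.

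The main obstacle lies in establishing the underlying VMHS structure on every finite-dimensional subquotient of $\O(\cG^G_{S,D})$ and $\p(S_D',x)$, functorially with respect to all the surface-theoretic operations (refinement of decorations, change of base point, inclusion of a boundary stratum), and in verifying that these VMHS extend admissibly across the boundary component determined by $\gamma$ so that the nearby-cycles theorems actually apply. This is precisely the technical core of \cite{hain:torelli}, \cite{hain-matsumoto:mcg} and \cite{hain-pearlstein}; once it is in place, the desired conclusions follow from the general yoga of admissible variations of mixed Hodge structure and their limit mixed Hodge structures.
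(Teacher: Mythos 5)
The paper does not prove Theorem~\ref{thm:wt_mcg}: it states that the result is a special case of more general theorems established in \cite{hain-matsumoto:mcg} using Galois theory and in \cite{hain-pearlstein} using Hodge theory, and refers the reader there. Your sketch accurately reproduces the Hodge-theoretic route those references take --- realizing $\O(\cG^G_{S,D})$, $\g_{S,D}$ and $\p(S_D',x)$ as (pro- or ind-)admissible variations of mixed Hodge structure over the moduli stack, identifying the curve system $\gamma$ with a normal-crossing boundary stratum whose local monodromy logarithms are the $N_j$, invoking the limit mixed Hodge structure/relative monodromy weight filtration theorems of Deligne, Steenbrink--Zucker and Kashiwara, and then deriving strictness, the location of $N_j$ in $M_{-2}\cap W_{-2}$, and the simultaneous splitting from the general yoga of morphisms of limit MHS and the Deligne bigrading --- and you correctly flag that the real technical work (admissibility of the VMHS up to the boundary, functoriality in all the surface operations) is exactly what is carried out in \cite{hain:torelli}, \cite{hain-matsumoto:mcg} and \cite{hain-pearlstein}. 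So your proposal is the same approach the paper points to, stated at the same level of detail the paper itself offers.
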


Since the diagonal $\Delta:\O(\cG^G_{S,D})\to
\O(\cG^G_{S,D})\otimes\O(\cG^G_{S,D})$ preserves $M_\dot$, the image of $M_{-1}
\O(\cG^G_{S,D})$ under the diagonal is contained in
$$
M_{-1}\O(\cG^G_{S,D})\otimes \O(\cG^G_{S,D})
+ \O(\cG^G_{S,D})\otimes M_{-1}\O(\cG^G_{S,D}).
$$
This implies that $M_{-1}\O(\cG^G_{S,D})$ is a Hopf ideal of $\O(\cG^G_{S,D})$.
Define
$$
M_0\cG_{S,D}^G = \Spec \big(\O(\cG^G_{S,D})/M_{-1}\O(\cG^G_{S,D})\big).
$$
This is a subgroup of $\cG^G_{S,D}$. Since $M_\dot$ is preserved by the bracket,
$M_k\,\g_{S,D}$ is a pronilpotent Lie subalgebra of $\g_{S,D}$ whenever $k\le
0$. The Lie algebra of $M_0\cG_{S,D}^G$ is $M_0\g_{S,D}$. When $k<0$,
$M_k\g_{S,D}$ is pronilpotent. Denote the corresponding prounipotent subgroup
of $\cG_{S,D}^G$ by $M_k\cG_{S,D}$.

The uniqueness of relative weight filtrations is a strong condition which
implies that $M_\dot^\gamma$ has many nice properties. Some are established in
the following paragraphs.

\begin{proposition}
Suppose that $\gamma$ is a curve system on a stable decorated surface $(S,D)$.
If $\phi \in \G_{S,D}^G$, then the relative weight filtrations of
$\O(\cG^G_{S,D})$ and $\g_{S,D}$ satisfy $M_\dot^{\phi(\gamma)} = \Ad(\phi)
M_\dot^\gamma$.
\end{proposition}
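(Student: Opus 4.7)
My plan is to reduce the proposition to the uniqueness of the relative weight filtration by showing that $\Ad(\phi) M_\dot^\gamma$ satisfies the same characterizing properties as $M_\dot^{\phi(\gamma)}$.

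First I would verify the behaviour of the canonical logarithms under conjugation. For each $c_j \in \gamma$, the element $\phi \tau_j \phi^{-1} \in \G_{S,D}^G$ is the Dehn twist about $\phi(c_j)$. Applying $\rhohat$ and using that $\rhohat$ is a homomorphism, $\rhohat(\phi)\rhohat(\tau_j)\rhohat(\phi)^{-1}$ is a prounipotent element whose unique pronilpotent logarithm is, on the one hand, $\Ad(\phi) N_j$, and on the other hand, by Proposition~\ref{prop:log_dehn} applied to $\phi(c_j)$, the canonical generator $N_{\phi(c_j)}$ of the cone $C(\phi(\gamma))$. Hence $\Ad(\phi)$ carries $C(\gamma)$ isomorphically onto $C(\phi(\gamma))$; in particular, for any $N \in C^o(\gamma)$, the element $\Ad(\phi) N$ lies in $C^o(\phi(\gamma))$.

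Next I would observe that $\Ad(\phi)$ preserves the weight filtration $W_\dot$ on each of $\O(\cG_{S,D}^G)$, $\g_{S,D}$ and $\p(S_D',x)$. Indeed, the weight filtration is intrinsic to these objects, so any automorphism of $\cG_{S,D}^G$ preserves it; conjugation by $\rhohat(\phi)$ is such an automorphism. Consequently, $\Ad(\phi)$ also preserves the individual $W_\dot$-graded quotients together with their monodromy weight filtrations associated to nilpotent endomorphisms.

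With these two ingredients in place, I apply the characterizing axioms of a relative weight filtration to $\Ad(\phi) M_\dot^\gamma$. Since $\Ad(\phi)$ is a Hopf algebra / Lie algebra automorphism that preserves $W_\dot$, the filtration $\Ad(\phi) M_\dot^\gamma$ is compatible with all the structures (product, coproduct, antipode, bracket). For any $N \in C^o(\gamma)$, the identity
\[
\ad(\Ad(\phi) N) \;=\; \Ad(\phi) \circ \ad(N) \circ \Ad(\phi)^{-1}
\]
together with the facts that $\Ad(\phi) N \in C^o(\phi(\gamma))$ and that $M_\dot^\gamma$ is the relative weight filtration of $\ad(N)$ with respect to $W_\dot$, shows that $\Ad(\phi) M_\dot^\gamma$ satisfies the two defining axioms of the relative weight filtration of $\ad(\Ad(\phi) N)$. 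By the uniqueness of relative weight filtrations (cf.\ \cite{steenbrink-zucker} and Theorem~\ref{thm:wt_mcg}), this forces $\Ad(\phi) M_\dot^\gamma = M_\dot^{\phi(\gamma)}$, as claimed. The argument for $\O(\cG^G_{S,D})$ and $\p(S_D',x)$ is identical. The only subtle point is the stability of $W_\dot$ under $\Ad(\phi)$; this is really a naturality statement, but worth making explicit to avoid a gap.
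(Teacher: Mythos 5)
Your proposal is correct and follows essentially the same route as the paper: both establish $N_{\phi(\gamma)} = \Ad(\phi)N_\gamma$ from the conjugation formula $\tau_{\phi(c)} = \phi\tau_c\phi^{-1}$, verify the shift condition $N_{\phi(\gamma)}(\Ad(\phi)M_k^\gamma) \subseteq \Ad(\phi)M_{k-2}^\gamma$, and invoke uniqueness of the relative weight filtration. You are somewhat more explicit than the paper in flagging that $\Ad(\phi)$ preserves $W_\dot$ and hence also the induced monodromy weight filtrations on the $W$-graded quotients (axiom (ii) of the definition), a point the paper leaves implicit but which is indeed needed for the uniqueness argument to close.
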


\begin{proof}
For a curve system $\sigma$, let $\tau_\sigma = \prod_{c\in\sigma} \tau_c$
where $\tau_c$ denotes the Dehn twist about $c$. Denote the logarithm of
$\tau_\sigma$ by $N_\sigma$. Since $\tau_{\phi(c)} = \phi\tau_c\phi^{-1}$, it
follows that $ \tau_{\phi(\gamma)} = \phi\tau_\gamma\phi^{-1}$, which implies
$N_{\phi(\gamma)} =  \Ad(\phi)N_\gamma$. Since $N_\gamma(M_k^\gamma) \subseteq
M_{k-2}^\gamma$, it follows that
\begin{multline*}
N_{\phi(\gamma)}\big(\Ad(\phi)M_k^\gamma\big)
= \big(\Ad(\phi)N_\gamma\big) \big(\Ad(\phi)M_k^\gamma\big)
\cr
= \Ad(\phi)\big(N_\gamma \big(M_k^\gamma\big)\big)
\subseteq \Ad(\phi)M_{k-2}^\gamma.
\end{multline*}
The result now follows from the uniqueness of the relative weight filtration.
\end{proof}

The subalgebra $M^\gamma_0\,\g_{S,D}$ of $\g_{S,D}$ behaves like a parabolic
subalgebra of a semi-simple Lie algebra:

Parabolic subalgebras of semi-simple and Kac-Moody Lie algebras are self
normalizing, and correspond to boundary strata in the semi-simple case. The
following result suggests that when $g\ge 2$, the subalgebras
$M^\gamma_0\g_{S,D}$ of $\g_{S,D}$ might provide a good notion of parabolic
subalgebra of $\g_{S,D}$.

\begin{proposition}
\label{prop:parabolic}
If $\gamma$ is a curve system on a stable decorated surface $(S,D)$ where
$g(S)\ge 3$, then the normalizer of $M^\gamma_0\g_{S,D}$ in $\g_{S,D}$ is
$M^\gamma_0\g_{S,D}$.
\end{proposition}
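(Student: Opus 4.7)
The plan is to treat $M_0^\gamma\g_{S,D}$ exactly as one treats a parabolic subalgebra of a semisimple Lie algebra: the ``Lefschetz grading element'' $\xi$ from Example~\ref{ex:wt_filt} will serve as an internal witness of the $M$-grading, and the normalizer condition applied with $\xi$ will force any would-be normalizer outside $M_0^\gamma\g_{S,D}$ to vanish modulo $M_0^\gamma\g_{S,D}$. The reverse inclusion $M_0^\gamma\g_{S,D}\subseteq N_{\g_{S,D}}(M_0^\gamma\g_{S,D})$ is automatic, since $M_0^\gamma\g_{S,D}$ is a Lie subalgebra by Theorem~\ref{thm:wt_mcg}.

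First I would invoke the splitting portion of Theorem~\ref{thm:wt_mcg} to fix a simultaneous splitting of $W_\dot$ and $M_\dot^\gamma$, obtaining an isomorphism
$$
\g_{S,D} \;\cong\; \prod_{k,m} \Gr^M_k\Gr^W_m\,\g_{S,D}
$$
of complete Lie algebras under which $M_a^\gamma\g_{S,D} = \prod_{k\le a,\,m}\Gr^M_k\Gr^W_m\g_{S,D}$. Through this splitting I lift $\xi\in\sp(\Gr^M_\dot H)$ (defined in Example~\ref{ex:wt_filt} as the element of $\Gr^M_0\sp(H)$ corresponding to $\id_A$) to a distinguished element of $\Gr^M_0\Gr^W_0\,\g_{S,D}$; this lift lies in $M_0^\gamma\g_{S,D}$.

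The key computation is that $\ad(\xi)$ acts on $\Gr^M_k\Gr^W_m\,\g_{S,D}$ as the scalar $k-m$. This extends the action formula of Example~\ref{ex:wt_filt} from $\sp(H)$-submodules of $H^{\otimes n}$ to the weight-graded pieces $\Gr^W_m\g_{S,D}$ (with $n=-m$), because each $\Gr^W_m\g_{S,D}$ is, by Corollary~\ref{cor:symplectic} and the structure of the mapping-class Lie algebra, built from such $\sp(H)$-submodules. Since $\g_{S,D}=W_0\g_{S,D}$ forces $m\le 0$, we obtain $k-m>0$ whenever $k>0$; in particular $\ad(\xi)$ acts invertibly on $\Gr^M_k\g_{S,D}$ for every $k>0$. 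Now suppose $x\in\g_{S,D}$ normalizes $M_0^\gamma\g_{S,D}$, and decompose $x=\sum_{k,m}x_{k,m}$ via the splitting. Since $\xi\in M_0^\gamma\g_{S,D}$, the normalizer condition gives $[\xi,x]\in M_0^\gamma\g_{S,D}$; its projection onto each $\Gr^M_k\g_{S,D}$ with $k>0$ must therefore vanish. But $[\xi,x]=\sum(k-m)\,x_{k,m}$, so $(k-m)\,x_{k,m}=0$ for all $k>0$ and $m\le 0$, whence $x_{k,m}=0$. Consequently $x\in M_0^\gamma\g_{S,D}$, as required.

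The step I expect to be the main obstacle is the eigenvalue computation in the third paragraph — specifically, verifying that $\ad(\xi)$ really acts on each $\Gr^W_m\g_{S,D}$ by the tensor-weight formula of Example~\ref{ex:wt_filt}. This requires unpacking how $\Gr^W_m\g_{S,D}$ sits inside tensor powers of $H$ as an $\sp(H)$-module, and checking that both the $M$-grading and the $\xi$-eigenvalues behave correctly under these inclusions. The hypothesis $g\ge 3$ enters precisely to guarantee that Theorem~\ref{thm:wt_mcg} applies in full strength — ensuring the simultaneous splitting, the description of $W_{-m}\g_{S,D}$ via the lower central series of $\u_{S,D}$, and that $\sp(H)$ is large enough to support the Lefschetz element meaningfully — while the rest of the argument is then purely formal.
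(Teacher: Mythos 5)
Your proposal follows essentially the same route as the paper's proof: both pass to the associated bigraded object $\Gr^M_\dot\Gr^W_\dot\g_{S,D}$, both use the element $\xi \in \Gr^M_0\Gr^W_0\g_{S,D}$ of Example~\ref{ex:wt_filt} as a grading element, and both conclude from the eigenvalue formula $[\xi,X]=(k-m)X$ on $\Gr^M_k\Gr^W_m\g_{S,D}$ that any $X$ with a nonzero component in some $\Gr^M_k$ with $k>0$ fails to normalize $M^\gamma_0$. The step you correctly flag as the main thing to verify --- that each $\Gr^W_m\g_{S,D}$ sits as an $\sp(H)$-module inside a tensor power of $H$ so that the eigenvalue formula of Example~\ref{ex:wt_filt} applies --- is exactly what the paper supplies by citing Johnson's theorem, which exhibits $\Gr^W_m\g_{S,D}$ as an $\sp(H)$-quotient of $(H^n\oplus\Lambda^3H)^{\otimes m}$ with $n=\#D-1$; apart from that citation, your argument and the paper's are the same.
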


\begin{proof}
Since the functor $\Gr^M_\dot \Gr^W_\dot$ is exact, it suffices to prove the
result for the associated bigraded Lie algebra $\Gr^M_\dot\Gr^W_\dot\g_{S,D}$.
Set $M_\dot = M_\dot^\gamma$ and $H=H_1(S)$. Let $\xi \in \Gr^M_0\sp(H)$ be the
element defined in Example~\ref{ex:wt_filt}. It lies in
$\Gr^M_0\Gr^W_0\g_{S,D}$. Johnson's theorem \cite{johnson:h1} implies that 
$\Gr^W_m\g_{S,D}$ is an $\sp(H)$-quotient of $(H^n\oplus\Lambda^3 H)^{\otimes
m}$, where $n=\#D-1$. It follows from Example~\ref{ex:wt_filt} that if $k>0$ and
$X \in \Gr^M_k\Gr^W_m \g_{S,D}$, then $[\xi,X] = (k-m)X$, which is non-zero
whenever $k>0$. Thus, if $X \notin M_0\Gr^M_\dot\Gr^W_\dot \g_{S,D}$, then $X$
does not normalize $M_0\Gr^M_\dot\Gr^W_\dot \g_{S,D}$.
\end{proof}

This result also holds in genus 2, but not in genus 1.

\subsection{Dependence on $\gamma$}

For a curve system $\gamma$ of a stable decorated surface $(S,D)$, denote the
subspace of $H_1(S)$ spanned by the homology classes of the $c\in \gamma$ by
$\langle \gamma \rangle$.

\begin{proposition}
\label{prop:rel_wt}
If $\gamma$ is a curve system of a stable decorated surface $(S,D)$ and $\sigma
\subseteq \gamma$, then the relative weight filtrations $M_\dot^\gamma$ and
$M_\dot^\sigma$ of $\O(\cG^G_{S,D})$, $\g_{S,D}$ and $\p(S_D')$ are equal if and
only if $\langle \sigma \rangle = \langle \gamma \rangle$. 
\end{proposition}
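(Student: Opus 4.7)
The plan is to prove the two implications separately, using in both cases that the relative weight filtration is uniquely determined by the action of $\bar N\in\sp(H)$ on each weight graded quotient $\Gr^W_m$.

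For the forward direction, I would restrict $M_\dot^\gamma$ to $\p(S_D',x)$ for an admissible base point $x\in S_D'-|\gamma|$, and examine the filtration it induces on $\Gr^W_{-1}\p(S_D',x)\cong H_1(S_D')$. By the defining properties of a relative weight filtration, together with the Picard--Lefschetz formula of Section~\ref{sec:curve_sys} and Example~\ref{ex:classical_ctd}, this induced filtration satisfies
\[
M^\gamma_{-2}H_1(S_D') \;=\; \im \bar N_\gamma + \Htilde_0(D) \;=\; \langle\gamma\rangle + \Htilde_0(D).
\]
The same identity holds for $\sigma$, so $M_\dot^\gamma=M_\dot^\sigma$ forces $\langle\gamma\rangle=\langle\sigma\rangle$ in $H_1(S)$.

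For the backward direction, set $V:=\langle\sigma\rangle=\langle\gamma\rangle$. Because distinct curves in a curve system have zero intersection number, $V$ is isotropic in $H:=H_1(S)$, so $S^2 V\subset\sp(H)\cong S^2 H$ is an abelian subalgebra containing both $\bar N_\sigma$ and $\bar N_\gamma$. Choose a decomposition $H=V\oplus V'\oplus W$ with $V'$ a symplectic complement of $V$ in $V^\perp$ and $W$ Lagrangian-dual to $V$, and let $\xi\in\sp(H)$ be the semisimple element acting as $+1$ on $V$, $0$ on $V'$, and $-1$ on $W$. A direct computation in $S^2 H$ yields $[\xi,c^2]=2c^2$ for every $c\in V$; hence $[\xi,\bar N_\sigma]=2\bar N_\sigma$ and $[\xi,\bar N_\gamma]=2\bar N_\gamma$, and Jacobson--Morozov provides $\sl_2$-triples for both nilpotents with \emph{the same} Cartan $\xi$. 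Standard $\sl_2$-theory then shows that on any $\sp(H)$-module $U$ the weight filtrations $W(\bar N_\sigma)_\dot$ and $W(\bar N_\gamma)_\dot$ both coincide with the filtration of $U$ by $\xi$-weight, and in particular with each other.

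Applying the previous paragraph to the $\sp(H)$-modules $\Gr^W_m\g_{S,D}$, $\Gr^W_m\O(\cG^G_{S,D})$, and $\Gr^W_m\p(S_D')$ --- each a product of finite dimensional $\sp(H)$-representations by Corollary~\ref{cor:symplectic} --- shows that $M_\dot^\sigma$ and $M_\dot^\gamma$ induce equal filtrations on every weight graded quotient. I would then use the simultaneous splittings of $M_\dot$ and $W_\dot$ supplied by Theorem~\ref{thm:wt_mcg} to promote this to global equality $M_\dot^\sigma=M_\dot^\gamma$. The main obstacle is this last promotion: one has to check that $\ad(N_\gamma)$ preserves $M_\dot^\sigma$ with the correct shift, equivalently that $N_\gamma\in M^\sigma_{-2}\g_{S,D}$. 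Using a splitting adapted to $M^\sigma$, this reduces to the graded fact that $c^2\in M^\sigma_{-2}\sp(H)$ for $c\in V$ (immediate from $\im\bar N_\sigma^2=S^2 V$), together with its higher-weight analogues from the same $\sl_2$-computation.
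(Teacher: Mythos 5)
Your treatment of the forward implication and of the equality of the two \emph{graded} filtrations is sound: the $\sl_2$-triple construction (choosing $\xi$ to act as $+1,0,-1$ on $V,V',W$ and producing common Cartan elements for $\bar N_\sigma$ and $\bar N_\gamma$) correctly shows that both monodromy weight filtrations on every $\sp(H)$-module coincide with the $\xi$-weight filtration. The paper reaches the same conclusion more briefly by noting that each $\Gr^W_m$ is a subquotient of a tensor power of $H$ and that the monodromy weight filtration is multiplicative in tensor powers, so it is determined by the filtration on $H$, which depends only on $\langle\gamma\rangle$. Your route and the paper's are functionally equivalent here.

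The gap is in the final ``promotion'' step. You aim to show that $M^\sigma_\dot$ is the relative weight filtration for $N_\gamma$, which forces you to verify $N_\gamma\in M^\sigma_{-2}\g_{S,D}$. That is not a graded fact: the element $N_\gamma$ has nonzero components in $W_{-m}\g_{S,D}$ for $m\ge 1$, and you have no a priori control on where those components sit relative to $M^\sigma_\dot$. The computation $[c]^2\in M^\sigma_{-2}\sp(H)$ only addresses the $\Gr^W_0$ component $\bar N_\gamma$; the ``higher-weight analogues'' you invoke do not follow from the $\sl_2$-computation, and indeed the only tool the paper gives for such a containment --- the assertion in Theorem~\ref{thm:wt_mcg} that each $N_j\in C(\gamma)$ lies in $M^\gamma_{-2}\g_{S,D}$ --- is stated for the \emph{containing} curve system $\gamma$, not for the subsystem $\sigma$, and for $c\in\gamma\setminus\sigma$ there is no immediate reason $N_c\in M^\sigma_{-2}$. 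In effect you are trying to prove the conclusion by assuming a consequence of it.

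The fix is to run the uniqueness argument in the opposite direction, as the paper does: show that $M^\gamma_\dot$ is \emph{already} a relative weight filtration for $N_\sigma$. Condition (i), that $N_\sigma$ shifts $M^\gamma_\dot$ by $-2$, is automatic because $\sigma\subseteq\gamma$ implies $N_\sigma=\sum_{c\in\sigma}N_c$ with each $N_c\in M^\gamma_{-2}\g_{S,D}$ (Theorem~\ref{thm:wt_mcg}). Condition (ii), that the induced filtrations on each $\Gr^W_m$ are the monodromy weight filtrations of $\Gr^W_m\ad(N_\sigma)$, is exactly what your $\sl_2$-computation supplies. Uniqueness of the relative weight filtration then gives $M^\sigma_\dot=M^\gamma_\dot$. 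So the ingredients you assembled are all correct; they just need to be fed into the uniqueness mechanism with $\gamma$ and $\sigma$ in the roles dictated by the inclusion $\sigma\subseteq\gamma$.

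One smaller remark: your forward direction is carried out only for $\p(S_D',x)$; to cover $\g_{S,D}$ and $\O(\cG^G_{S,D})$ as the statement requires, apply the identical argument to $\Gr^W_0\g_{S,D}\cong\sp(H)$ (or $\Gr^W_0\O$), from which the filtration on $H$ and hence $\langle\gamma\rangle=\im\bar N_\gamma$ is recovered.
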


\begin{proof}
The condition that $\langle \gamma \rangle = \langle \sigma \rangle$ implies
that the monodromy weight filtrations on $H_1(S)$ are equal. Since each weight
graded quotient of $\O(\cG^G_{S,D})$, $\g_{S,D}$ and $\p(S_D')$ is a subquotient
of a tensor power of $H_1(S)$, it follows that the monodromy weight filtrations
associated to $\gamma$ and $\sigma$ on $\Gr^W_\dot \g_{S,D}$ are  equal if
and only if $\langle\gamma\rangle = \langle\sigma\rangle$. Similarly for
$\p(S_D')$ and $\O(\cG^G_{S,D})$.

To complete the proof, we now show that if the monodromy weight filtrations on
$\Gr^W_\dot V$ agree, where $V =\O(\cG^G_{S,D})$, $\g_{S,D}$ or $\p(S_D')$, then
the relative weight filtrations on $V$ agree. Since $\sigma \subseteq \gamma$,
for each $c\in \sigma$, $N_c(M_k^\gamma) \subseteq M_{k-2}^\gamma$. That is
$M_\dot^\gamma$ is a relative weight filtration on $V$ for each $N \in
C^o(\sigma)$. Uniqueness implies that $M_\dot^\sigma = M_\dot^\gamma$.
\end{proof}

When $g(S)=0$, $H_1(S)=0$ and the hypotheses of Proposition~\ref{prop:rel_wt}
are satisfied when $\sigma$ is empty. This implies that the relative weight
filtration is the existing weight filtration:

\begin{corollary}
\label{cor:rational}
If $\gamma =\{c_0,\dots,c_m\}$ is a curve system on a stable decorated surface
$(S,D)$ of genus 0 and $G$ is a subgroup of $\Aut D$, then the relative weight
filtrations of $\O(\cG_{S,D}^G)$, $\p(S_D',x)$ and $\g_{S,D}$ equal their
natural weight filtrations $W_\dot$. Consequently, $\cG^G_{S,D} = M^\gamma_0
\cG^G_{S,D}$. $\Box$
\end{corollary}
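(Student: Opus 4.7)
The plan is to show that in genus 0, for any $N\in C^o(\gamma)$, the action $\ad(N)$ already strictly lowers $W_\dot$ by 2; Example~\ref{ex:trivial} will then automatically identify $M^\gamma_\dot$ with $W_\dot$, and the consequence for $\cG^G_{S,D}$ will follow at once.

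First, I would record what the theorem on the weight filtration of $\g_{S,D}$ says when $g(S)=0$: all odd weight graded quotients vanish, and $\Gr^W_0\g_{S,D}=\sp(H)$ is zero since $H=H_1(S)=0$, so $\g_{S,D}=W_{-2}\g_{S,D}$. An entirely parallel statement holds for $\p(S_D',x)$, because its abelianization sits in $H_1(S_D')=\Htilde_0(D)$, which is pure of weight $-2$ in genus 0; so $\p(S_D',x)=W_{-2}\p(S_D',x)$. The key point is that each generator $N_j=\log\rhohat(\tau_{c_j})$ of the cone $C(\gamma)$ lies in $\u_{S,D}\subseteq W_{-2}\g_{S,D}$ — the containment in $W_{-2}$, and not merely in $W_{-1}$, being special to genus 0.

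Next, for any $N\in C^o(\gamma)\subseteq W_{-2}\g_{S,D}$, the strict compatibility with $W_\dot$ of the bracket on $\g_{S,D}$, of the adjoint action $\g_{S,D}\otimes\O(\cG^G_{S,D})\to\O(\cG^G_{S,D})$, and of the homomorphism $d\phitilde_x:\g_{S,\Dtilde}\to\Der\p(S_D',x)$ (all established in the weight filtration theorem for completed mapping class groups) gives
$$
\ad(N)\bigl(W_mV\bigr)\subseteq W_{m-2}V
$$
for every $V\in\{\g_{S,D},\,\O(\cG^G_{S,D}),\,\p(S_D',x)\}$. Example~\ref{ex:trivial} applies to show that $W_\dot$ \emph{is} a relative weight filtration of $\ad(N)$ on each such $V$, and the uniqueness clause of Theorem~\ref{thm:wt_mcg} then forces $M^\gamma_\dot=W_\dot$ on each.

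For the final assertion $\cG^G_{S,D}=M^\gamma_0\cG^G_{S,D}$, one needs $M^\gamma_{-1}\O(\cG^G_{S,D})=0$. This is where the only genuine subtlety lies: since $\g_{S,D}$ is concentrated in non-positive weights and $\cG^G_{S,D}$ is a semi-direct product of the finite group $G$ (with coordinate ring pure of weight 0) by the prounipotent $\cG_{S,D}$, the coordinate ring $\O(\cG^G_{S,D})$ sits in non-negative weights, so $W_{-1}\O(\cG^G_{S,D})=0$. Combined with $M^\gamma_\dot=W_\dot$ from the previous paragraph, this yields $M^\gamma_0\cG^G_{S,D}=\Spec\bigl(\O(\cG^G_{S,D})/0\bigr)=\cG^G_{S,D}$. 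Every other step is immediate from the machinery already assembled.
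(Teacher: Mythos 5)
Your proof is correct, and it follows a genuinely different path from the paper's. The paper derives the corollary as an immediate consequence of Proposition~\ref{prop:rel_wt}: since $H_1(S)=0$ in genus $0$, every curve system $\gamma$ satisfies $\langle\gamma\rangle = 0 = \langle\emptyset\rangle$, so $M_\dot^\gamma = M_\dot^{\emptyset}$, and the relative weight filtration attached to the empty curve system (equivalently, to $N=0$) is just $W_\dot$. Your argument instead bypasses Proposition~\ref{prop:rel_wt} entirely and verifies the hypothesis of Example~\ref{ex:trivial} directly: from the genus-$0$ clause of the weight filtration theorem ($W_{-2m+1}\g_{S,D}=W_{-2m}\g_{S,D}=L^m\u_{S,D}$, so in particular $W_{-1}=W_{-2}$), each $N_j$ lands in $W_{-2}\g_{S,D}$, hence $\ad(N)$ shifts $W_\dot$ down by two on $\g_{S,D}$, on $\O(\cG^G_{S,D})$, and on $\p(S_D',x)$, and uniqueness from Theorem~\ref{thm:wt_mcg} finishes. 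Both routes ultimately rest on the same fact (in genus $0$, $N$ acts trivially on every weight graded quotient), but yours makes the mechanism explicit at the level of $\ad(N)$ and uses the genus-$0$ structure of $W_\dot$ rather than the comparison-of-curve-systems machinery; the paper's is shorter because it reuses Proposition~\ref{prop:rel_wt} just proved. Your treatment of the final assertion, extracting $W_{-1}\O(\cG^G_{S,D})=0$ from the non-negativity of weights on the coordinate ring (equivalently from $W_0\cG^G_{S,D}=\cG^G_{S,D}$ in the statement of the weight filtration theorem), is also sound, if slightly more elaborate than necessary.
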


Suppose that $(S,D)$ is a stable decorated surface, where $D=P\cup V$. For the
purposes of this definition, we will consider $V$ as a set of boundary
components. Following Hatcher-Thurston \cite{hatcher-thurston} we say that two
curve systems $\gamma'$ and $\gamma''$ of $(S,D)$ differ by an {\em $A$-move} if
$$
\{c_1,c_2,c_3,c_4\} \subseteq (\gamma' \cap \gamma'') \cup V
$$
that bound a genus 0 subsurface $T$ of $S$ and there are $c_0'\in \gamma'$ and
$c_0'' \in \gamma''$ that lie in $T$ and
$$
\gamma' = \sigma \cup \{c_0'\} \text{ and } \gamma' = \sigma \cup \{c_0'\}.
$$
Figure~\ref{fig:move} illustrates an $A$-move.

\begin{figure}[!ht]
\epsfig{file=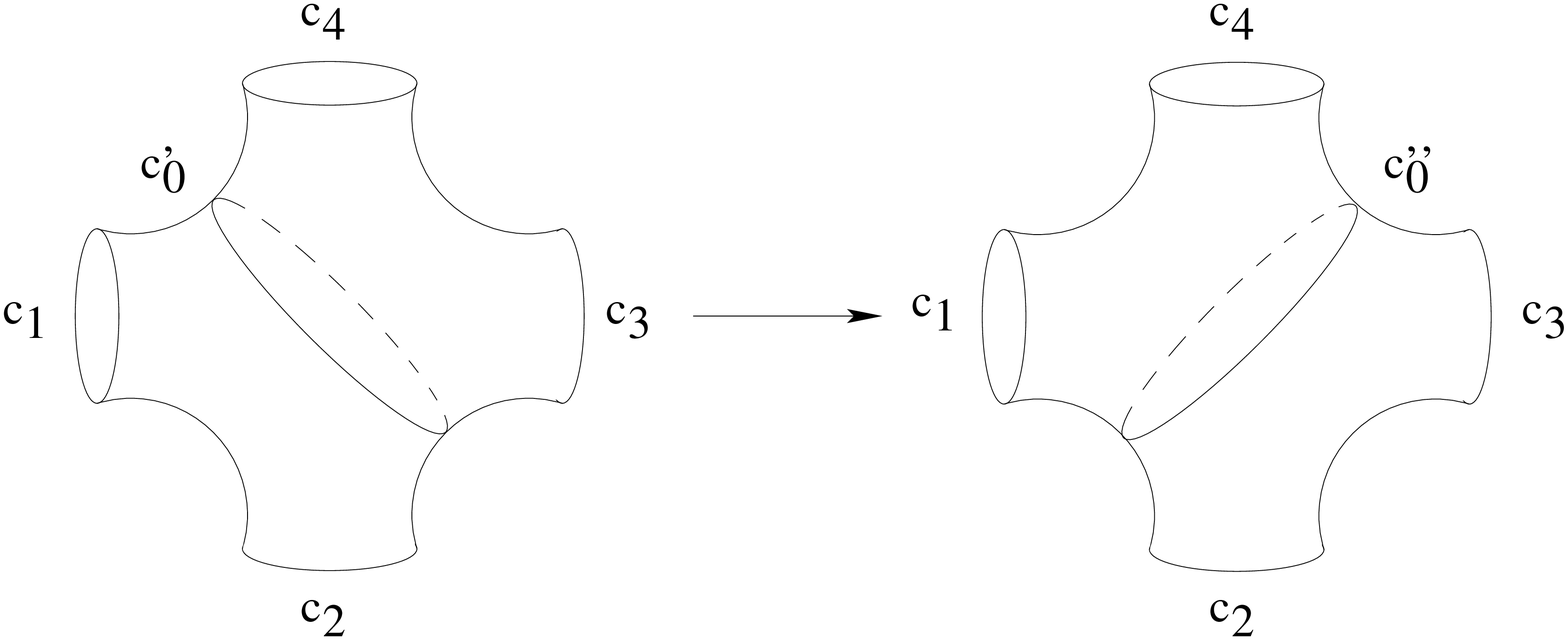, width=2.5in}
\caption{An $A$-move}\label{fig:move}
\end{figure}

Suppose that $\sigma$ and $\gamma$ are curve systems on the stable decorated
surface $(S,D)$. We say that {\em $\gamma$ is obtained from $\sigma$ by homology
neutral insertions} (or that $\sigma$ is obtained from $\gamma$ by homology
neutral deletions) if $\sigma \subseteq \gamma$ and the subspaces
$\langle\sigma\rangle$ and $\langle\gamma\rangle$ of $H_1(S)$ are equal.

The invariance of the $M_\dot^\gamma$ under homology neutral insertions
and deletions follows directly from Proposition~\ref{prop:rel_wt}.

\begin{proposition}
\label{prop:invariance}
Suppose that $(S,D)$ is a stable decorated surface. If $\gamma_1$ and $\gamma_2$
are two curve systems on $(S,D)$ that differ by a sequence of $A$-moves and by
homology neutral insertions and deletions, then the relative weight filtrations
$M_\dot^{\gamma_1}$ and $M_\dot^{\gamma_2}$ of $\g_{S,D}$ are equal. $\Box$
\end{proposition}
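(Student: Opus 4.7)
The plan is to reduce both move types to Proposition~\ref{prop:rel_wt}, which already covers the case of a curve system $\sigma \subseteq \gamma$ with equal $H_1(S)$-spans. Homology neutral insertions and deletions are exactly the hypothesis of that proposition, so that case is immediate.

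For an $A$-move between $\gamma_1$ and $\gamma_2$, I would set $\sigma = \gamma_1 \cap \gamma_2 = \gamma_1 \setminus \{c_0'\} = \gamma_2 \setminus \{c_0''\}$, where $c_0'$ and $c_0''$ are the two interior curves of the 4-holed sphere $T \subseteq S$ whose boundary $\{c_1,c_2,c_3,c_4\}$ lies in $\sigma \cup V$. The first task is to verify that $\sigma$ is itself a curve system on $(S,D)$: passing from $\gamma_1$ to $\sigma$ simply replaces the two pairs of pants $T\setminus c_0'$ by the single 4-holed sphere $T$, which has Euler characteristic $-2$, so every complementary component of $\sigma$ still has strictly negative Euler characteristic. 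The second task is to verify that $\langle \sigma \rangle = \langle \gamma_1 \rangle = \langle \gamma_2 \rangle$ in $H_1(S)$; granted this, Proposition~\ref{prop:rel_wt} applied to the inclusions $\sigma \subseteq \gamma_1$ and $\sigma \subseteq \gamma_2$ yields
\[
M_\dot^{\gamma_1} = M_\dot^\sigma = M_\dot^{\gamma_2},
\]
and iteration over the given finite sequence of moves finishes the proof.

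The substantive step is the span equality, and this is where I expect the main work to lie. Every essential simple closed curve $c$ inside a 4-holed sphere $T$ separates its four boundary components into two pairs and, with an appropriate orientation, is homologous in $T$ to $c_i + c_j$, where $c_i, c_j$ are the two boundary curves on one side of $c$. Pushing forward along $T \hookrightarrow S$ gives
\[
[c_0'],\ [c_0''] \in \langle [c_1],[c_2],[c_3],[c_4] \rangle \subseteq H_1(S).
\]
Any $c_k$ that belongs to $V$ corresponds, via the real oriented blow-up convention used throughout the paper, to a small loop around a marked interior point of the closed surface $S$, so bounds an embedded disk in $S$ and contributes $[c_k] = 0$ to $H_1(S)$; the remaining $c_k$ lie in $\sigma$ by definition. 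Consequently $[c_0'], [c_0''] \in \langle \sigma \rangle$, whence $\langle \gamma_1 \rangle = \langle \sigma, c_0'\rangle = \langle \sigma \rangle$ and similarly for $\gamma_2$. Once this homology computation is in hand, everything else is bookkeeping; the only potential pitfall is remembering to work in $H_1(S)$ of the closed surface, so that the boundary classes coming from $V$ drop out.
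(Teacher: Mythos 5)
Your argument is correct and uses the same key ingredient as the paper, namely Proposition~\ref{prop:rel_wt}: the paper asserts (just before stating the result) that invariance under homology-neutral insertions and deletions is immediate from that proposition, and then gives no further proof. The additional content you supply --- that an $A$-move factors as a homology-neutral deletion of $c_0'$ followed by a homology-neutral insertion of $c_0''$, since in the $4$-holed sphere $T$ each interior curve is homologous to a sum of two boundary curves, each of which either lies in $\sigma$ or, if it arises from $V$, bounds a disk in the closed surface $S$ and so vanishes in $H_1(S)$ --- is exactly the check the paper leaves implicit, and it is carried out correctly.
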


\subsection{Glueing Lemma}
\label{sec:glueing}

Suppose that $\gamma = \{c_0,\dots,c_m\}$ is a curve system on the stable
decorated surface $(S,D)$. For each connected component $T'$ of $S'_D -
|\gamma|$ there is a compact oriented surface $T$ and decorations $D_T = P_T
\cup V_T$ such that
$$
P_T = T'\cap P
$$ 
and $V_T$ is the union of $T'\cap V$ with the new tangent vectors obtained by
collapsing the boundary components created by removing the $c_j$.\footnote{It is
convenient and natural to choose a point on each $c_j$, as it will provide a
marking on each boundary component of $S'_D-|\gamma|$.}

There is a natural homomorphism (the {\em glueing map})
\begin{equation}
\label{eqn:curve_homom}
\prod_{T} \G_{T,D_T} \to \G_{S,D}
\end{equation}
whose image is the subgroup of $\G_{S,D}$ that are represented by
diffeomorphisms that restrict to the identity on each $c_j$. The kernel is
isomorphic to $\Z^\gamma$.

\begin{proposition}[Glueing Lemma]
\label{prop:glue}
The glueing map induces a homomorphism on relative completions such that the
diagram
$$
\xymatrix{
\prod_{T} \G_{T,D_T} \ar[d]\ar[r] & \G_{S,D} \ar[d] \cr
\prod_{T} \cG_{T,D_T} \ar[r] & \cG_{S,D}
}
$$
commutes. The induced Lie algebra homomorphism
$$
\bigoplus_T \g_{T,D_T} \to \g_{S,D}
$$
is strict with respect to the weight filtration $W_\dot$ on the $\g_{T,D_T}$
and the relative weight filtration $M_\dot^\gamma$ on $\g_{S,D}$.
\end{proposition}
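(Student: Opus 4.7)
The plan is to prove the two assertions of the Glueing Lemma in turn: first, the existence of a compatible map on relative completions, then strictness of the induced Lie algebra map.

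For the first assertion, I would apply Proposition~\ref{prop:naturality} (naturality of relative completion). The key observation is that any $\phi$ in the image of $\prod_T \G_{T,D_T} \to \G_{S,D}$ is represented by a diffeomorphism fixing each $c_j$ pointwise; in particular it commutes with every Dehn twist $\tau_{c_j}$ and fixes every homology class $[c_j] \in H_1(S)$. Consequently the image of $\prod_T \G_{T,D_T}$ in $\Sp(H_1(S))$ lies in the parabolic subgroup $P$ which pointwise stabilizes $\langle\gamma\rangle$, and projects, in the Levi quotient $\GL(\langle\gamma\rangle)\times \Sp(\langle\gamma\rangle^\perp/\langle\gamma\rangle)$, trivially on the first factor and via the natural representations into the subgroup $\prod_T \Sp(H_1(T)) \subset \Sp(\langle\gamma\rangle^\perp/\langle\gamma\rangle)$ of the second. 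Let $\cG'$ denote the preimage in $\cG_{S,D}$ of $\{1\}\times\prod_T \Sp(H_1(T))$ under $\cG_{S,D} \to \Sp(H_1(S))$ followed by the Levi projection $P \to \GL(\langle\gamma\rangle)\times \Sp(\langle\gamma\rangle^\perp/\langle\gamma\rangle)$. Then $\cG'$ is proalgebraic, contains the image of the glueing map, and is an extension of the reductive group $\prod_T \Sp(H_1(T))$ by a prounipotent group. Applying the universal property of relative completion to $\prod_T \G_{T,D_T} \to \prod_T \Sp(H_1(T))$ together with $\prod_T \G_{T,D_T} \to \cG'$ then yields the desired homomorphism $\prod_T \cG_{T,D_T} \to \cG' \hookrightarrow \cG_{S,D}$, and commutativity of the square is automatic.

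For the second assertion, the plan is to exploit the natural splittings provided by Theorem~\ref{thm:wt_mcg}. Using compatible splittings of the $W$-filtration on each $\g_{T,D_T}$ and of the bigraded filtration $(M_\dot^\gamma,W_\dot)$ on $\g_{S,D}$, the glueing map becomes a sum of graded maps $\Gr^W_m \g_{T,D_T} \to \bigoplus_k \Gr^M_k \Gr^W_m \g_{S,D}$. Strictness is then equivalent to the image being concentrated in the single $M$-graded piece $k=m$, which can be verified using the semi-simple element $\xi \in \Gr^M_0\Gr^W_0\g_{S,D}$ from Example~\ref{ex:wt_filt}: $\xi$ acts on $\Gr^M_k \Gr^W_m V$ (for any $\sp(H)$-sub-quotient $V$ of a tensor power of $H_1(S)$) by the scalar $k-m$, while $\xi$ acts as zero on $\langle\gamma\rangle^\perp/\langle\gamma\rangle$, which is where the symplectic $\prod_T \Sp(H_1(T))$-action takes place. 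Since the glueing map is $\prod_T \sp(H_1(T))$-equivariant and $\xi$ commutes with $\prod_T \sp(H_1(T))$, the image of any element of $\Gr^W_m \g_{T,D_T}$ is forced to be a $\xi$-eigenvector of eigenvalue $0$, placing it in $\Gr^M_m \Gr^W_m \g_{S,D}$ as required.

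The principal obstacle is the first assertion, specifically the identification of the proalgebraic subgroup $\cG' \subseteq \cG_{S,D}$. One cannot apply Proposition~\ref{prop:naturality} directly with $R_2 = \Sp(H_1(S))$ because the image of $\prod_T \G_{T,D_T}$ lies only in the non-reductive parabolic $P$. The workaround is to cut down to the preimage of the reductive subgroup $\prod_T \Sp(H_1(T))$ of the Levi of $P$, absorbing the unipotent radical of $P$ into the prounipotent part of the extension defining $\cG'$. This is delicate because the image of $\prod_T \G_{T,D_T}$ generally does have a nontrivial component in the unipotent radical of $P$ (corresponding to the ambiguity in lifting a cycle in a component of $S-|\gamma|$ modulo the curves $c_j$), so $\cG'$ must be chosen large enough to contain all such lifts while remaining an extension of the reductive $\prod_T \Sp(H_1(T))$ by a prounipotent group.
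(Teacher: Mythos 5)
Your proof of the first assertion matches the paper's argument in all essentials. Your $\cG'$ is the paper's group $\H$: the paper intersects with $M_0\cG_{S,D}$ first and then pulls back $\prod_T\Sp(H_T)\subset\Gr^M_0\Sp(H)$, whereas you pull back the reductive subgroup directly over the parabolic $M_0\Sp(H)$, but the universal mapping property applies equally to both. (One terminological slip: the \emph{pointwise} stabilizer of $\langle\gamma\rangle$ is not a parabolic subgroup of $\Sp(H)$ --- you in fact want the full setwise stabilizer, whose Levi is $\GL(\langle\gamma\rangle)\times\Sp(\langle\gamma\rangle^\perp/\langle\gamma\rangle)$, and then cut down inside the Levi; the construction you perform is the right one, just mislabeled.)

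The second assertion is where you part ways with the paper, and this is where the gap lies. The paper does not prove strictness directly: it cites the existence of limit mixed Hodge structures (or Galois equivariance of the profinite analogue) from \cite{hain-pearlstein} and \cite{hain-matsumoto:mcg}. Your attempted reduction to the bigraded level is not correct as stated. You claim the glueing map becomes, after splitting, a sum of maps $\Gr^W_m\g_{T,D_T}\to\bigoplus_k\Gr^M_k\Gr^W_m\g_{S,D}$, which presupposes that the glueing map preserves the weight filtration $W_\dot$. It does not. Take $c_j\in\gamma$ non-separating with $[c_j]\neq 0$ in $H_1(S)$, and let $\tau_j$ be the Dehn twist in $T$ about the boundary circle arising from $c_j$. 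Then $\tau_j\in T_{T,D_T}$, so $\log\tau_j\in\u_{T,D_T}=W_{-1}\g_{T,D_T}$, but its image $N_j=\log\rhohat(\tau_{c_j})$ has a nonzero projection to $\Gr^W_0\g_{S,D}\cong\sp(H)$ by Picard--Lefschetz, hence $N_j\notin W_{-1}\g_{S,D}$. So the map fails to preserve $W$-degree, and the strictness statement really does pit $W_\dot$ on the source against $M_\dot^\gamma$ on the target, as the proposition says, not against $W_\dot$. Moreover, even in the graded picture, the $\xi$-eigenvalue step is a non-sequitur: knowing that the map is $\prod_T\sp(H_T)$-equivariant and that $\xi$ centralizes $\prod_T\sp(H_T)$ only tells you that $\ad\xi$ restricted to the image is $\prod_T\sp(H_T)$-equivariant, not that it vanishes; since $\xi$ is not in the image of the glueing map, you have no a priori grip on $\ad\xi$ acting on that image. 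The correct form of this idea does appear in the Hodge/Galois proofs --- $\xi$ is the infinitesimal generator of a $\mathbb{G}_m$-action splitting the limit MHS, and one must establish that the glueing map is equivariant for \emph{that} action, which requires the full limit-MHS or Galois machinery, not just the representation-theoretic equivariance you invoke. At minimum you would need first to show $W_m\mapsto M_m$ at the filtered level, and that already requires an input beyond what you have written.
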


\begin{proof}
Here we prove the existence of the induced homomorphism $\prod_{T} \cG_{T,D_T}
\to \cG_{S,D}$. Its compatibility with the filtrations follows from the
existence of limit mixed Hodge structures \cite{hain-pearlstein} or,
alternatively, the Galois equivariance of the corresponding map of profinite
mapping class groups.

Set $H=H_1(S)$ and $H_T=H_1(T)$. The relative weight filtration $M_\dot :=
M_\dot^\gamma$ of $H$ satisfies
$$
\Gr^M_{-1} H = \oplus_T H_T \text{ and } M_{-2}H = \langle \gamma \rangle.
$$
Set
$$
M_m \Sp(H) := \{\phi \in \Sp(H) : (\phi-\id)(M_k H)\subseteq M_{k+m} H\}.
$$
This is the subgroup of $\Sp(H)$ with Lie algebra $M_m\sp(H)$. The Zariski
closure of the image of $\prod_{T} \G_{T,D_T}$ in $\Sp(H)$ is contained in $M_0
\Sp(H)$ and is the extension of the subgroup $\prod_T \Sp(H_T)$ of $\Gr^M_0
\Sp(H)$ by a unipotent subgroup of $M_{-2}\Sp(H)$. Denote by $\H$ the inverse
image of $\prod_{T} \G_{T,D_T}$ under the surjection $M_0\cG_{S,D} \to \Gr^M_0
\Sp(H)$. It is an extension of $\prod_{T} \G_{T,D_T}$ by the prounipotent group
$M_{-2}\cG_{S,D}$.

The completion of $\prod_{T} \G_{T,D_T}$ with respect to the natural
homomorphism $\prod_T \G_{T,D_T} \to \prod_T \Sp(H_T)$ is $\prod_{T}
\cG_{T,D_T}$. The universal mapping property of relative completion implies that
the homomorphism $\prod_{T} \G_{T,D_T} \to \H$ induces a homomorphism $\prod_{T}
\cG_{T,D_T} \to \H$ such that the diagram
$$
\xymatrix{
\prod_{T} \G_{T,D_T} \ar[d]\ar[rr] && \G_{S,D} \ar[d] \cr
\prod_{T} \cG_{T,D_T} \ar[r] & \H\ar[r] & \cG_{S,D}
}
$$
commutes.
\end{proof}

There is a more elaborate version of the Glueing Lemma that applies to groups
that contain
$$
\prod_{T} \G_{T,D_T}^{G_T} 
$$
as a finite index subgroup, where $G_T \subseteq \Aut D_T$, and which map to
$\G^G_{S,D}$ for certain $G\subseteq \Aut D$. Rather than formulate such a
result in general, we now state and prove a very special case that we shall need
when investigating handlebody subgroups of $\G_{S,D}$.

Consider the decorated surface $(S,D)$ of genus $h-1$ with one marked boundary
component that is constructed as the double covering of the 2-sphere branched at
the $2h$th roots of unity, $\bmu_{2h}$, and with the disk of radius $1/2$
removed from one of the branches. This surface can be described as the Riemann
surface of the algebraic function
$$
y^2 = x^{2h} - 1
$$
with the disk $|x| < 1/2$ removed from one of the two branches. The marked
boundary point is chosen to be $x=1/2$.

\begin{figure}[!ht]
\epsfig{file=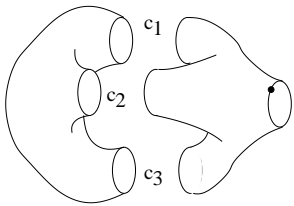, width=1.5in}
\caption{The surface $S-|\gamma|$ when $h=3$}\label{fig:twist}
\end{figure}

For $j=1,\dots,h$, let $c_j$ be the circle in $S$ that is the inverse image in
$S$ of the interval $[\zeta^{2j-1},\zeta^{2j}]$ in $\C$, where $\zeta=\exp{2\pi
i/2h}$. Then $\gamma := \{c_1,\dots,c_h\}$ is a curve system that separates $S$
into two genus $0$ subsurfaces $(T_0,D_0)$ and $(T_1,D_1)$, where
$$
D_0 = \{c_1,\dots,c_h\} \text{ and } D_1 = \{\partial S\} \cup D_0.
$$
The case $h=3$ is illustrated in Figure~\ref{fig:twist}.

The group $\bmu_h$ of $h$th roots of unity acts on $S$: $\zeta^{2j} : (x,y)
\mapsto (\zeta^{2j}x,y)$. It acts on $D_0$ and $D_1$ by taking $c_j$ to
$c_{j+1}$, where the indices are considered mod $h$. Denote the natural
homomorphism $\G^G_{T_j,D_j} \to \bmu_h$ by $p_j$. Set
$$
[\G_{T_0,D_0} \times \G_{T_1,D_1}]^{\bmu_h} =
\{(\phi,\phi_2) \in \G^{\bmu_h}_{T_0,D_0} \times \G^{\bmu_h}_{T_1,D_1} :
p_1(\phi_1) = p_2(\phi_2)\}. 
$$
There is an obvious glueing homomorphism
\begin{equation}
\label{eqn:prod}
[\G_{T_0,D_0} \times \G_{T_1,D_1}]^{\bmu_h} \to \G_{S,D}.
\end{equation}

The completion of $[\G_{T_0,D_0} \times \G_{T_1,D_1}]^{\bmu_h}$ with respect to
the natural homomorphism to $\bmu_h$ is easily seen to be the restriction of
$\cG^{\bmu_h}_{T_0,D_0}\times \cG^{\bmu_h}_{T_1,D_1}$ to the diagonal of $\bmu_h
\times \bmu_h$. Denote it by
$$
[\cG^{\bmu_h}_{T_0,D_0}\times \cG^{\bmu_h}_{T_1,D_1}]^{\bmu_h}.
$$

\begin{proposition}
The homomorphism (\ref{eqn:prod}) induces a homomorphism
$$
[\cG^{\bmu_h}_{T_0,D_0}\times \cG^{\bmu_h}_{T_1,D_1}]^{\bmu_h} \to \cG_{S,D}
$$
that is strictly compatible with the induced mapping
$$
(\O(\cG_{S,D}),M_\dot^\gamma) \to
(\O([\cG^{\bmu_h}_{T_0,D_0}\times \cG^{\bmu_h}_{T_1,D_1}]^{\bmu_h}),W_\dot).
$$
\end{proposition}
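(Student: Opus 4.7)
The plan is to parallel the proof of the Glueing Lemma (Proposition~\ref{prop:glue}), upgraded to capture the $\bmu_h$-equivariance through the universal mapping property of relative completion. Set $H=H_1(S)$ and $M_\dot = M_\dot^\gamma$. Each element of $[\G_{T_0,D_0}\times\G_{T_1,D_1}]^{\bmu_h}$ is represented by a diffeomorphism of $S$ that preserves $|\gamma|$ setwise and permutes the $c_j$ cyclically according to its image in $\bmu_h$; hence the composition
$$
[\G_{T_0,D_0}\times\G_{T_1,D_1}]^{\bmu_h} \to \G_{S,D} \xrightarrow{\rho} \Sp(H)
$$
lands in the stabilizer $M_0\Sp(H)$ of the filtration $M_\dot$. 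Since $g(S)=h-1$ and the disjoint, non-separating curves $c_j$ satisfy only the relation $\sum_j[c_j]=0$ in $H$, the subspace $\langle\gamma\rangle$ is Lagrangian of dimension $h-1$; consequently $\Gr^M_{-1} H = 0$ and $\Gr^M_0\Sp(H) \cong \GL(H/\langle\gamma\rangle)$. The image there of our discrete group is the finite (hence Zariski-closed) subgroup $\bmu_h$ acting by cyclic permutation.

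Let $\H\subseteq \cG_{S,D}$ denote the pullback of this $\bmu_h\subseteq \Gr^M_0\Sp(H)$ along the natural surjection $M_0\cG_{S,D}\twoheadrightarrow \Gr^M_0\Sp(H)$, whose kernel is prounipotent. Then $\H$ is an extension of the reductive (finite) group $\bmu_h$ by a prounipotent group, and the composition of $(\ref{eqn:prod})$ with $\rhohat:\G_{S,D}\to \cG_{S,D}$ factors as a homomorphism $[\G_{T_0,D_0}\times\G_{T_1,D_1}]^{\bmu_h}\to\H$, compatibly with the two natural projections onto $\bmu_h$. The universal mapping property of relative completion with reductive target $\bmu_h$ therefore produces a unique homomorphism of proalgebraic groups
$$
[\cG^{\bmu_h}_{T_0,D_0}\times\cG^{\bmu_h}_{T_1,D_1}]^{\bmu_h} \longrightarrow \H \hookrightarrow \cG_{S,D}
$$
extending the discrete glueing map.

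The hard part will be strict compatibility of the induced coordinate-ring map with $M_\dot^\gamma$ on $\O(\cG_{S,D})$ and $W_\dot$ on the target. My plan is to reduce to associated bigraded objects: Theorem~\ref{thm:wt_mcg} provides simultaneous splittings of $M_\dot^\gamma$ and $W_\dot$, and Corollary~\ref{cor:rational} identifies $M_\dot=W_\dot$ on $\O(\cG^{\bmu_h}_{T_i,D_i})$ since each $T_i$ has genus $0$. Proposition~\ref{prop:glue} already delivers the required strictness on the finite-index normal subgroup $\G_{T_0,D_0}\times\G_{T_1,D_1}$; because $\bmu_h$ is finite, taking $\bmu_h$-invariants is an exact, filtration-preserving operation, so strictness propagates to the $\bmu_h$-fibered product. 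Alternatively, and as in the proof of Proposition~\ref{prop:glue}, strictness can be deduced directly from the compatible limit mixed Hodge structures of \cite{hain-pearlstein} or from the Galois-equivariance of the profinite analogue, with the $\bmu_h$-action playing a purely decorative role in the tannakian formalism.
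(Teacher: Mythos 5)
Your existence argument correctly fills in what the paper leaves as ``an argument similar to the proof of Proposition~\ref{prop:glue}'' --- the key observations being that $\gamma$ is rational, so $\Gr^M_{-1}H_1(S)=0$ and $\Gr^M_0\Sp(H)\cong\GL(H/\langle\gamma\rangle)$, that the (unipotent) image of $\G_{T_0,D_0}\times\G_{T_1,D_1}$ dies in $\Gr^M_0\Sp(H)$, and that the residual image is the finite, hence reductive and Zariski-closed, group $\bmu_h$, whose preimage in $M_0\cG_{S,D}$ is the prounipotent-by-finite target needed for the universal mapping property. Your fallback strictness argument, via limit mixed Hodge structures or Galois equivariance, is exactly what the paper invokes, so the approach is the same and the proof stands.

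One caveat about your proposed ``alternative'' strictness route. Proposition~\ref{prop:glue} controls the composite $\O(\cG_{S,D}) \to \O(\cG_{T_0,D_0}\times\cG_{T_1,D_1})$, which is restriction of functions to the identity component of $[\cG^{\bmu_h}_{T_0,D_0}\times\cG^{\bmu_h}_{T_1,D_1}]^{\bmu_h}$. Strictness of a composite $A\to B\to C$ does not in general give strictness of $A\to B$, and passing to $\bmu_h$-invariants does not recover the identity-component algebra: since the component group is $\bmu_h$, the coordinate ring of the full fibered product decomposes as a direct sum of $h$ copies of $\O(\cG_{T_0,D_0}\times\cG_{T_1,D_1})$, one per component, and it is the filtration on this entire sum --- not on an invariant subspace --- that must be controlled. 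This first route therefore needs more justification to be self-contained; as written, your proof rests on the second route, which is the paper's.
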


The existence of the induced homomorphism is proved by an argument similar to
the proof of Proposition~\ref{prop:glue}. The strictness with respect to the
filtrations follows from either Hodge theory or Galois theory.

Since $T_0$ and $T_1$ are spheres, Corollary~\ref{cor:rational} implies that
$\cG^{\bmu_h}_{T_j,D_j} = W_0 \cG^{\bmu_h}_{T_j,D_j}$. This gives the following
important consequence of strictness.

\begin{corollary}
The image of $[\cG^{\bmu_h}_{T_0,D_0}\times \cG^{\bmu_h}_{T_1,D_1}]^{\bmu_h}
\to \cG_{S,D}$ lies in $M_0 \cG_{S,D}$. $\Box$
\end{corollary}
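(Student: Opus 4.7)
The plan is to deduce this directly from the strictness statement in the preceding proposition combined with the genus zero Corollary~\ref{cor:rational} applied to each factor. Since $M_0\cG_{S,D}$ is defined as $\Spec\bigl(\O(\cG_{S,D})/M^\gamma_{-1}\O(\cG_{S,D})\bigr)$, a morphism $\phi$ has image contained in $M_0\cG_{S,D}$ if and only if the pullback $\phi^\ast$ annihilates the Hopf ideal $M^\gamma_{-1}\O(\cG_{S,D})$. So the whole task reduces to verifying this vanishing.

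First I would invoke the strictness asserted in the preceding proposition: $\phi^\ast$ carries $M^\gamma_\dot$ on $\O(\cG_{S,D})$ into $W_\dot$ on $\O\bigl([\cG^{\bmu_h}_{T_0,D_0}\times\cG^{\bmu_h}_{T_1,D_1}]^{\bmu_h}\bigr)$ with strict compatibility; in particular,
$$
\phi^\ast\bigl(M^\gamma_{-1}\O(\cG_{S,D})\bigr)
\subseteq
W_{-1}\O\bigl([\cG^{\bmu_h}_{T_0,D_0}\times\cG^{\bmu_h}_{T_1,D_1}]^{\bmu_h}\bigr).
$$
So it suffices to prove the right hand side is zero.

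For this, I would use that $T_0$ and $T_1$ are spheres. Let $\gamma_j$ be the curve system on $(T_j,D_j)$ induced from $\gamma$. By Corollary~\ref{cor:rational}, in genus zero the relative weight filtrations coincide with the natural weight filtration, and moreover $\cG^{\bmu_h}_{T_j,D_j}=M^{\gamma_j}_0\cG^{\bmu_h}_{T_j,D_j}$. Translating back through the definition $M_0\cG = \Spec(\O/M_{-1}\O)$, this equality forces $W_{-1}\O(\cG^{\bmu_h}_{T_j,D_j})=0$ for each $j$; that is, the natural weight filtration on each factor is concentrated in non-negative weights. Under the tensor-product rule
$$
W_{-1}\bigl(\O(\cG^{\bmu_h}_{T_0,D_0})\otimes\O(\cG^{\bmu_h}_{T_1,D_1})\bigr)
=\sum_{i+j=-1}W_i\O(\cG^{\bmu_h}_{T_0,D_0})\otimes W_j\O(\cG^{\bmu_h}_{T_1,D_1}),
$$
every summand vanishes because at least one of $i,j$ is negative. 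Since the coordinate ring of the fiber product $[\cG^{\bmu_h}_{T_0,D_0}\times\cG^{\bmu_h}_{T_1,D_1}]^{\bmu_h}$ is a filtration-preserving quotient of the tensor product, its $W_{-1}$ vanishes as well.

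Combining these steps gives $\phi^\ast\bigl(M^\gamma_{-1}\O(\cG_{S,D})\bigr)=0$, which is the claim. The only genuine point of friction I anticipate is index bookkeeping: one must verify carefully that the identification $\cG=M_0\cG$ on a genus zero factor really translates to the vanishing of $W_{-1}$ on the coordinate ring, and that the tensor-product and subgroup-quotient operations behave as expected with respect to the weight filtration. Once these conventions are aligned, the corollary is essentially a one-line consequence of strictness.
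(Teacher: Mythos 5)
Your proof is correct and takes essentially the same route as the paper, whose own justification is the single sentence preceding the corollary: Corollary~\ref{cor:rational} plus the strictness in the glueing proposition. You have simply made the translation to coordinate rings explicit---converting $\cG^{\bmu_h}_{T_j,D_j}=M^{\gamma_j}_0\cG^{\bmu_h}_{T_j,D_j}$ into $W_{-1}\O(\cG^{\bmu_h}_{T_j,D_j})=0$, propagating this through the tensor product and the subgroup coordinate ring, and noting that a filtration-preserving pullback must annihilate the Hopf ideal $M^\gamma_{-1}\O(\cG_{S,D})$.
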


Define $\phi_h \in \G_{S,D}$ to be the mapping class of the diffeomorphism
$$
(x,y) \mapsto (e^{2\pi i /h}x,y)
$$
of $S$ composed with $1/h$th of the inverse of the Dehn twist about $\partial
T$. This fixes the boundary point $1/2$. Observe that $\phi_h^h$ is the inverse
of the Dehn twist about $\partial T$. Since $\phi_h$ lies in the image of
(\ref{eqn:prod}), we have:

\begin{proposition}
\label{prop:phi}
The image of $\phi_h$ in $\cG_{S,D}$ lies in $M_0\cG_{S,D}$.
\end{proposition}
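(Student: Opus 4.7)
The plan is to make precise the parenthetical remark made just before the statement, that $\phi_h$ lies in the image of the glueing map (\ref{eqn:prod}), and then to invoke the preceding Corollary.

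First, I would check that the underlying diffeomorphism $(x,y)\mapsto(e^{2\pi i/h}x,y)$ preserves the curve system $\gamma$. Acting on the branch locus $\bmu_{2h}$ by $\zeta^k\mapsto\zeta^{k+2}$, this rotation permutes the intervals $[\zeta^{2j-1},\zeta^{2j}]$ cyclically, and hence also their preimages $c_1,\dots,c_h$. It therefore preserves the decomposition $S = T_0\cup T_1\cup|\gamma|$ setwise; its restrictions to $T_0$ and to $T_1$ each realise the same generator of $\bmu_h$ on the common decoration set $\{c_1,\dots,c_h\}$.

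Second, the correction by $\tfrac{1}{h}$ of the inverse Dehn twist about $\partial S$ is supported in a collar of $\partial S$, hence entirely inside $T_1$, and fixes each $c_j$. So this correction can be absorbed into the $T_1$-factor without changing the $\bmu_h$-label of either side. Consequently $\phi_h$ is the image under (\ref{eqn:prod}) of a pair $(\phi_0,\phi_1)\in[\G_{T_0,D_0}\times\G_{T_1,D_1}]^{\bmu_h}$ whose two components both induce the generator of $\bmu_h$.

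Finally, by naturality of relative completion (Proposition~\ref{prop:naturality}), the image of $(\phi_0,\phi_1)$ in $\cG_{S,D}$ factors through $[\cG^{\bmu_h}_{T_0,D_0}\times\cG^{\bmu_h}_{T_1,D_1}]^{\bmu_h}$. The preceding Corollary then places this image, and hence $\phi_h$, in $M_0\cG_{S,D}$.

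The only thing that needs care is the bookkeeping: one must confirm that the boundary-supported Dehn-twist correction does not create a new $\bmu_h$-equivariance constraint on the decoration $\partial S$ of $T_1$. But $\phi_h$ fixes the marked boundary point $x=1/2$ pointwise (this is precisely what the $\tfrac{1}{h}$-twist correction accomplishes), so no additional compatibility beyond the cyclic permutation of $\{c_1,\dots,c_h\}$ is required, and the argument goes through.
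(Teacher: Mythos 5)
Your proposal is correct and matches the paper's approach. The paper's proof of this proposition is the single sentence ``Since $\phi_h$ lies in the image of (\ref{eqn:prod}), we have:'' followed immediately by the statement, leaving the reader to verify that the rotation permutes the $c_j$ cyclically with the same $\bmu_h$-label on both sides and that the boundary-collar fractional twist correction lives in the $T_1$-factor without disturbing the compatibility condition; you have spelled out precisely that bookkeeping and then invoked the preceding Corollary, which is exactly what the paper intends.
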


We shall also need a certain diffeomorphism $\psi$ of a surface $S$ of genus 2
with one boundary component. It is convenient to take $S$ to the Riemann surface
$$
y^2 = (x^2-1)(x^2-4)(x^2-9)
$$
with one of the two preimages of the disk $|x|<1/2$ removed. It is illustrated
in Figure~\ref{fig:psi}.
\begin{figure}[!ht]
\epsfig{file=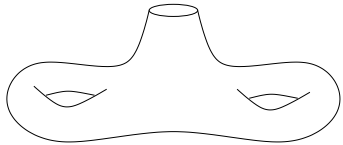, width=1.75in}
\caption{The diffeomorphism $\psi$}\label{fig:psi}
\end{figure}
The element $\psi$ of $\G_{S,\partial S}$ is the composition of the
diffeomorphism $(x,y) \mapsto (-x,y)$ of $S$ with the square root of the inverse
of the Dehn twist about the boundary of $S$. In terms of the illustration in
Figure~\ref{fig:psi}, it is obtained by rotating $S$ by $\pi$ about the vertical
axis of the boundary circle and composing with the square root of the inverse of
the Dehn twist about the boundary of $S$. An argument similar to the one used to
prove Proposition~\ref{prop:phi} can be used to prove:

\begin{proposition}
\label{prop:psi}
The image of $\psi$ in $\cG_{S,\partial S}$ lies in $M_0\cG_{S,\partial S}$.
\end{proposition}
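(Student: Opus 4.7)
The plan is to parallel the proof of Proposition~\ref{prop:phi}, replacing the cyclic group $\bmu_h$ by $\bmu_2 = \langle\sigma\rangle$, where $\sigma$ is the involution $(x,y)\mapsto(-x,y)$ of $S$. By construction $\psi^2$ equals the inverse of the Dehn twist about $\partial S$, so $\psi$ plays the same sort of ``fractional twist'' role for $\bmu_2$ that $\phi_h$ played for $\bmu_h$, and one aims to exhibit $\psi$ as an element of a glueing-map image that is forced into $M_0\cG_{S,\partial S}$.

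First I would exhibit a $\sigma$-invariant curve system $\gamma$ on $(S,\partial S)$ whose complement decomposes into genus-zero subsurfaces. A natural candidate is built from $\sigma$-equivariantly chosen simple closed curves in the base $\mathbb{P}^1$ bounding disc neighbourhoods of the three disjoint arcs $[-3,-2]$, $[-1,1]$, $[2,3]$ joining the six branch points, together with lifts to $S$ chosen so that $\gamma$ is $\sigma$-invariant and avoids the removed disc around $(0,6i)$. An appropriate choice of lifts produces a pants decomposition of the closed surface $\widetilde S$ into two pairs of pants; removing the disc then converts one of the pants into a four-holed sphere, but both pieces remain of genus zero. The $\sigma$-orbits on $\gamma$ consist of singletons or swapped pairs depending on the parity of the branch-point count enclosed by the underlying base circles (the dichotomy computed in the Picard--Lefschetz analysis for circles $|x|=r$).

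Next, invoking the $\bmu_2$-equivariant glueing lemma (a straightforward variant of the construction immediately preceding Proposition~\ref{prop:phi}) yields a homomorphism
\begin{equation*}
\Big[\prod_T \G^{\bmu_2}_{T,D_T}\Big]^{\bmu_2}\longrightarrow \G_{S,\partial S},
\end{equation*}
whose induced map on relative completions is strictly compatible with $W_\dot$ on the source and $M^\gamma_\dot$ on the target, by either Hodge- or Galois-theoretic input as in the proof of the corresponding proposition in Section~\ref{sec:glueing}. Since each piece $T$ is of genus zero, Corollary~\ref{cor:rational} gives $\cG^{\bmu_2}_{T,D_T} = M_0\cG^{\bmu_2}_{T,D_T}$, and strictness then forces the image of the glueing map to lie inside $M_0\cG_{S,\partial S}$. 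Finally, one verifies that $\psi$ lies in this image: $\sigma$ restricts to an element of each $\G^{\bmu_2}_{T,D_T}$ (or permutes $\sigma$-swapped pieces equivariantly) with compatible image in $\bmu_2$, and the inverse half-twist about $\partial S$ is absorbed at the level of the piece containing $\partial S$, exactly in the way the fractional $h$-th-root twist for $\phi_h$ is absorbed in the $\bmu_h$-equivariant product. This places $\psi$ in $M_0\cG_{S,\partial S}$.

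The main obstacle is the combinatorial step of exhibiting the $\sigma$-invariant, genus-zero decomposition compatible with the boundary data --- in particular, choosing the lifts to $S$ that are genuinely $\sigma$-permuted and that avoid the disc removed around $(0,6i)$, which rules out the natural $\sigma$-invariant vanishing cycle over the arc $[-1,1]$ (it passes through $(0,6i)$) and forces one to work instead with $\sigma$-swapped pairs of lifts. Once that decomposition is in place, every other ingredient --- the $\bmu_2$-equivariant glueing lemma, strict compatibility with $M_\dot^\gamma$, and the collapse $\cG^{\bmu_2}_T = M_0\cG^{\bmu_2}_T$ for genus-zero $T$ --- transports directly from the proof of Proposition~\ref{prop:phi}.
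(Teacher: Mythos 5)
Your proposal follows the paper's intended argument, which is deliberately terse ("an argument similar to the one used to prove Proposition~\ref{prop:phi}"): replace $\bmu_h$ by $\bmu_2 = \langle\sigma\rangle$ with $\sigma:(x,y)\mapsto(-x,y)$, exhibit a $\sigma$-invariant rational curve system $\gamma$ avoiding the removed disc, invoke the $\bmu_2$-equivariant analogue of the glueing lemma together with Corollary~\ref{cor:rational} to place the image of the glueing map inside $M^\gamma_0\cG_{S,\partial S}$, and observe that $\psi$ lies in that image. That structural outline is correct and matches the paper.

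The gap — which you flag yourself as ``the main obstacle'' but do not resolve — is the choice of $\gamma$. Circles in $\P^1$ bounding disc neighbourhoods of the arcs $[-3,-2]$, $[-1,1]$, $[2,3]$ do not yield a $\sigma$-invariant rational curve system. The circle around $[-1,1]$, say $|x|=r$ with $1<r<2$, has two lifts to $S$, and $\sigma$ interchanges them: as $x$ goes once around this circle, $x^2$ winds once around exactly one zero of $(x^2-1)(x^2-4)(x^2-9)$, so the branch of $y$ changes sign under $x\mapsto -x$. Hence no single lift is $\sigma$-invariant, and the vanishing cycle over $[-1,1]$ itself passes through the removed disc near $(0,6i)$. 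Your suggested fix via ``$\sigma$-swapped pairs of lifts'' is not worked out and would in any case give a less economical decomposition.

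The clean choice is the complementary family: let $c_1$ and $c_2$ be the preimages in $S$ of the intervals $[-2,-1]$ and $[1,2]$, and let $c_3$ be the preimage of the arc from $3$ to $-3$ through $\infty$. These circles lie over $|x|\ge 1$ and so miss the removed disc; the set $\gamma=\{c_1,c_2,c_3\}$ is $\sigma$-invariant, with $\sigma$ swapping $c_1 \leftrightarrow c_2$ and fixing $c_3$; and $S\setminus|\gamma|$ is a union of two genus-zero pieces, a pair of pants and a four-holed sphere containing $\partial S$, each preserved by $\sigma$. This is the precise analogue of the curve system used for $\phi_h$. With that correction, the rest of your argument --- the $\bmu_2$-equivariant glueing lemma, Corollary~\ref{cor:rational}, strictness with respect to $M^\gamma_\dot$, and the observation that $\psi$ (the involution $\sigma$ composed with a half boundary twist supported in the piece containing $\partial S$) lies in the image of the glueing map --- goes through exactly as in the proof of Proposition~\ref{prop:phi}.
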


\section{Handlebodies and Relative Weight Filtrations}
\label{sec:handle_filt}

A curve system $\gamma = \{c_0,\dots,c_m\}$ on a stable decorated surface
$(S,D)$ is said to be {\em rational} if each connected component of $S -
|\gamma|$ is a genus 0 surface.\footnote{This is equivalent to the condition
that the nodal surface $S/\gamma$ obtained from $S$ by collapsing each $c\in
\gamma$ to point has the topological type of a stable rational curve. It is also
equivalent to the condition that $\Gr^M_{-1}H_1(S) = 0$.}

\begin{lemma}
If $\gamma$ is a rational curve system on a stable decorated surface $(S,D)$,
then there is a unique handlebody $U_\gamma$ such that $S = \partial U_\gamma$
and each curve $c \in \gamma$ bounds a disk in $U$.\footnote{That is, if $U_1$
and $U_2$ are handlebodies with $\partial U_1 = \partial U_2 = S$ where each
$c\in \gamma$ bounds in each $U_j$, then there is a homeomorphism $f : U_1 \to
U_2$ that is the identity on $S$.}
\end{lemma}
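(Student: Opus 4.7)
The plan is to prove existence by an explicit handle construction and uniqueness by cutting along disk systems.

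For existence I will construct $U_\gamma$ as follows. Start with the thickening $S \times [0,1]$ and, for each $c_j \in \gamma$, attach a $2$-handle along $c_j \times \{1\}$ using the surface framing. The upper boundary of the resulting $3$-manifold is a disjoint union, one component for each connected component $T$ of $S - |\gamma|$, and this component is obtained from the closure of $T$ by capping each of its boundary circles with a disk. Because $\gamma$ is rational, each such $T$ has genus $0$, so every upper-boundary component is a $2$-sphere. Cap each of these with a $3$-ball to obtain a compact $3$-manifold $U_\gamma$ whose only boundary is $S \times \{0\}=S$, and in which each $c_j$ bounds the core disk of the corresponding $2$-handle. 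Turning the handle decomposition upside down, $U_\gamma$ is built out of $0$-handles (the capping balls) and $1$-handles (the dual $2$-handles), so $U_\gamma$ collapses onto the dual graph $G$ of the nodal curve $S/\gamma$, whose vertices are the components of $S - |\gamma|$ and whose edges are the $c_j$. A standard Euler characteristic computation, using that each component of $S - |\gamma|$ is planar, shows $b_1(G) = g(S)$; hence $U_\gamma$ is a handlebody of genus $g(S)$ with boundary $S$.

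For uniqueness, suppose $U_1$ and $U_2$ are two handlebodies with $\partial U_1 = \partial U_2 = S$ such that each $c \in \gamma$ bounds a disk in each $U_k$. For each $j$ and each $k \in \{1,2\}$, choose a properly embedded disk $D_j^{(k)} \subset U_k$ with $\partial D_j^{(k)} = c_j$; by a small isotopy the family $\{D_j^{(k)}\}_j$ may be taken pairwise disjoint in $U_k$. Cut $U_k$ along $\bigcup_j D_j^{(k)}$. The resulting pieces correspond bijectively to components $T$ of $S - |\gamma|$, and the boundary of the piece attached to $T$ is obtained from $\overline{T}$ by capping each boundary circle with two parallel copies of the corresponding disk; since $T$ has genus $0$, this boundary is a $2$-sphere. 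By the standard fact that cutting a handlebody along a disjoint system of compressing disks whose boundaries are homologically independent yields a disjoint union of $3$-balls (combined with Alexander's theorem), each piece $B_T^{(k)}$ is a $3$-ball.

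To build the homeomorphism $f : U_1 \to U_2$ extending the identity on $S$, I proceed ball by ball. For each pair of disks $D_j^{(1)}, D_j^{(2)}$, fix a homeomorphism $D_j^{(1)} \to D_j^{(2)}$ that is the identity on the common boundary $c_j$. These piece together with $\mathrm{id}_S$ to give a homeomorphism $\partial B_T^{(1)} \to \partial B_T^{(2)}$ for each component $T$. By the Alexander trick (equivalently, the Schönflies theorem for $3$-balls), each of these boundary homeomorphisms extends to a homeomorphism $B_T^{(1)} \to B_T^{(2)}$. The extensions agree along the cutting disks by construction, so they glue to the desired homeomorphism $U_1 \to U_2$ restricting to $\mathrm{id}_S$. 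The main technical point to watch is that the compressing disks $D_j^{(k)}$ may be chosen disjoint and that the resulting pieces are genuinely balls; this is a classical fact about handlebodies, and the rationality of $\gamma$ is precisely what ensures that the pieces have spherical boundary so that Alexander's theorem applies.
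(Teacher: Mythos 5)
Your proof is correct and takes essentially the same route the paper compresses into one sentence: cut along disks bounded by the $c_j$, observe that rationality makes every resulting piece a handlebody with spherical boundary (hence a $3$-ball), and use the uniqueness of the ball as a cone over its boundary sphere (the Alexander trick) to assemble a homeomorphism $U_1 \to U_2$ that is the identity on $S$. Two small phrasings worth tightening: making the compressing disks $D_j^{(k)}$ pairwise disjoint is an innermost-disk isotopy using irreducibility of the handlebody, not a ``small'' isotopy, and the reason the cut pieces are balls is that cutting a handlebody along disjoint compressing disks yields handlebodies whose boundaries here are spheres by genus zero of the $T$'s --- homological independence of the $\partial D_j^{(k)}$ is not the relevant hypothesis.
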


\begin{proof}
This follows directly from the elementary fact that an oriented $2$-sphere
bounds a handlebody (necessarily a $3$-ball) in a unique way, as the handle body
is the cone over $S$.
\end{proof}

A maximal curve system on a stable decorated surface $(S,D)$ is called a {\em
pants decomposition} of $S_D'$. If $\gamma$ is a pants decomposition of $S_D'$,
then each component of $S_D'-|\gamma|$ is a sphere with $r$ boundary components
and $n$ punctures, where $r+n=3$. Thus pants decompositions are rational and
determine a handlebody $U_\gamma$. It is proved in \cite{hatcher}that any two
pants decompositions of $S$ can be joined by $A$-moves and $S$-moves. It is
clear that $A$-moves leave the handlebody $U_\gamma$ unchanged while $S$-moves
change the handlebody.

\begin{figure}[!ht]
\epsfig{file=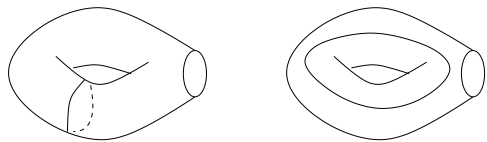, width=2.5in}
\caption{An $S$-move}\label{fig:smove}
\end{figure}

\begin{theorem}
\label{thm:pants}
Two pants decompositions of a decorated stable surface determine the same
handlebody if and only if they can be joined by $A$-moves.
\end{theorem}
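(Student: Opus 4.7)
The forward direction is direct. In an $A$-move, the four curves $c_1, c_2, c_3, c_4$ bounding the 4-holed sphere $T$ lie in $(\gamma' \cap \gamma'') \cup V$, so each either bounds a disk in both $U_{\gamma'}$ and $U_{\gamma''}$ or is a boundary component of $S$. The component of $U_{\gamma'}$ (resp.\ $U_{\gamma''}$) lying ``above'' $T$ is therefore a $3$-ball bounded by the $4$-punctured sphere $T$. Any essential simple closed curve on a $4$-punctured sphere bounds a disk in any $3$-ball it bounds, so $c_0'$ and $c_0''$ both bound disks in both $U_{\gamma'}$ and $U_{\gamma''}$. By the uniqueness (up to homeomorphism fixing $S$) of the handlebody determined by a rational curve system, $U_{\gamma'} = U_{\gamma''}$.

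For the converse, suppose $U_{\gamma_1} = U_{\gamma_2} = U$. By the Hatcher--Thurston theorem, $\gamma_1$ and $\gamma_2$ can be connected by a finite sequence of $A$-moves and $S$-moves; the goal is to eliminate all $S$-moves from such a sequence. The first step is a local rigidity claim: a single $S$-move always changes the handlebody. Indeed, an $S$-move is supported in a one-holed torus $T' \subset S$ whose boundary curve $b$ is common to the two pants decompositions, and hence bounds a disk in each associated handlebody. The piece of either handlebody lying above $T'$ is thus a solid torus, in whose boundary the only isotopy class of essential simple closed curve bounding a disk is the meridian. Since the two curves exchanged by an $S$-move intersect transversely in exactly one point, they cannot both be meridians of the same solid torus, producing a contradiction.

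With individual $S$-moves understood, the real work is to cancel them in pairs. One plan is to invoke the $2$-dimensional structure of the Hatcher--Thurston complex (the $2$-cells giving the relations used in their proof of simple connectedness): these cells allow one to commute $A$-moves past $S$-moves and to rewrite short subsequences so that two $S$-moves whose net effect preserves the handlebody can be collapsed into an $A$-move only subsequence. A cleaner alternative is via Cerf theory on the handlebody $U$: a pants decomposition of $\partial U$ that bounds in $U$ arises from a Morse function on $U$ with prescribed numbers of critical points of each index, and the space of such Morse functions is connected. A generic path between two such functions crosses only codimension-one strata (handle slides, birth--death pairs), each of which induces an $A$-move on the associated pants decomposition of the level surface.

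The principal obstacle is precisely this final cancellation step. The forward direction and the fact that each $S$-move alters the handlebody are elementary, but upgrading ``$S$-moves in a sequence relating $\gamma_1,\gamma_2 \in \mathcal{P}_U$ must come in pairs'' to ``they can be removed altogether, introducing only $A$-moves'' requires either a careful analysis of the Hatcher--Thurston $2$-cells restricted to the subcomplex of pants decompositions inducing a fixed handlebody $U$, or the Cerf-theoretic connectedness argument outlined above.
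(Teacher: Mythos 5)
Your forward direction is fine and essentially matches the paper's (the paper disposes of it in one sentence: ``It is clear that $A$-moves do not change the handlebody''). Your observation that any single $S$-move changes the handlebody is correct and a nice remark, but it is not in the paper and, as you yourself note, it does not by itself let you cancel $S$-moves from a path in the pants complex: knowing that $S$-moves come in ``canceling pairs'' in some homological sense is very far from being able to remove them while replacing them only by $A$-moves.

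The converse direction is where the proposal genuinely falls short, and you are honest that it does. You offer two strategies in outline. Your first (pushing the $S$-moves around using the $2$-cells of the Hatcher--Thurston complex) is not what the paper does and has its own difficulties --- you would need to check that the subcomplex of pants decompositions bounding in a fixed $U$ is connected, which is in effect the theorem you are trying to prove. Your second strategy (Cerf theory on $U$) is in the right spirit, and indeed the paper's proof is a Morse/Cerf-theoretic argument. But the paper's actual construction differs from your sketch in substance, not just detail. First, the relevant Morse functions $F:U\to[0,1]$ have \emph{no} critical points in the interior of $U$ (they vanish on the disks $\D_v$ capping off the boundary of $S$, and $F|_S$ is Morse); your phrase ``Morse function on $U$ with prescribed numbers of critical points of each index'' is the wrong class of functions. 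Second, the connectedness of this restricted class of Morse functions is not free: the paper embeds $U$ in a larger $3$-manifold $M = S^3 - \bigcup B^3$, joins the two extended Morse functions by a generic path of functions on $M$, and then uses a dimension count (the critical locus $\Sigma$ has relative dimension $0$ over $[0,1]$, while the core graph $\Gamma\times[0,1]$ has relative dimension $1$) to isotope the regular neighborhood $U$ so that the family has no interior critical points. Third, even after this, the resulting family need not be convex, and the translation of the codimension-one wall-crossings into $A$-moves goes through the associated graphs and the Hatcher--Thurston list of elementary moves, not simply ``handle slides and birth--death pairs.'' So: correct framing, useful preliminary observation, but the converse direction remains unproved in your proposal, and the two strategies sketched are not filled in; filling in the second one along the lines of the paper requires the interior-critical-point avoidance argument and the graph-move analysis that you do not supply.
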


\begin{proof}[Sketch of Proof] It is clear that $A$-moves do not change the
handlebody. We use ideas from Morse theory to prove the converse. They are an
elaboration of ideas used by Hatcher and Thurston \cite{hatcher-thurston} (see
also \cite{hatcher}). We will use boundary components instead of tangent vectors
and will assume, without loss of generality, that the decorations $D$ consist
entirely of boundary components.

If $\gamma$ is a pants decomposition of $(S,D)$, then the boundary of the
associated handlebody $U$ equals
$$
\partial U = S \cup \bigcup_{v\in \pi_0(\partial S)} \D_v,
$$
where $\D_v$ is a disk that corresponds to the boundary component $v$ of $S$. We
regard this as a manifold with corners at the submanifold $\partial S$ of
$\partial U$.

To a pants decomposition $\gamma$ of $S$, we associate a graph $G_\gamma$. This
has one white vertex for each $c\in \gamma$ and one black vertex for each pair
of pants. Edges connect a black and a white vertex; each black vertex is joined
to the white vertex corresponding to its boundary components. All black vertices
have valence 3; white vertices have valence 1 if they are boundary components of
$S$, otherwise they have valence 2. The handlebody $U$ is a regular
neighbourhood of $G_\gamma$. The circles retract onto the white vertices.

A height function $f:G_\gamma \to [0,1]$ on such a graph is a continuous
function whose restriction to each edge has no critical points and whose local
extrema occur only at white vertices. We also require that $f$ vanish on each
1-valent white vertex. Every $G_\gamma$ has a height function.

We will say that a smooth function $F : U \to [0,1]$ is {\em Morse} if it
vanishes identically on each $\D_v$, it has no critical points in $U$, and if
its restriction $F|_S$ to $S$ is a Morse function. A Morse function $F:U \to
[0,1]$ is {\em convex} if $F^{-1}(a)$ is a disjoint union of contractible sets
for each $a\in [0,1]$. 

A height function $f$ extends to a convex Morse function $F:U\to [0,1]$ where
$F|_S$ has one critical point for each black vertex and for each 2-valent white
vertex that is a local extremum of $f$. The critical values of $F|_S$ equal the
values of $f$ on the black vertices and the values of $f$ on the 2-valent white
vertices that are local extrema. The stable or unstable manifold in $S$ of the
critical point of $F|_S$ corresponding to a 2-valent white vertex is the isotopy
class of the corresponding $c\in\gamma$. The white vertices that are not local
extrema of $f$ correspond to components of the level sets of $F$. The isotopy
class of the vertex corresponding to $c\in \gamma$ is $c$.

Suppose now that $\gamma_0$ and $\gamma_1$ are two pants decompositions of
$(S,D)$ that determine the same handlebody $U$. Choose height functions $f_j :
\gamma_j \to [0,1]$. Extend these to convex Morse functions $F_j : U \to [0,1]$
using the construction in \cite{hatcher-thurston}. It follows from
\cite{hatcher-thurston} that the result will follow if we can show that there is
a smooth function $F : U\times [0,1] \to [0,1]$ that vanishes identically on
each $\D_v \times [0,1]$, whose restriction to $S \times [0,1]\to [0,1]$ is a
generic 1-parameter family of Morse functions, and where the restriction $F_t :
U \to [0,1]$ to each $U\times\{t\}$ has no critical points and is a convex Morse
functions for all but finitely many $t\in [0,1]$.

To construct such an $F$, first extend $F_0$ and $F_1$ to Morse functions $H_j
: (M,\partial M) \to ([0,1],0)$, where
$$
M = S^3 - \cup_{v\in \pi_0(\partial S)} B^3
$$
and where $\D_v$ is a hemisphere of the boundary of the corresponding 3-ball.
Join these by a generic 1-parameter family of functions $H_t : (M,\partial M)
\to ([0,1],0)$ that have no critical points in a neighbourhood of $\partial M$
and where $H : (M,\partial M)\times [0,1]\to ([0,1],0)\times [0,1]$ that takes
$(x,t)$ to $(H_t(x),t)$ is smooth. The critical set $\Sigma \subset M\times
[0,1]$ of $H$ is 1-dimensional and has relative dimension 0 over $[0,1]$. On the
other hand, we can choose $U$ so that it is a regular neighbourhood of a graph
$\G$ in $M$. Then $U\times [0,1]$ is a regular neighbourhood of $\G\times [0,1]$
in $M\times [0,1]$. Since $\Gamma \times [0,1]$ has relative dimension 1 over
$[0,1]$ and is disjoint from $\Sigma_t:= \Sigma \cap M\times\{t\}$ when $t=0,1$,
there is a vector field on $M\times [0,1]$ that is tangent to the $t$-slices and
whose flow moves $\Gamma \times [0,1]$ in $M\times [0,1]$ to a subset $J$ that
is disjoint from $\Sigma$ and intersects each fiber in a graph $J_t$
homeomorphic to $\G$. Then we may identify a regular neighbourhood $W$ of $J$ in
$M$ with $U\times [0,1]$ where $W_t := W\cap (M\times \{t \})$ is homeomorphic
with $U$ and does not intersect $\Sigma$. We may then deform the restriction of
$H$ to $W$ so that it is a generic 1-parameter family of functions $\partial
S\times [0,1] \subset \partial W$ and has no critical points in the interior of
$W$.

The issue now is that the restriction $H_t : U \to [0,1]$ of $H_t$ to $U \to
[0,1]$ may not be convex as there may be $a,t\in [0,1]$ where $H_t^{-1}(a)$ is
not a disjoint union of contractible sets. There are parameter values
$0<t_1<t_2<\dots<t_m < 1$ where $H_t|_U$ is Morse when $t \notin
\{t_1,\dots,t_m\}$. On each interval $(t_{j-1},t_j)$ of
$[0,1]-\{t_1,\dots,t_m\}$, the Morse function $H_t|_S$ is represented by a graph
$G_j$ and a height function $h_j : G_j \to [0,1]$. As $t$ moves through $t_j$,
two vertices of $G_j$ reverse height and $G_j$ changes into $G_{j+1}$ by one of
the elementary moves described in \cite{hatcher-thurston}. The extra data of the
function $H_t : U \to [0,1]$, when $t \in (t_{j-1},t_j)$, is determined by an
equivalence relation on imbedded intervals in $G_j$ that correspond to
non-critical values of $H_t$: two ``strands'' are equivalent if the
corresponding circles are boundary components of the same component of
$H_j^{-1}(a)$ for some non-critical value $a\in [0,1]$.

\begin{figure}[!ht]
\epsfig{file=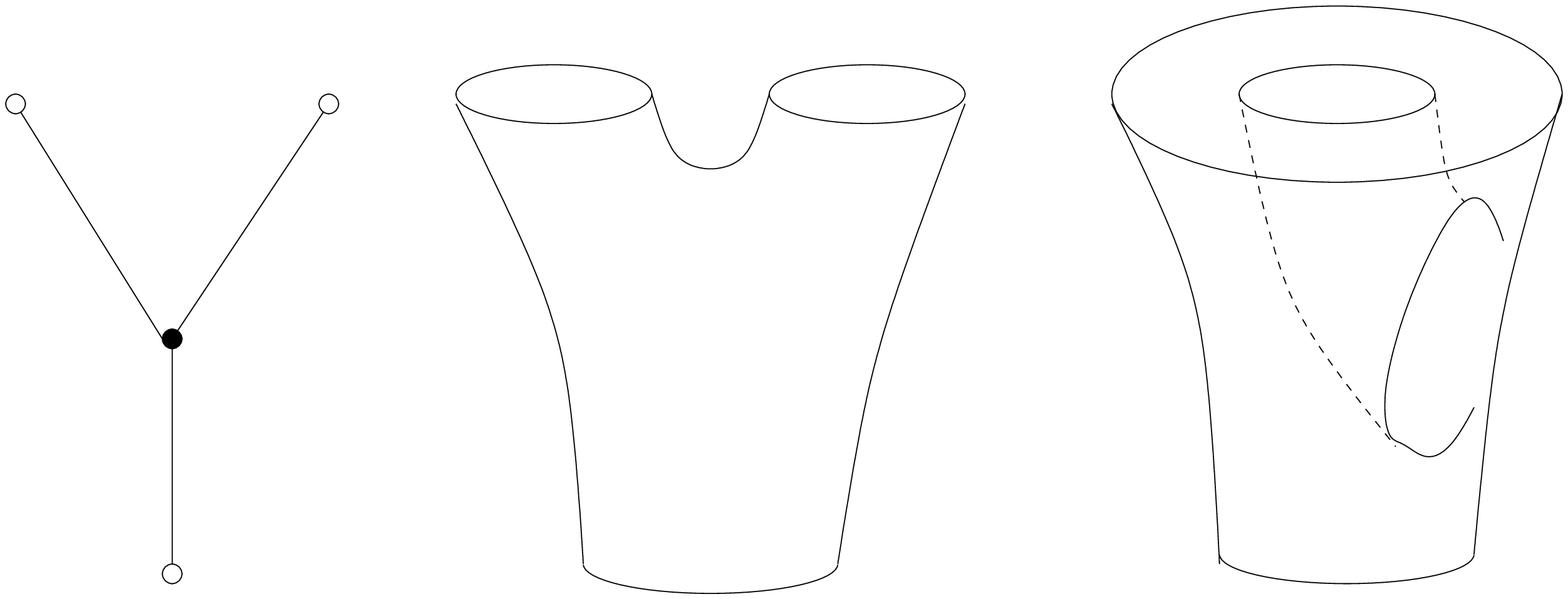, width=2.75in}
\caption{Interior versus Exterior}\label{fig:pants}
\end{figure}

Even if the Morse functions $H_t$ are not convex, we can use the sequence of
graphs $(G_0,h_0),\dots,(G_m,h_m)$ to construct a sequence of $A$-moves that
join $\gamma_0$ to $\gamma_1$. Note that at a black vertex, the level set can
change from, for example, from a disk to to two disk or to an annulus, as
illustrated in Figure~\ref{fig:pants}. However, we can apply the
Hatcher-Thurston construction to each graph to obtain a sequence of  convex
Morse functions $K_j : U \to [0,1]$  where $0\le j \le m$ where $K_0$ and $K_1$
correspond to the pants decompositions $\gamma_0$ and $\gamma_1$. Denote by
$\mu_j$ the pants decomposition of $S$ that corresponds to the restriction of
$K_j$ to $S$. Then it follows from examining the list of elementary moves in
\cite{hatcher-thurston} that the pants decompositions determined by two convex
Morse functions that differ by an elementary move, themselves differ by an
$A$-move. In particular, $\mu_{j-1}$ and $\mu_j$ differ by an $A$-move. This
shows that $\gamma_0 =\mu_0$ and $\gamma_1 = \mu_m$ are connected by a sequence
of $A$-moves.
\end{proof}

This result has the important consequence that each way of writing $S$ as the
boundary of a handlebody $U$ defines a relative weight filtration on invariants
of $(S,D)$, such as $\g_{S,D}$.

\begin{corollary}
\label{cor:M^U}
Suppose that $(S,D)$ is a stable decorated surface and that $x$ is an 
admissible base point of $S_D'$. If $U$ is a handlebody and $S = \partial U$,
then $U$ determines relative weight filtrations $M_\dot^U$ on $\g_{S,D}$,
$\O(\cG_{S,D})$, $\g_{S,D\cup\{x\}}$ and $\p(S_D',x)$. The actions
$$
\g_{S,D} \to \OutDer \p(S_D') \text{ and }
\g_{S,D\cup\{x\}} \to \Der \p_{S_D',x}
$$
are strict with respect to the weight filtrations $W_\dot$ and the relative
weight filtrations $M_\dot^U$.
\end{corollary}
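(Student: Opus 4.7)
The plan is to set $M_\dot^U := M_\dot^\gamma$ for any pants decomposition $\gamma$ of $S_{D\cup\{x\}}'$ all of whose curves bound disjoint compressing disks in $U$, and then to verify independence of the choice of $\gamma$. (Recall that $D\cup\{x\} = D$ when $x$ is a tangent vector, by Definition~\ref{def:admissible}.)

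First I would establish existence of such a $\gamma$. Given $U$ with $\partial U = S$, choose a maximal disjoint family of properly embedded compressing disks $E_1,\dots,E_g$ in $U$ whose boundary circles lie in $S - (D\cup\{x\})$; cutting $U$ along $\bigcup E_i$ produces a $3$-ball $B$. Extend $\{\partial E_1,\dots,\partial E_g\}$ to a pants decomposition $\gamma$ of $S'_{D\cup\{x\}}$ by adding further disjoint simple closed curves on $S$, each of which bounds a disk in $B$ (and therefore in $U$). Since $\gamma$ is rational and each curve bounds a disk in $U$, the lemma preceding Theorem~\ref{thm:pants} gives $U_\gamma = U$.

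Next, well-definedness. Suppose $\gamma_0$ and $\gamma_1$ are two such pants decompositions. Then $U_{\gamma_0} = U = U_{\gamma_1}$, so Theorem~\ref{thm:pants} supplies a finite sequence of $A$-moves on $(S, D\cup\{x\})$ joining $\gamma_0$ to $\gamma_1$. Each such $A$-move is also an $A$-move on $(S, D)$, since forgetting the point decoration $x$ alters neither the set of tangent vectors nor the intersection $\gamma_0\cap\gamma_1$. Applying Proposition~\ref{prop:invariance} on $(S, D\cup\{x\})$ and on $(S, D)$ yields the equality $M_\dot^{\gamma_0} = M_\dot^{\gamma_1}$ on each of $\g_{S,D\cup\{x\}}$, $\p(S_D',x)$, $\g_{S,D}$ and $\O(\cG_{S,D})$, since the underlying Proposition~\ref{prop:rel_wt} applies uniformly to these four objects. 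Consequently $M_\dot^U$ is well defined on all four objects.

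The strictness of the natural actions $d\phi : \g_{S,D} \to \OutDer\p(S_D')$ and $d\phitilde_x : \g_{S,D\cup\{x\}} \to \Der\p(S_D',x)$ with respect to $W_\dot$ and $M_\dot^U$ is then immediate from the corresponding clause of Theorem~\ref{thm:wt_mcg} applied to any pants decomposition $\gamma$ representing $U$, because $M_\dot^U = M_\dot^\gamma$ by definition. The main obstacle is really the topological existence claim in the first step: one must verify that every handlebody $U$ with $\partial U = S$ carries a pants decomposition of $S'_{D\cup\{x\}}$ consisting entirely of disk-bounding curves. Once this is in hand, the rest of the corollary is a direct assembly of Theorem~\ref{thm:pants}, Proposition~\ref{prop:invariance} and Theorem~\ref{thm:wt_mcg}.
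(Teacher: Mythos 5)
Your proof takes the same route as the paper: define $M_\dot^U := M_\dot^\gamma$ for a pants decomposition $\gamma$ whose curves bound disks in $U$, then invoke Theorem~\ref{thm:pants} together with Proposition~\ref{prop:invariance} for independence of $\gamma$ and Theorem~\ref{thm:wt_mcg} for strictness. The only additions are your explicit construction of such a $\gamma$ (via a complete disk system) and the remark that the $A$-moves descend from $(S,D\cup\{x\})$ to $(S,D)$ --- details the paper leaves implicit but which are consistent with its argument.
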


\begin{proof}
Choose a pants decomposition $\gamma$ of $(S,D\cup\{x\})$ such that each $c\in
\gamma$ bounds a disk in $U$. Define $M_\dot^U = M_\dot^\gamma$.
Theorem~\ref{thm:pants} and Proposition~\ref{prop:invariance} implies that this
is independent of the choice of $\gamma$. The strictness properties follow from
Theorem~\ref{thm:wt_mcg}.
\end{proof}

For a handlebody $U$ and $x\in U$, denote $\pi_1(U,x)^\un$ by $\F(U,x)$ and its
Lie algebra by $\f(U,x)$.

\begin{proposition}
\label{prop:Mp}
If $S$ bounds the handlebody $U$ and $x\in S$, then
$$
M_0^U \p(S,x) = \p(S,x) \text{ and }
M_{-2}^U\p(S,x) = \ker\{\p(S,x) \to \f(U,x)\}.
$$
Consequently, $\f(U,x) \cong \Gr^{M^U}_0 \p(S,x)$.
\end{proposition}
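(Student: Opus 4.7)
The plan is to choose a pants decomposition $\gamma=\{c_0,\dots,c_m\}$ of $(S,\{x\})$ whose curves bound disks in $U$, so that $M_\dot^U = M_\dot^\gamma$ by Corollary~\ref{cor:M^U}. The classes $\{[c_j]\}$ span the Lagrangian subspace $L=\ker(H_1(S)\to H_1(U))$ of $H_1(S)$, and since $L$ is Lagrangian (hence $L^\perp=L$), the Picard--Lefschetz formula gives $\ker N_\gamma = \im N_\gamma = L$. The monodromy filtration on $H_1(S)$ therefore collapses to two pieces: $\Gr^M_0 = H_1(S)/L \cong H_1(U) =: A$ and $\Gr^M_{-2} = L$, with $\Gr^M_{-1}=0$. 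In particular only even $M$-weights occur, all nonpositive; consequently $M^U_{-1}\p(S,x) = M^U_{-2}\p(S,x)$, so (c) will reduce to (b).

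For (a) and for an abstract description of the quotient in (b), I invoke Labute's theorem (equivalently, formality of compact orientable surfaces): $\Gr^W_\dot \p(S,x) = \L(H_1(S))/\langle\omega\rangle$, where $\omega \in \Lambda^2 H_1(S)$ is the symplectic form. Each $\Gr^W_{-m}\p(S,x)$ is a subquotient of $H_1(S)^{\otimes m}$, and since $M$-weights add under tensor products and are nonpositive on $H_1(S)$, the same holds for $\Gr^W_{-m}\p(S,x)$. Combined with the splitting of Theorem~\ref{thm:wt_mcg}, this proves (a). Writing $\omega=\sum a_i\wedge b_i$ for a symplectic basis with $a_i\in L$ and $b_i$ in a complementary Lagrangian, $\omega$ has $M$-weight $-2$, so $\langle\omega\rangle \subseteq M_{-2}\L(H_1(S))$. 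Since $H_1(S)=A\oplus L$ is a direct sum of its $M$-weight $0$ and $-2$ parts, $\Gr^M_0 \L(H_1(S)) = \L(A)$, and hence $\Gr^M_0 \Gr^W_\dot\p(S,x) = \L(A)$, the free Lie algebra on $A=H_1(U)$. The simultaneous splitting of $W_\dot$ and $M_\dot^U$ then furnishes a Lie algebra isomorphism
$$
\p(S,x)/M^U_{-2}\p(S,x) \;\cong\; \L(A)\comp \;=\; \f(U,x).
$$

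To finish (b), I identify this quotient with the kernel of the natural map $\phi\colon\p(S,x)\to\f(U,x)$ induced by $S\hookrightarrow U$. Both sides carry the lower central series filtration and $\phi$ respects it, so $\phi$ is strict with respect to $W_\dot$. On graded pieces the map $\Gr^W\phi\colon\L(H_1(S))/\langle\omega\rangle \to \L(H_1(U))$ is the Lie algebra surjection induced by the projection $H_1(S)\twoheadrightarrow H_1(U)$, and it annihilates the ideal generated by $L$, which is precisely $\Gr^W(M^U_{-2}\p(S,x))$. Using a splitting of $\p(S,x)$ from Theorem~\ref{thm:wt_mcg} adapted to the canonical $W$-splitting of the free target $\f(U,x)$ under $\phi$, this graded vanishing lifts to $\phi(M^U_{-2}\p(S,x))=0$, giving $M^U_{-2}\p(S,x)\subseteq\ker\phi$. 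The induced surjection
$$
\bar\phi\colon \p(S,x)/M^U_{-2}\p(S,x) \twoheadrightarrow \p(S,x)/\ker\phi \;=\; \f(U,x)
$$
is a homomorphism between pronilpotent Lie algebras both isomorphic to the free $\f(U,x)$ and induces the identity on $H_1=A$. Since both sides are free, $H^2$ vanishes on each, and Corollary~\ref{cor:isom} gives that $\bar\phi$ is an isomorphism; hence $\ker\phi = M^U_{-2}\p(S,x)$. The main technical hurdle is lifting the graded vanishing to the honest vanishing $\phi(M^U_{-2}\p(S,x))=0$, where the compatibility of the Theorem~\ref{thm:wt_mcg} splitting with $\phi$ is what makes the graded analysis bite on $\p(S,x)$ itself.
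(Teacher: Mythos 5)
Your approach is genuinely different from the paper's and has a gap at the step you yourself flag as ``the main technical hurdle.'' You compute $\Gr^{M}_0\Gr^W_\dot\p(S,x)\cong\L(A)$ directly from the surface-group presentation and from the decomposition of $H_1(S)$ into $M$-weights $0$ and $-2$, and you then want to lift the graded vanishing of $\Gr^W_\dot\phi$ on $\Gr^W_\dot M^U_{-2}\p(S,x)$ to the honest statement $\phi(M^U_{-2}\p(S,x))=0$. To do this you appeal to ``a splitting of $\p(S,x)$ from Theorem~\ref{thm:wt_mcg} adapted to the canonical $W$-splitting of the free target $\f(U,x)$ under $\phi$.'' But Theorem~\ref{thm:wt_mcg} says nothing about the map $\p(S,x)\to\f(U,x)$: the compatible splittings it furnishes concern the maps $d\phitilde_x:\g_{S,\Dtilde}\to\Der\p(S_D',x)$ and $d\phi:\g_{S,D}\to\OutDer\p(S_D')$, and the Lie algebra $\f(U,x)$ does not even appear there. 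There is no a priori reason a bifiltered splitting of $\p(S,x)$ can be chosen to be respected by an arbitrary Lie algebra surjection; the existence of such an adapted splitting is essentially equivalent to the strictness you are trying to establish, so the argument is circular at this point.

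This is precisely the issue the paper's proof addresses head-on, by an argument of a different flavor: it realizes the relative weight filtration as coming from a degeneration of $(S,x)$ to a stable rational curve $C_0$ with $\pi_1(C_0,x)\cong\pi_1(U,x)$, so that $\p(S,x)\to\f(U,x)$ is the map induced by the inclusion of a nearby smooth fiber into the total space. This exhibits $\phi$ as a morphism of mixed Hodge structures (equivalently, Galois-equivariant), and exhibits $\f(U,x)$ as pure of weight zero (trivial Galois module). Strictness of $\phi$ with respect to $M^U_\dot$ is then automatic from general Hodge/Galois theory, and the proposition follows from strictness together with the $H_1$ computation you also made ($M^U_{-2}H_1(\p(S,x))=\ker\{H_1(S)\to H_1(U)\}$). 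Your graded computation of $\Gr^{M^U}_0\Gr^W_\dot\p(S,x)$ is correct and gives a nice explicit picture, and the endgame using Corollary~\ref{cor:isom} to deduce $\ker\phi=M^U_{-2}\p(S,x)$ once the containment $M^U_{-2}\p(S,x)\subseteq\ker\phi$ is known is fine; what is missing is any actual justification that $\phi$ respects $M^U_\dot$, and the paper's degeneration argument is what supplies it.
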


\begin{proof}
Choose a pants decomposition $\gamma$ of $(S,x)$ as in the proof of
Corollary~\ref{cor:M^U} such that $M_\dot^\gamma = M^U_\dot$. The relative
weight filtration on $\p(S,x)$ arises from a degeneration of $(S,x)$ to the
stable rational curve $C_0$ whose underlying surface is $(S/\gamma,x)$. The
degeneration can be chosen to be defined over $\Q$ as each of its components is
a 3-pointed $\P^1$.\footnote{We shall denote the degeneration by $X\to \D$,
where $\D$ is an analytic disk in the Hodge case and a formal disk
$\Spec\Q[[t]]$ in the Galois case.}  The inclusion of a nearby fiber $C_t$ (the
fiber over a first order smoothing of $C_0$) into the total space of the local
deformation $X \to \D$ induces the homomorphism $\pi_1(S,x)\cong \pi_1(C_t,x)
\to \pi_1(C_0,x)\cong \pi_1(U,x)$. This implies that the homomorphism $\p(S,x)
\to \f(U,x)$ is equivariant and a morphism of mixed Hodge structures. Since the
Galois action on $H_1(C_0)$ is trivial (resp., the MHS on $\f(U,x)$ is of pure
of weight 0 and type $(0,0)$), it follows that $\f(U,x)$ is a trivial Galois
module (resp., is also pure of weight 0 and type $(0,0)$). Since $\p(S,x) \to
\f(U,x)$ is a morphism (Galois, Hodge), it is strict with respect to the weight
filtration $M_\dot^U$. The assertions follows from strictness and the fact that
$$
M_{-2}^U H_1(\p(S,x)) = \ker\{H_1(\p(S,x)) \to H_1(\f(U,x))\}.
$$
\end{proof}

\section{Handlebody Groups}
\label{sec:handle_gps}

Suppose that $(S,D)$ is a stable decorated surface and that $S$ is the boundary
of a handlebody $U$. Define the {\em handlebody group} of $(U,D)$ by
$$
\Lambda_{U,D} = \pi_0\Diff^+(U,D),
$$
where each diffeomorphism acts trivially on $D$.\footnote{For a subgroup $G$ of
$\Aut D$, one can also define $\Lambda_{U,D}^G$ as we did for mapping class
groups. However, we shall not need such groups.} Griffiths \cite{griffiths},
Suzuki \cite{suzuki}, Luft \cite{luft}, and Pitsch \cite{pitsch} have proved
fundamental results about handlebody groups and found generating sets of
$\Lambda_U$. For example, Griffiths \cite{griffiths} proved that homomorphisms
$$
\Lambda_{U,x} \to \Aut \pi_1(U,x) \text{ and } \Lambda_U \to \Out \pi_1(U)
$$
are surjective. Luft proved that the kernels of each of these is generated by
twists on imbedded disks $(\D,\partial \D) \hookrightarrow (U,S)$.\footnote{The
twist on an imbedded disk is the isotopy class of a smoothing of the
homeomorphism $(re^{i\theta},t) \mapsto (re^{i\big(\theta+2\pi t)},t\big)$ of a
tubular neighbourhood $\D\times [0,1]$ of $\D$ in $U$. Its restriction to $S$ is
the Dehn twist about the loop $\partial \D$ in $S$.}

Restriction to the boundary defines a homomorphism
$$
r_{U,D} : \Lambda_{U,D} \to \G_{S,D}
$$
It is straightforward to show that if $\Dtilde$ is a refinement of $D$, then
$r_{U,\Dtilde}$ and $r_{U,D}$ induce an isomorphism
\begin{equation}
\label{eqn:kernel}
\xymatrix{
r: \ker\{\Lambda_{U,\Dtilde}\to \Lambda_{U,D}\} \ar[r]^{\simeq} &
\ker\{\G_{U,\Dtilde}\to \G_{U,D}\}.
}
\end{equation}
For this reason, we will mainly restrict our attention to the cases where
$\# D\le 1$.

The following appears to be well-known to the experts. I am grateful to Alan
Hatcher for communicating a proof.

\begin{proposition}
\label{prop:folk}
If $(S,D)$ is a stable surface, then the homomorphism $r_{U,D}$ is injective.
\end{proposition}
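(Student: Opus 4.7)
The plan is a two-step reduction: first trivialize $f$ on $\partial U$ via the isotopy extension theorem, then cut the handlebody along a system of meridian disks to reduce to the triviality of $\pi_0\Diff^+(B^3,\partial B^3)$.

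For the first step, let $[f]\in\ker r_{U,D}$ be represented by some $f\in\Diff^+(U,D)$, so that $f|_S$ is isotopic through $\Diff^+(S,D)$ to $\id_S$. Since $D\subset S$ and $\partial U=S$ is collared in $U$, the isotopy extension theorem lifts this boundary isotopy to an ambient isotopy $(G_t)_{0\le t\le 1}$ of $U$, supported in a collar of $S$ and fixing $D$ throughout, with $G_0=\id_U$ and $G_1|_S=f|_S$. Replacing $f$ by $G_1^{-1}\circ f$, which lies in the same class in $\Lambda_{U,D}$, we may assume $f|_S=\id_S$; the tangent vectors in $V\subset TS$ are then automatically fixed. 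Hence it suffices to prove that $\pi_0\Diff^+(U,\partial U)$ is trivial.

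For the second step, choose a complete system $\Delta=\{\D_1,\ldots,\D_g\}$ of disjoint, properly embedded, essential meridian disks cutting $U$ into a $3$-ball $B$. After a perturbation of $f$ rel $\partial U$, arrange $f(\Delta)\pitchfork\Delta$. The irreducibility of $U$ together with the incompressibility of each $\D_i$ permit the standard innermost-disk / outermost-arc arguments to remove the intersection circles and arcs one at a time by ambient isotopies of $f$ rel $\partial U$. When no intersections remain, each $f(\D_i)$ is a properly embedded disk in the ball obtained by cutting $U$ along $\Delta$ and has boundary $\partial\D_i$, hence is isotopic to $\D_i$ rel its boundary, so a further ambient isotopy rel $\partial U$ achieves $f(\D_i)=\D_i$ setwise for all $i$. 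Smale's theorem $\pi_0\Diff(D^2,\partial D^2)=1$ combined with isotopy extension then makes $f$ fix $\Delta$ pointwise. Cutting $U$ along $\Delta$ now produces a diffeomorphism $\bar f$ of $B$ fixing $\partial B$ pointwise, which by Cerf's theorem (equivalently, the smoothed Alexander trick in dimension three) is isotopic to $\id_B$ rel $\partial B$. Reassembling yields the required isotopy $f\sim\id_U$ rel $\partial U$, so $[f]=1$ in $\Lambda_{U,D}$.

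The main obstacle is the innermost-disk reduction of the second step: one must ensure that each intersection-removing ambient isotopy is supported away from $\partial U$ and that the individual reductions are orchestrated so that intersections eliminated on one disk are not recreated on another. This is standard $3$-manifold technique, and the proposition is folklore precisely because the individual ingredients (isotopy extension, the innermost disk argument, Smale's theorem on $\Diff(D^2,\partial D^2)$, and Cerf's triviality of $\pi_0\Diff^+(B^3,\partial B^3)$) are routine while their careful joint execution in the presence of decorations $D\subset\partial U$ requires bookkeeping; this is presumably the content of Hatcher's communication.
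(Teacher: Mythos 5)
Your argument and the paper's share the same skeleton: first apply the isotopy extension theorem to reduce to $f|_S=\id_S$ (which disposes of the decorations $D$), then use disk-exchange moves powered by the irreducibility of a handlebody to trivialize $f$, with Cerf's $\pi_0\Diff(D^3,\partial D^3)=1$ supplying the final step. The organizations of the second stage differ, though. You cut along a complete meridian system $\{\D_1,\dots,\D_g\}$ simultaneously, run innermost-circle and outermost-arc reductions against all $g$ disks, and then reduce to a single ball at once. The paper instead inducts on genus: it picks one essential disk $\D$ together with a disjoint \emph{parallel copy} $\D'$, makes $\phi(\D)$ transverse to $\D'$, uses irreducibility to slide $\phi(\D)$ onto $\D'$ (and hence onto $\D$) circle by circle, cuts along $\D$ to drop the genus by one, and repeats, landing on Cerf in genus zero. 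The parallel disk $\D'$ is the one genuinely technical device your proposal omits, and it is what keeps the paper's intersection reductions routine: because $\phi(\D)$ and $\D'$ have disjoint boundary circles, ordinary transversality applies and only circles of intersection arise, so no outermost-arc step is needed. In your version, since you have already forced $f|_S=\id_S$, each $f(\D_i)$ shares its boundary with $\D_i$, so ``perturb $f$ rel $\partial U$ so that $f(\Delta)\pitchfork\Delta$'' cannot be taken literally; you must make them agree in a collar of $\partial\D_i$, be transverse only in the interior, and then handle arcs of intersection by boundary compressions. You flag this bookkeeping yourself, and it can be done, but adopting the paper's parallel-copy trick removes it entirely. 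Both routes are correct; yours is more direct (one cut to a ball), the paper's trades a short induction for entirely arc-free intersection reductions.
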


\begin{proof}[Sketch of Proof]
By the isomorphism (\ref{eqn:kernel}), it suffices to prove the result when $D$
is empty and $g\ge 2$ and when $(g,n)$ is $(0,3)$ and $(1,1)$.

In genus 0 (with any number of points), the result follows directly from a
result of Cerf \cite{cerf}. The general case is proved by induction on $g$.
Suppose $g\ge 1$. Suppose that $\phi \in \Diff^+ U$ is a diffeomorphism whose
restriction to the boundary is isotopic to the identity. Then $\phi$ is isotopic
to a diffeomorphism whose restriction to $S$ is the identity. We will assume
that this is the case. Now choose an imbedded disk $(\D,\partial \D) \subset
(U,S)$. Let $(\D',\partial \D')$ be a disk imbedded in $(U,S)$ parallel to and
disjoint from $\D$. After altering $\phi$ by an isotopy fixing $S$ if necessary,
we may assume that the restriction of $\phi$ to $\D$ is transverse to $\D'$.
Since $U$ is an irreducible 3-manifold \cite{jaco}, $\D' \cup \phi(\D) \cup A$,
where $A \subset S$ is the annulus between $\partial\D$ and $\partial\D'$,
bounds a 3-ball. We can then deform $\phi$ by an isotopy fixing $S$ so that the
number of connected components of the complement in $U$ of $\D'\cup \phi(\D)
\cup A$ is reduced by one. We may therefore assume that $\D'\cup \phi(\D) \cup
A$ bounds a ball. By further modifying $\phi$ by an isotopy fixing $S$, we may
assume that $\phi$ fixes $\D$ pointwise. Now cut $U$ apart along $\D$ to obtain
a diffeomorphism $\phi'$ of a handlebody $U'$ of genus one less whose
restriction to $\partial U'$ is the identity. The result now follows by
induction.
\end{proof}

The handlebody group $\Lambda_U$ is bounded by the 0th term $M_0^U$ of the
relative weight filtration.

\begin{lemma}
\label{lem:M_0}
If $(S,D)$ is a stable decorated surface and $S$ bounds the handlebody $U$, then
the image of $\Lambda_{U,D} \to \cG_{S,D}$ lies in $M^U_0 \cG_{S,D}$.
\end{lemma}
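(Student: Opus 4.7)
The plan has two parts: (a) show that $M_0^U\cG_{S,D}$ coincides with the stabilizer in $\cG_{S,D}$ of the filtration $M^U_\dot\g_{S,D}$ under the adjoint action; and (b) show that $\Ad(\rhohat(\phi))$ preserves $M^U_\dot$ for every $\phi\in\Lambda_{U,D}$. Combining (a) and (b) will give the result.

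For (b): choose a pants decomposition $\gamma$ of $S_D'$ whose curves all bound disks in $U$, so that $M^\gamma_\dot = M^U_\dot$ by Corollary~\ref{cor:M^U}. Given $\phi\in\Lambda_{U,D}$, the extension of $\phi$ to a diffeomorphism of $U$ sends meridian disks to meridian disks, so $\phi(\gamma)$ is again a pants decomposition of $S_D'$ whose curves bound disks in $U$. By Theorem~\ref{thm:pants}, $\gamma$ and $\phi(\gamma)$ are related by a sequence of $A$-moves, and Proposition~\ref{prop:invariance} then yields $M^{\phi(\gamma)}_\dot = M^\gamma_\dot = M^U_\dot$. The transformation rule $M^{\phi(\gamma)}_\dot = \Ad(\rhohat(\phi))M^\gamma_\dot$ (the unnumbered proposition just before Proposition~\ref{prop:parabolic}) now gives $\Ad(\rhohat(\phi))M^U_\dot = M^U_\dot$.

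For (a): the containment $M_0^U\g_{S,D}\subseteq\{X : [X,M^U_\dot\g_{S,D}]\subseteq M^U_\dot\g_{S,D}\}$ is immediate from $[M^U_r,M^U_s]\subseteq M^U_{r+s}$. For the reverse containment, invoke the splitting $\g_{S,D}\cong\prod_{k,m}\Gr^{M^U}_k\Gr^W_m\g_{S,D}$ of completed Lie algebras furnished by Theorem~\ref{thm:wt_mcg}: any nonzero component of $X\in\g_{S,D}$ in $\Gr^{M^U}_j$ with $j>0$ acts on the complementary summands by a strict upward $j$-shift in $M^U$-degree, and so cannot preserve $M^U_\dot$ unless that component is central; but the center of $\g_{S,D}$ (the line of Theorem~\ref{thm:central}, which lies in $W_{-2}\g_{S,D}$) is contained in $M_0^U\g_{S,D}$, so no positive-degree contribution survives. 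Hence the stabilizer subgroup $N\subset\cG_{S,D}$ of $M^U_\dot$ has Lie algebra $M_0^U\g_{S,D}$. Both $N$ and $M_0^U\cG_{S,D}$ are closed subgroups of the connected proalgebraic group $\cG_{S,D}$ with this same Lie algebra, and both are connected — for $N$ the reductive part is the Siegel parabolic stabilizing the meridian Lagrangian $M^U_{-2}H_1(S)$ (which is connected), and the prounipotent part is automatically connected. Therefore $N=M_0^U\cG_{S,D}$, and combining with (b) gives $\rhohat(\phi)\in M_0^U\cG_{S,D}$.

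The main obstacle is step (a): identifying the algebraically-defined subgroup $M_0^U\cG_{S,D}$ (cut out by the Hopf ideal $M^U_{-1}\O(\cG_{S,D})$) with the stabilizer of the filtration. The ingredients are the splitting from Theorem~\ref{thm:wt_mcg} (which is what lets one ``read off'' positive $M^U$-degree components) and the location of the center of $\g_{S,D}$ inside the filtration; once both are in hand the matching of Lie algebras and connectedness of both subgroups is formal.
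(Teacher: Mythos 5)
Your approach is genuinely different from the paper's, and for $g\ge 3$ it is essentially correct, but there are two points worth flagging.

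The paper does not argue via a stabilizer characterization of $M_0^U\cG_{S,D}$. Instead it reduces to a generating-set computation: by Luft's theorem, $\ker\{\Lambda_{U,x}\to\Aut\pi_1(U,x)\}$ is generated by twists on meridian disks, each of which lies in $M^U_{-2}\cG_{S,D}$ by Theorem~\ref{thm:wt_mcg}; the paper then exhibits explicit elements $\phi_h,\psi\in\Lambda_{U,x}$ (constructed via the branched-cover surfaces of Section~\ref{sec:glueing}), shows they lie in $M_0^U\cG_{S,D}$ via the Glueing Lemma (Propositions~\ref{prop:phi} and \ref{prop:psi}), and invokes Nielsen's theorem to see that their images generate $\Aut\pi_1(U,x)$. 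Your step (b), by contrast, is a clean conceptual argument: $\phi(\gamma)$ bounds in $U$, Theorem~\ref{thm:pants} gives an $A$-move chain, and the transformation rule $M_\dot^{\phi(\gamma)}=\Ad(\phi)M_\dot^\gamma$ yields $\Ad(\rhohat(\phi))M^U_\dot=M^U_\dot$. This avoids both Nielsen's theorem and the explicit $\phi_h,\psi$ constructions, and is a genuine improvement in transparency. What it buys: it shows \emph{all} of $\Lambda_{U,D}$ (not just a generating set) visibly preserves the filtration, which matches the yoga that $M_0^U$ ``is'' the parabolic attached to $U$.

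Two gaps in step (a). First, the centrality argument misidentifies the relevant object: the $\Q$ in Theorem~\ref{thm:central} is $\ker\{T^\un_{S,D}\to\cG_{S,D}\}$, which does not sit inside $\g_{S,D}$ at all, so it cannot be invoked as ``the center of $\g_{S,D}$.'' The reverse containment you want is exactly Proposition~\ref{prop:parabolic} (normalizer of $M_0^U\g_{S,D}$ equals $M_0^U\g_{S,D}$); the stabilizer of the whole filtration is sandwiched between $M_0^U\g_{S,D}$ and that normalizer, so citing the proposition directly closes the argument and bypasses the centrality discussion entirely. Second, and more seriously, Proposition~\ref{prop:parabolic} is stated for $g(S)\ge 3$ (and the paper remarks it holds in genus $2$ but \emph{fails} in genus $1$), while Lemma~\ref{lem:M_0} has no genus restriction; the paper's explicit-generator proof handles genus $1$ (where $\Aut\pi_1(U,x)=\Aut\Z$ needs only $\phi_2$), but your stabilizer argument does not. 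Finally, the identification $N=M_0^U\cG_{S,D}$ of connected closed subgroups with equal Lie algebras needs the inclusion $M_0^U\cG_{S,D}\subseteq N$ (which follows from the compatibility of $M_\dot^U$ with the Hopf structure of $\O(\cG_{S,D})$ stated in Theorem~\ref{thm:wt_mcg}) together with the observation that the image of $N$ in $\Sp(H)$ is contained in the Siegel parabolic of the Lagrangian $M^U_{-2}H$; as written these are asserted rather than argued.
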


\begin{proof}
The proof would be straightforward if $\u_{S,x} \to \Der \p(S,x)$ were
injective. Since this is not known, we need a direct proof. As noted above, Luft
\cite{luft} proved that $\ker\{\Lambda_{U,x} \to \Aut \pi_1(U,x)\}$ is generated
by twists on imbedded disks. If $c$ is a simple closed curve in $S$ that bounds
an imbedded disk in $U$, then there is a pants decomposition $\gamma$ of $(S,D)$
that contains $c$ where each $c'\in \gamma$ bounds in $U$. It follows from
Theorem~\ref{thm:wt_mcg} that that the Dehn twist on $c$ lies in
$M^U_{-2}\cG_{S,D} := M^\gamma_{-2}\cG_{S,D}$.

Thus, to prove the result, it suffices to show that there are elements of
$M_0\Lambda_{U,x} := \Lambda_{U,x}\cap M^U_0\cG_{S,x}$ whose images in $\Aut
\pi_1(U,x)$ generate $\Aut\pi_1(U,x)$. To do this, we use elements of
$\Lambda_{U,x}$ closely related to those used by Luft in \cite{luft}. That these
generators lie in $M_0^U\cG_{S,x}$ follows from Propositions~\ref{prop:phi}
and \ref{prop:psi}.

Represent $\pi_1(U,x)$ as a free group $\langle a_1,\dots,a_g \rangle$, where
each $a_j$ is a simple closed curve on the boundary of $S$. Note that the
automorphisms conjugate to $\phi_2 \in M_0\Lambda_{U,x}$ constructed in
Section~\ref{sec:glueing} can be used to invert any generators $a_j$ of
$\pi_1(U,x)$ while leaving the remaining generators fixed. The automorphism
$\psi \in M_0\Lambda_{U,x}$ defined there can be used to define an automorphism
that fixes all but two of the generators $a_j$ and $a_k$ and acts on them via
$$
a_j \mapsto a_k^{-1} \text{ and } a_k \mapsto a_j^{-1}.
$$
Composing this with the first kind of automorphism, we see that there are
elements of $M_0\Lambda_{U,x}$ that transpose any two of the generators $a_j$.
We can therefore realize all permutations of the generators $a_j$ by elements of
$M_0\Lambda_{U,x}$. Finally, the elements $\phi_3 \in M_0\Lambda_{U,x}$ realize
the automorphism that fixes $a_j$ when $j>2$ and satisfies
$$
a_1 \mapsto a_2 \text{ and } a_2 \mapsto (a_1 a_2)^{-1}.
$$
By a Theorem of Nielsen \cite{nielsen} (cf.\ \cite{luft}) these automorphisms of
$\pi_1(U,x)$ generate $\Aut \pi_1(U,x)$. This completes the proof.
\end{proof}

When $\#D\ge 1$, the homomorphism $\G_{S,D}\to \cG_{S,D}$ is injective.

\begin{theorem}
\label{thm:M_Lambda}
If $(S,D)$ is a stable decorated surface that bounds the handlebody $U$, where
$\# D = 1$, then
\begin{enumerate}

\item  $\Lambda_{U,D} = \G_{S,D} \cap M_0^U \cG_{S,D}$;

\item
$\ker\{\Lambda_{U,D} \to \Aut \pi_1(U,x)\} =
\Lambda_{U,D} \cap M^U_{-2}\cG_{S,D}$;

\item $\Lambda_{U,D} \cap M^U_{-2}\U_{S,D}$ is generated by opposite twists on
disjoint bounding pairs of imbedded disks $(\D_j,\partial\D_j) \hookrightarrow
(U,S)$ ($j=1,2$) whose images avoid $D$.
\end{enumerate}
\end{theorem}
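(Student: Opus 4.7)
The plan is to deduce all three parts from two principal ingredients: the strictness of the infinitesimal action $\g_{S,D}\to\Der\p(S_D',x)$ with respect to $M_\dot^U$ and $W_\dot$ (Corollary~\ref{cor:M^U}), together with the identification $M_{-2}^U\p(S_D',x)=\ker\{\p(S_D',x)\to\f(U,x)\}$ from Proposition~\ref{prop:Mp}; and the classical topological results of Griffiths, Luft and Pitsch on handlebody subgroups. Throughout, choose the base point $x$ of $S_D'$ to be, or to underlie, the unique decoration in $D$, so that the action of $\G_{S,D}$ on $\pi_1(S_D',x)$ is an honest action rather than merely an outer one.

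First I would prove (ii). For $\supseteq$, suppose $\phi\in\Lambda_{U,D}\cap M_{-2}^U\cG_{S,D}$. Strictness sends $M_{-2}^U\g_{S,D}$ into the set of derivations of $\p(S_D',x)$ that carry all of $\p(S_D',x)$ into $M_{-2}^U\p(S_D',x)$, hence acting as zero on the quotient $\Gr_0^{M^U}\p(S_D',x)\cong\f(U,x)$. Since $\pi_1(U,x)$ is a finitely generated free group and so embeds in $\F(U,x)=\exp\f(U,x)$, it follows that $\phi$ acts trivially on $\pi_1(U,x)$. For $\subseteq$, Luft's theorem says the kernel is generated by twists on imbedded disks $(\D,\partial\D)\hookrightarrow(U,S)$. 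For each such disk the restriction to $S$ is the Dehn twist $\tau_c$ about $c=\partial\D$; since $c$ bounds a disk in $U$, the curve system $\{c\}$ extends to a pants decomposition $\gamma$ of $(S,D)$ all of whose curves bound disks in $U$. Then $M_\dot^\gamma=M_\dot^U$ by definition, and Theorem~\ref{thm:wt_mcg} gives $N_c\in M_{-2}^\gamma\g_{S,D}$, so $\tau_c=\exp N_c\in M_{-2}^U\cG_{S,D}$.

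Next I would prove (i). The inclusion $\subseteq$ is Lemma~\ref{lem:M_0}. For $\supseteq$, let $\phi\in\G_{S,D}\cap M_0^U\cG_{S,D}$. Strictness shows that $\phi$ preserves $M_{-2}^U\p(S_D',x)$, which by Proposition~\ref{prop:Mp} equals $\ker\{\p(S_D',x)\to\f(U,x)\}$. Since $\pi_1(S_D',x)$ is residually torsion free nilpotent and hence injects into $\p(S_D',x)$, and similarly $\pi_1(U,x)$ injects into $\f(U,x)$, intersecting with $\pi_1(S_D',x)$ gives that $\phi_*$ preserves $K:=\ker\{\pi_1(S_D',x)\to\pi_1(U,x)\}$. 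By the classical theorem of Griffiths, Luft and Pitsch \cite{griffiths,luft,pitsch}, any element of $\G_{S,D}$ whose action on $\pi_1(S_D',x)$ preserves the meridian kernel $K$ extends to a diffeomorphism of $U$; hence $\phi\in\Lambda_{U,D}$.

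Finally, (iii) follows from (ii) by intersecting with $\U_{S,D}$. Since $\G_{S,D}\cap\U_{S,D}=T_{S,D}$,
$$
\Lambda_{U,D}\cap M_{-2}^U\U_{S,D}=T_{S,D}\cap\ker\{\Lambda_{U,D}\to\Aut\pi_1(U,x)\},
$$
and Pitsch's theorem \cite{pitsch} identifies this Torelli handlebody kernel precisely as the subgroup generated by opposite twists on disjoint bounding pairs of imbedded disks in $(U,S)$ whose images avoid $D$. The chief obstacle in the argument is the bridging step in (i)$\supseteq$: translating the pro-unipotent condition $\phi\in M_0^U\cG_{S,D}$ into the purely topological statement that $\phi_*$ preserves the meridian normal subgroup of $\pi_1(S_D',x)$. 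This is exactly where Proposition~\ref{prop:Mp} and the injectivity of $\pi_1\hookrightarrow\pi_1^\un$ are essential; once it is done, the Griffiths--Luft--Pitsch criterion delivers the conclusion.
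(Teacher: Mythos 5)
Your proof is correct and uses essentially the same ingredients and route as the paper: Proposition~\ref{prop:Mp}, strictness of the infinitesimal action (Corollary~\ref{cor:M^U} and equation~(\ref{eqn:incln_3})), the injectivity of $\pi_1\hookrightarrow\pi_1^\un$, Lemma~\ref{lem:M_0}, and the Griffiths, Luft and Pitsch results. The one bookkeeping point to tidy up is that Proposition~\ref{prop:Mp} is stated for $\p(S,x)$, not $\p(S_D',x)$ (which for $D=\{x\}$ is not even defined since $x\notin S_D'$); the paper accordingly argues with $\pi_1(S,x)$ in the pointed case first, then handles $D=\{v\}$ by noting that $\g_{S,v}\to\g_{S,x}$ is strict with respect to $M^U_\dot$ with central kernel generated by the boundary Dehn twist, which lies in $M^U_{-2}$.
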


The second assertion is a consequence of a result of Griffith \cite{griffiths}
and the third a restatement of a result of Pitsch \cite[Prop.~6]{pitsch}.

\begin{proof}
We prove the result when $x$ is a point of $S$. The case where $D$ is a non-zero
tangent vector $v \in T_x S$ follows as $\g_{S,v} \to \g_{S,x}$ is strict with
respect to $M_\dot^U$ and the kernel is central and generated by a Dehn twist on
a curve that bounds a disk in $U$, and therefore lies in $M^U_{-2}$.

Proposition~\ref{prop:Mp} combined with the fact that the homomorphisms
$\pi_1(S,x) \to \cP(S,x)$ and $\pi_1(U,x) \to \F(U,x)$ are injective implies
that the commutative diagram
$$
\xymatrix{
1 \ar[r] & \pi_1(S,x)\cap M_{-2}^U\cP(S,x) \ar[r]\ar[d] & \pi_1(S,x)
\ar[r]\ar[d] & \pi_1(U,x) \ar[r]\ar[d] & 1 \cr
1 \ar[r] & M_{-2}^U\cP(S,x) \ar[r] & \cP(S,x) \ar[r] & \F(U,x) \ar[r] & 1
}
$$
has exact rows. Since $\G_{S,x}$ is a subgroup of $\Aut\pi_1(S,x)$, it follows
that $\G_{S,x}$ is a subgroup of $\Aut\cP(S,x)$. Consequently $\G_{S,x}\cap
M^U_0\Aut\cP(S,x)$ consists of those automorphisms of $\pi_1(S,x)$ that preserve
$\ker\{\pi_1(S,x) \to \pi_1(U,x)\}$. By a result of Griffiths \cite{griffiths}
this is $\Lambda_{U,x}$, so that
\begin{equation}
\label{eqn:incln_0}
\G_{S,x} \cap M^U_0\Aut\cP(S,x) = \Lambda_{U,x}.
\end{equation}
Since $\cP(S,x) = M^U_0\cP(S,x)$, and since $\pi_1(U,x) \to \F(U,x)$ is
injective, the commutativity of the diagram above implies that
\begin{equation}
\label{eqn:incln_2}
\G_{S,x} \cap M^U_{-2}\Aut\cP(S,x) \subseteq
\ker\{\Lambda_{U,x} \to \Aut\pi_1(U,x)\}.
\end{equation}
Since the homomorphism $\g_{S,x} \to \Der\p(S,x)$ preserves the filtration
$M_\dot^U$, it follows that for all $k$ we have
\begin{equation}
\label{eqn:incln_3}
\G_{S,x} \cap M_k^U\cG_{S,x} \subseteq  \G_{S,x} \cap M_k^U\Aut\cP(S,x).
\end{equation}

Lemma~\ref{lem:M_0} implies that $\Lambda_{U,x}\subseteq \G_{S,x}\cap
M^U_0\cG_{S,x}$. The first assertion follows by combining this with the
inclusions (\ref{eqn:incln_0}) and (\ref{eqn:incln_3}) with $k=0$.

By a result of Luft \cite{luft},
the kernel of $\Lambda_{U,x} \to \Aut\pi_1(U,x)$ is generated by Dehn twists on
simple closed curves in $S$ that bound a disk imbedded in $U$. But these lie in
$M_{-2}^U\cG_{S,x}$ by Theorem~\ref{thm:wt_mcg}. Therefore
$$
\ker\{\Lambda_{U,x} \to \Aut\pi_1(U,x)\}
\subseteq \G_{S,x}\cap M^U_{-2}\cG_{S,x}.
$$
The second assertion follows by combining this with the inclusions
(\ref{eqn:incln_2}) and (\ref{eqn:incln_3}) with $k=-2$.

The final assertion follows from this
and a result of Pitsch \cite[Prop.~6]{pitsch}.
\end{proof}

\begin{corollary}
There is a natural injective homomorphism
$$
\Aut \pi_1(U,x) \hookrightarrow \Gr^{M^U}_0\! \cG_{S,D}.
$$
\end{corollary}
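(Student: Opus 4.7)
The plan is to construct the map as a composition and then match kernels. By Theorem~\ref{thm:M_Lambda}(i) the image of $\Lambda_{U,D}\to\cG_{S,D}$ lies in $M^U_0\cG_{S,D}$, so composing with the canonical projection
$$
M^U_0\cG_{S,D}\to M^U_0\cG_{S,D}/M^U_{-1}\cG_{S,D}=:\Gr^{M^U}_0\cG_{S,D}
$$
yields a homomorphism $\Lambda_{U,D}\to\Gr^{M^U}_0\cG_{S,D}$. Griffiths' theorem makes $\Lambda_{U,D}\twoheadrightarrow\Aut\pi_1(U,x)$ surjective, so this homomorphism will descend to an injection from $\Aut\pi_1(U,x)$ as soon as its kernel on $\Lambda_{U,D}$, namely $\Lambda_{U,D}\cap M^U_{-1}\cG_{S,D}$, coincides with the kernel $\Lambda_{U,D}\cap M^U_{-2}\cG_{S,D}$ provided by Theorem~\ref{thm:M_Lambda}(ii). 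The whole corollary therefore reduces to proving $\Gr^{M^U}_{-1}\g_{S,D}=0$, since this forces $M^U_{-1}\g_{S,D}=M^U_{-2}\g_{S,D}$ and hence $M^U_{-1}\cG_{S,D}=M^U_{-2}\cG_{S,D}$.

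I would establish this vanishing by a parity argument on $M^U$-weights. Choose a pants decomposition $\gamma$ of $(S,D)$ each of whose curves bounds a disk in $U$, so that $M^U_\dot=M^\gamma_\dot$ by Corollary~\ref{cor:M^U}. Then $\langle\gamma\rangle\subseteq H:=H_1(S)$ is the Lagrangian $\ker\{H_1(S)\to H_1(U)\}$, so the Picard-Lefschetz formula gives $\im N_\gamma=\langle\gamma\rangle=\langle\gamma\rangle^\perp=\ker N_\gamma$. Consequently $\Gr^{M^\gamma}_{-1}H=\ker N_\gamma/\im N_\gamma=0$, and the $M^U$-graded pieces of $H$ sit at the even positions $\{-2,0\}$. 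Using the natural splitting $\g_{S,D}\cong\prod_{k,m}\Gr^{M^U}_k\Gr^W_m\g_{S,D}$ of Theorem~\ref{thm:wt_mcg}, the vanishing of $\Gr^{M^U}_{-1}\g_{S,D}$ reduces to that of each $\Gr^{M^U}_{-1}\Gr^W_m\g_{S,D}$.

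To finish, I would invoke the description of $\Gr^W_m\g_{S,D}$ used in the proof of Proposition~\ref{prop:parabolic}: by Johnson's theorem, $\Gr^W_m\g_{S,D}$ is an $\sp(H)$-subquotient of a tensor combination of $H$ and $\Lambda^3H\subseteq H^{\otimes 3}$. Since $M^U$-weights add under tensor products and each elementary factor has only even $M^U$-weights, so does any such combination, and hence so does every $\sp(H)$-subquotient; this yields $\Gr^{M^U}_{-1}\Gr^W_m\g_{S,D}=0$ for all $m$. The main obstacle I anticipate is the bookkeeping in this last step: one must check that Johnson's realization of $\Gr^W_m\g_{S,D}$ as a subquotient of tensor powers of $H$ is strictly compatible with the bigraded splitting $\Gr^{M^U}\Gr^W$ supplied by Theorem~\ref{thm:wt_mcg}, so that the parity statement on $M^U$-weights of $H$ transfers cleanly to $\g_{S,D}$.
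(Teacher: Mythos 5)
Your proposal is correct, and it fleshes out what the paper leaves implicit: the corollary is stated without proof as an immediate consequence of Theorem~\ref{thm:M_Lambda} together with Griffiths' surjectivity, and the missing step is precisely the identification $M^U_{-1}\cG_{S,D}=M^U_{-2}\cG_{S,D}$ that you supply via the parity argument. (You also tacitly use that $\Lambda_{U,D}\to\cG_{S,D}$ is injective — Proposition~\ref{prop:folk} plus the injectivity of $\G_{S,D}\to\cG_{S,D}$ when $\#D\geq 1$ — which is needed to identify the kernel with $\Lambda_{U,D}\cap M^U_{-1}\cG_{S,D}$.)

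One small imprecision in your last step: the $M^U$-weights do not literally ``add under tensor products,'' because the relative weight filtration on $\Gr^W_m\g_{S,D}$ is re-centered at $m$ rather than at the internal weight of whatever tensor power of $H$ one realizes it as a subquotient of. What does add is the un-recentered monodromy weight grading $W(N)_\dot$ on $H^{\otimes r}$, whose weights have parity $r\bmod 2$. Since, by the Johnson-homomorphism realization used in the proof of Proposition~\ref{prop:parabolic}, $\Gr^W_m\g_{S,D}$ arises as an $\sp(H)$-subquotient of a sum of $H^{\otimes r}$ with $r\equiv m\pmod 2$, and since $M^{(m)}_k=W(N)_{k-m}$, one gets $k\equiv r+m\equiv 2m\equiv 0\pmod 2$, giving the desired vanishing of $\Gr^{M^U}_{k}\Gr^W_m\g_{S,D}$ for odd $k$. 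So your anticipated ``bookkeeping obstacle'' is real but resolves exactly as you hoped; the compatibility of the monodromy weight filtration with $\sp(H)$-equivariant subquotients is automatic since $N\in\sp(H)$.
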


This homomorphism induces homomorphisms on the relative completion of
$\Aut^+\pi_1(U,x)$ and $\Out^+\pi_1(U,x)$. Surprisingly, these are not
surjective. Equivalently, the injection in the previous corollary is not
Zariski dense.

\begin{proposition}
If $g\ge 3$, then the induced homomorphisms
$$
\a_n \to \Gr^{M^U}_0\! \g_{S,x} \text{ and } \ia_n \to \Gr^{M^U}_0\! \g_{S}
$$
are not surjective.
\end{proposition}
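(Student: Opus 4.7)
The plan is to argue at the $H_1$-level, exhibiting a nonzero summand of $\Gr^{M^U}_0 H_1(\u_{S,\dot})$ that cannot be in the image of $\ia_n$. Write $H = H_1(S)$, $A = \Gr^{M^U}_0 H$ and $B = \Gr^{M^U}_{-2} H$. Because $M^U$ arises from a rational pants decomposition, $A \oplus B = H$ is a Lagrangian splitting with $A \cong H_1(U) = V_n$ (meridian classes) and $B \cong V_n^\ast$ via the symplectic form. The decomposition
$$
\Lambda^3 H = \Lambda^3 A \oplus (\Lambda^2 A \wedge B) \oplus (A \wedge \Lambda^2 B) \oplus \Lambda^3 B
$$
splits into pure $M^U$-weights $0,-2,-4,-6$. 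Johnson's theorem together with the refinements of \cite{hain:torelli} gives, for $g \ge 3$, $H_1(\u_{S,x}) \cong \Lambda^3 H$ and $H_1(\u_S) \cong \Lambda^3 H /(H \wedge \omega)$, where $\omega \in \Lambda^2 H$ is the symplectic class, of $M^U$-weight $-2$. Thus $H \wedge \omega$ lies in $M^U$-weight $\le -2$, and in either case $\Gr^{M^U}_0 H_1(\u_{S,\dot}) \supseteq \Lambda^3 A \ne 0$ for $n = g \ge 3$. The strictness of the bracket of $\u_{S,\dot}$ with respect to $M^U$ (Theorem~\ref{thm:wt_mcg}) yields $H_1(\Gr^{M^U}_0 \u_{S,\dot}) \cong \Gr^{M^U}_0 H_1(\u_{S,\dot})$, so the target retains the $\Lambda^3 A$-summand.

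I will then identify the image. By a theorem of Magnus, $H_1(\ia_n) \cong V_n^\ast \otimes \Lambda^2 V_n$; under $V_n = A$ and $V_n^\ast \cong B$ this becomes $B \otimes \Lambda^2 A$. A direct check on Magnus's generators $K_{ij} : a_i \mapsto a_j^{-1} a_i a_j$ and $K_{ijk} : a_i \mapsto a_i [a_j,a_k]$ of $IA_n$, each realized by a mapping class in $\Lambda_U$, shows that the induced map $H_1(\ia_n) \to H_1(\u_{S,\dot})$ coincides under these identifications with the natural inclusion $B \otimes \Lambda^2 A = \Lambda^2 A \wedge B \hookrightarrow \Lambda^3 H$: the surface-level Johnson invariant of each Magnus generator, expressed in $\Lambda^3 H$, carries exactly one factor from $B$. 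The image therefore lives in $M^U$-weight $-2$ and vanishes in $\Gr^{M^U}_0 H_1(\u_{S,\dot})$. Combining with the previous step, $\ia_n$ misses the $\Lambda^3 A$-summand on $H_1$, and hence $\ia_n \to \Gr^{M^U}_0 \u_{S,\dot}$ cannot be surjective. This proves the closed-surface assertion $\ia_n \to \Gr^{M^U}_0 \g_S$.

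For the pointed statement $\a_n \to \Gr^{M^U}_0 \g_{S,x}$, the image in the prounipotent radical $\Gr^{M^U}_0 \u_{S,x}$ coincides with the image of the Torelli subalgebra $\ia_n$, which by the previous paragraph is not surjective, so the full map is not surjective either. The main obstacle will be the verification in the second step: one must reconcile Magnus's purely algebraic description of $H_1(\ia_n)$ with the surface-level Johnson homomorphism via the handlebody identification, and confirm that each Magnus generator, realized in $\Lambda_U$, has surface Johnson invariant in $\Lambda^3 H$ with precisely one $B$-factor. Once this $\gl_n$-equivariant identification is in hand, the rest is strictness together with elementary weight bookkeeping.
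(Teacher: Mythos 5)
There is a genuine gap, and it lies in the weight bookkeeping at the very start. The relative weight filtration $M_\dot^{(m)}$ on the graded quotient $\Gr^W_m V$ is, by definition, the monodromy weight filtration of $\Gr^W_m N$ \emph{recentred at $m$} (this is the point of the notation $M^{(m)}_\dot = W(N_m)[m]_\dot$). In particular, on $\Gr^W_{-1}\g_{S,x}$ the filtration $M^U_\dot$ is centred at $-1$, not at $-3$. Your decomposition
$$
\Lambda^3 H = \Lambda^3 A \oplus (\Lambda^2 A\wedge B) \oplus (A\wedge\Lambda^2 B)\oplus\Lambda^3 B
$$
has monodromy weights $3,1,-1,-3$ (centred at $0$), so after the shift by $-1$ the $M^U$-weights are $2,0,-2,-4$. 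You have instead assigned weights $0,-2,-4,-6$, which is the centring at $-3$ appropriate to $H^{\otimes 3}$ but not to $\Gr^W_{-1}\g_{S,x}$ (the $\Sp(H)$-isomorphism $\Gr^W_{-1}\g_{S,x}\cong\Lambda^3 H$ does not preserve the $W$-grading --- it involves a Tate twist by $2$). With the correct centring one finds $\Gr^{M^U}_0\Gr^W_{-1}\g_{S,x}\cong \Lambda^2 A\wedge B\cong B\otimes\Lambda^2 A\cong\Hom(A,\L_2(A))$, whereas $\Lambda^3 A$ sits in $\Gr^{M^U}_2$, outside the target altogether.

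This is not a cosmetic slip: it breaks the argument. The piece you exhibit as ``escaping the image,'' namely $\Lambda^3 A$, is not in $\Gr^{M^U}_0$ at all, and the piece you identify as the image of $\ia_n$, namely $B\otimes\Lambda^2 A$, is the \emph{entire} degree-$0$ part of $\Gr^W_{-1}$. Indeed, as the paper records, the map $\IA_n\to\Gr^{M^U}_0\U_{S,x}$ induces Magnus's isomorphism $H_1(\IA_n)\cong\Hom(A,\L_2(A))$ on abelianizations, so the induced map is an \emph{isomorphism} on $\Gr^W_{-1}$. There is no obstruction at the $H_1$ level; your proposed argument, run with the correct weights, proves nothing. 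The actual failure appears one weight lower: the paper shows $\Gr^{M^U}_0\Gr^W_{-2}\g_{S,x}\cong\Hom(A,\L_3(A))$, computes the image of $[\ia_n,\ia_n]$ as a quotient of $\Lambda^2\Hom(A,\L_2(A))$, and observes that the $\gl(A)$-summand $S^2A$ of $\Hom(A,\L_3(A))$ does not occur there. That computation at $\Gr^W_{-2}$ is the essential content, and it does not appear in your proposal.
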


\begin{proof}[Sketch of Proof]
Denote $H_1(S)$ by $H$ and $H_1(U)$ by $A$. Denote the relative weight
filtration $M_\dot^U$ by $M_\dot$. Denote the kernel of $H\to A$ by $B$. Then $A
= \Gr^M_0 H$ and $B=\Gr^M_{-2}H$. Since $\Gr^W_0\g_{S,x} = \sp(H) \cong S^2H$,
it follows that
$$
\Gr^M_0 \Gr^W_0 \g_{S,x} \cong A\otimes B \cong \End(A) \cong \gl(A).
$$
There are natural $\sp(H)$-equivariant isomorphisms
$$
H_1(\u_{S,x})  \cong \Gr^W_{-1}\g_{S,x} \cong H_1(T_{S,x}) \cong \Lambda^3 H
$$
given by the Johnson homomorphism and general results in \cite{hain:torelli}.
The exactness properties of $\Gr^M_\dot$ and $\Gr^W_\dot$ imply that there
are $\gl(A)$-equi-variant isomorphisms
$$
\Gr^M_0\Gr^W_{-1} \g_{S,x} \cong B\otimes\Lambda^2 A \cong \Hom(A,\L_2(A)),
$$
where $\L_m(A)$ denotes the $m$th graded quotient of the free Lie algebra
generated by $A$. Moreover, the mapping $\IA_n \to \Gr^M_0\U_{S,x}$ induces
Magnus' isomorphism
$$
H_1(\IA_n) \to \Hom(A,\L_2(A)).
$$
By \cite[(10.1),\S11]{hain:torelli}, the second weight graded quotient of
$\g_{S,x}$ is the sum of the $\Sp(H)$-modules that corresponds to the partitions
$[2,2]$ and $[1,1]$. A straightforward linear algebra computation shows that, as
$\gl(A)$-modules,
$$
\Gr^M_0\Gr^W_{-2} \g_{S,x} \cong B\otimes \L_3(A) \cong \Hom(A,\L_3(A)).
$$
Alternatively, it is isomorphic to the kernel of the natural surjection
$S^2\Lambda^2 H \to \Lambda^4 H$ minus a copy of the trivial representation. The
image of $[\ia_n,\ia_n]$ in this group is a quotient of
$$
\Lambda^2 H_1(\IA_n) = \Lambda^2 \Hom(A,\L_2(A)).
$$
Since $S^2 A$ is a summand of $\Hom(A,\L_3(A))$ but not of this group, the
homomorphism $\ia_n \to \Gr^M_0\u_{S,x}/W_{-3}$ is not surjective. The result
follows.
\end{proof}

This result shows that the relative weight filtration of $\cG_{S,x}$ is not
simply obtained by taking the Zariski closure of a filtration of $\G_{S,x}$.

At first glance, this result appears to contradict Theorem~\ref{thm:M_Lambda}.
and the fact that the image of $T_{S,x} \to \U_{S,x}$ is Zariski dense. However,
these results simply say that given $n\ge 1$ and a $\Q$-rational element $\phi$
of $M_0\U_{S,x}/W_{-n}$, there exists $\psi\in T_{S,x}$ and a positive integer
$m$ such that
$$
\phi^m \equiv \psi \bmod W_{-n}\U_{S,x}.
$$
The previous two results imply that when $n>2$, it is not always possible to
choose $\psi$ to lie in $\Lambda_{U,x} = \G_{S,x} \cap M_0\cG_{S,x}$.

Theorem~\ref{thm:M_Lambda} and Proposition~\ref{prop:parabolic} yield the
following strengthening Theorem~\ref{thm:pants}. It says that the different ways
of writing $(S,x)$ as the boundary of a handlebody is faithfully represented in
the set of relative weight filtrations of $\g_{S,x}$.

\begin{corollary}
\label{cor:equiv}
For two pants decompositions $\gamma_1$ and $\gamma_2$ of a stable decorated
surface $(S,D)$, where $\#D = 1$, the following are equivalent:
\begin{enumerate}

\item $\gamma_1$ and $\gamma_2$ are connected by $A$-moves;

\item $U^{\gamma_1} = U^{\gamma_2}$;

\item the associated relative weight filtrations $M_\dot^{\gamma_1}$ and
$M_\dot^{\gamma_2}$ of $\g_{S,D}$ are equal.

\end{enumerate}
\end{corollary}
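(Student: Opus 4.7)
The equivalence $(1) \Leftrightarrow (2)$ is Theorem~\ref{thm:pants}, so it suffices to establish $(2) \Leftrightarrow (3)$. The implication $(2) \Rightarrow (3)$ is built into Corollary~\ref{cor:M^U}, which defines $M_\dot^U$ on $\g_{S,D}$ as the common value of $M_\dot^\gamma$ over all pants decompositions $\gamma$ whose curves bound disks in $U$; thus if $U^{\gamma_1} = U^{\gamma_2} = U$ then $M_\dot^{\gamma_1} = M_\dot^U = M_\dot^{\gamma_2}$.

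For the converse, suppose $M_\dot^{\gamma_1} = M_\dot^{\gamma_2}$ on $\g_{S,D}$ and set $U_j := U^{\gamma_j}$. The plan is to transfer this equality to $\p(S,x)$, apply Proposition~\ref{prop:Mp}, and then recover the handlebody from the resulting kernel inside $\pi_1(S,x)$. First I would pick a base point $x$ of $S$, lift each $\gamma_j$ to a curve system $\gammatilde_j$ on $(S, D\cup\{x\})$, and appeal to Theorem~\ref{thm:wt_mcg}: both the tautological map $\g_{S,D\cup\{x\}} \to \g_{S,D}$ and the action $d\phitilde_x: \g_{S,D\cup\{x\}} \to \Der\p(S,x)$ are strict with respect to the relative weight filtrations. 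Consequently the hypothesis on $\g_{S,D}$ promotes to $M_{-2}^{U_1} \p(S,x) = M_{-2}^{U_2} \p(S,x)$. Proposition~\ref{prop:Mp} identifies each of these with $\ker\{\p(S,x) \to \f(U_j,x)\}$, and intersecting with the image of $\pi_1(S,x) \hookrightarrow \p(S,x)$ (using that $\pi_1(U_j,x) \hookrightarrow \f(U_j,x)$ since $\pi_1(U_j,x)$ is free) yields
$$
\ker\{\pi_1(S,x) \to \pi_1(U_1,x)\} = \ker\{\pi_1(S,x) \to \pi_1(U_2,x)\}.
$$

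The final step is classical: by a result of Griffiths \cite{griffiths} (via Dehn's lemma and the loop theorem), a handlebody bounded by $S$ is determined up to diffeomorphism rel $S$ by the kernel of $\pi_1(S) \to \pi_1(U)$ (equivalently, by the normal closure of a meridian cut system in $\pi_1(S)$); so $U_1$ and $U_2$ agree rel $S$ and $U^{\gamma_1} = U^{\gamma_2}$. The main obstacle is the transfer step: promoting equality of filtrations on the Lie algebra $\g_{S,D}$ to equality on $\p(S,x)$. This is essentially a bookkeeping exercise with the functoriality and strictness packaged into Theorem~\ref{thm:wt_mcg}, but it is what lets us move from the abstract statement in (3) to the concrete topological statement in (2), at which point the Griffiths--Luft--Pitsch theory closes the loop.
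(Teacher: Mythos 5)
Your handling of $(1) \Leftrightarrow (2)$ and $(2) \Rightarrow (3)$ matches the paper: the former is Theorem~\ref{thm:pants}, and the latter is Proposition~\ref{prop:invariance} (or, equivalently, the well-definedness of $M_\dot^U$ in Corollary~\ref{cor:M^U}). For the key implication $(3) \Rightarrow (2)$, however, you take a genuinely different route from the one in the paper. The paper argues by contrapositive: if $U_1 \neq U_2$, choose $\phi \in \G_{S,D}$ extending to a diffeomorphism $U_1 \to U_2$; then $\Ad(\phi)M_\dot^{U_1} = M_\dot^{U_2}$, and since $\phi \notin \Lambda_{U_1}$, Theorem~\ref{thm:M_Lambda} gives $\phi \notin M_0^{U_1}\cG_{S,D}$, whence Proposition~\ref{prop:parabolic} (the self-normalizing, "parabolic" property of $M^\gamma_0\g_{S,D}$) forces $\Ad(\phi)M_0^{U_1} \neq M_0^{U_1}$. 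Your route instead pushes the equality of filtrations down to $\p(S,x)$, uses Proposition~\ref{prop:Mp} to translate $M_{-2}^{U}$ into $\ker\{\p(S,x)\to\f(U,x)\}$, intersects with $\pi_1(S,x)$, and invokes the Griffiths/loop-theorem fact that a handlebody with boundary $S$ is determined by $\ker\{\pi_1(S,x)\to\pi_1(U,x)\}$. Both are valid; the paper's is more "structural" and reuses the machinery of Sections~8--10, while yours is more topological and has the mild advantage of not visibly requiring the $g \ge 2$ restriction built into Proposition~\ref{prop:parabolic}.

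One piece of advice on the transfer step, which as you say is the crux: the ingredient that makes it work is not really strictness per se, but the \emph{uniqueness} of the relative weight filtration together with the fact that the action map preserves it. Concretely: from $M_\dot^{\gamma_1}=M_\dot^{\gamma_2}$ on $\g_{S,D}$ one reads off $\langle\gamma_1\rangle=\langle\gamma_2\rangle$ from $\Gr^W_0\g_{S,D}\cong\sp(H)$, which matches the monodromy weight filtrations on each $\Gr^W_m\p(S,x)$; and one has $N_{\gamma_2}\in M^{\gamma_2}_{-2}\g = M^{\gamma_1}_{-2}\g$, so, since $d\phitilde_x$ is compatible with $M_\dot^{\gamma_1}$, the derivation $\ad(N_{\gamma_2})$ shifts $M_\dot^{\gamma_1}\p(S,x)$ by $-2$. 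Uniqueness then forces $M_\dot^{\gamma_1}=M_\dot^{\gamma_2}$ on $\p(S,x)$. (Also, a small imprecision: intersect $\pi_1(S,x)$ with the prounipotent group $\cP(S,x)$, not with the Lie algebra $\p(S,x)$; and the ``Griffiths'' fact you quote is most cleanly deduced from what the paper actually cites from \cite{griffiths}, namely that $\Lambda_{U,x}$ is the stabilizer in $\G_{S,x}$ of $\ker\{\pi_1(S,x)\to\pi_1(U,x)\}$.) With those clarifications your argument closes.
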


\begin{proof}
Theorem~\ref{thm:pants} gives the equivalence of (i) and (ii).
Proposition~\ref{prop:invariance} established that (i) implies (iii). It remains
to prove that (iii) implies (ii). We will show that not (ii) implies not (iii).

Set $U_j = U^{\gamma_j}$ and $M_\dot^{\gamma_j} = M_\dot^{U_j}$. There exists
$\phi \in \G_{S,D}$ that extends to a diffeomorphism $\phitilde : U_1 \to U_2$.
Then
$$
\Lambda_{U_2} = \phi \Lambda_{U_1} \phi^{-1} \text{ and } M_\dot^{U_2} =
\Ad(\phi)M_\dot^{U_1}.
$$
If $U_1 \neq U_2$, then $\phi \notin \Lambda_{U_1}$. Theorem~\ref{thm:M_Lambda}
implies that $\phi \notin M_0^{U_1}\cG_{S,D}$. We will prove the result by
showing that $\phi$ does not normalize $M_0^{U_1}\cG_{S,D}$.

Since $\cG_{S,D}$ is connected, it suffices to prove the Lie algebra version: if
$X\in \g_{S,D}$ and $X\notin M^{U_1}_0\g_{S,D}$, then $X$ does not normalize
$M^{U_1}_0\g_{S,D}$. But this follows directly from
Proposition~\ref{prop:parabolic}.
\end{proof}

\section{Extending Diffeomorphisms to Handlebodies}
\label{sec:applications}

In this section we give an application to the problem of bounding the subset of
elements of $\G_{S,D}$ consisting of mapping classes that extend to some
handlebody. Similar results have been obtained independently by Jamie Jorgensen
\cite{jorgensen}.

View $\cG_{S,D}$ as a proalgebraic variety over $\Q$. It is filtered by its
weight filtration
$$
\cG_{S,D} = W_0\cG_{S,D} \supseteq W_{-1}\cG_{S,D} \supseteq W_{-2}\cG_{S,D}
\supseteq \cdots
$$
where $W_{-1}\cG_{S,D} = \U_{S,D}$ and $W_{-m}\cG_{S,D}$ is the $m$th term of
the lower central series of $\U_{S,D}$. Recall from \cite{hain:torelli} that
when $g\ge 3$ and $m\neq 2$, $\Gr^W_{-m}\U_{S,D}$ is isomorphic to the $m$th
graded quotient of the lower central series of the Torelli group $T_{S,D}$
tensored with $\Q$.

Write $S$ as the boundary of a handlebody $U$. Then the set of elements of
$\G_{S,D}$ that extend across some handlebody with boundary $S$ is
$$
C := \bigcup_{\phi \in \G_{S,D}} \phi \Lambda_{U,D} \phi^{-1}.
$$
For all $m\ge 1$, set $C_m = C\cap W_{-m}\cG_{S,D}$. Denote the Zariski closure
of $C_m$ in $\cG_{S,D}$ by $X_m$.

\begin{theorem}
If $(S,D)$ is a stable decorated surface, then $X_m$ is a proper subvariety of
$W_{-m}\cG_{S,D}$ for all
$$
m \ge 
\begin{cases}
4 & \text{ when } g=3; \cr
2 & \text{ when } g =4,5,6; \cr
1 & \text{ when } g \ge 7.
\end{cases}
$$
\end{theorem}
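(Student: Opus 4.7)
The plan is to reduce the theorem, via the relative weight filtration, to a dimension count for an $\Sp(H)$-orbit inside the abelian proalgebraic group $\Gr^W_{-m}\cG_{S,D}$.

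I first show that $X_m$ is stable under $\Ad(\cG_{S,D})$. The set $C$ is manifestly a union of $\G_{S,D}$-conjugacy classes, so the same is true of $C_m$; hence $X_m$, being its Zariski closure in $W_{-m}\cG_{S,D}$, is $\Ad(\G_{S,D})$-invariant. Because $\G_{S,D}$ is Zariski dense in $\cG_{S,D}$ and $X_m$ is closed, it is $\Ad(\cG_{S,D})$-invariant. Since the prounipotent radical $\U_{S,D}=W_{-1}\cG_{S,D}$ acts trivially on every weight graded quotient, the image $Y_m$ of $X_m$ in $\Gr^W_{-m}\cG_{S,D}$ is stable under the conjugation action of $\Sp(H)$. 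Because $W_{-m}\cG_{S,D}\to\Gr^W_{-m}\cG_{S,D}$ is surjective, it suffices to show $Y_m\ne\Gr^W_{-m}\cG_{S,D}$.

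Next I bound $Y_m$. Lemma~\ref{lem:M_0} gives $\Lambda_{U,D}\subseteq M_0^U\cG_{S,D}$, and combined with the $\Ad$-equivariance $M_\dot^{\phi U}=\Ad(\phi)M_\dot^U$ this yields $C\subseteq\bigcup_\phi M_0^{\phi U}\cG_{S,D}$, hence
$$
Y_m\subseteq\overline{\bigcup_{\phi\in\Sp(H)}\Ad(\phi)V_U},\qquad V_U:=M_0^U\Gr^W_{-m}\cG_{S,D},
$$
the bar denoting Zariski closure. By the bigraded splitting of Theorem~\ref{thm:wt_mcg}, $V_U$ is the sum of the $M^U$-graded pieces of $\Gr^W_{-m}\u_{S,D}$ in non-positive degree; its complement
$$
N_m^U:=\bigoplus_{k>0}\Gr^M_k\Gr^W_{-m}\u_{S,D}
$$
is the strictly positive $\xi$-eigenspace part, with $\xi$ as in Example~\ref{ex:wt_filt}. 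By Proposition~\ref{prop:rel_wt}, the filtration $M^U_\dot$ on $H=H_1(S)$ is determined by the Lagrangian $A=\ker(H_1(S)\to H_1(U))$, so the Siegel parabolic $P\subseteq\Sp(H)$ that fixes $A$ stabilizes $V_U$. Consequently the conjugation map $\Sp(H)\times V_U\to\Gr^W_{-m}\cG_{S,D}$ has image of dimension at most $\dim V_U+\dim(\Sp(H)/P)=\dim V_U+g(g+1)/2$, and $Y_m$ is proper in $\Gr^W_{-m}\cG_{S,D}$ whenever
$$
\dim N_m^U>\frac{g(g+1)}{2}.
$$

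It remains to verify this inequality in each claimed range. When $m=1$, Johnson's theorem identifies $\Gr^W_{-1}\u_{S,D}\cong\Lambda^3 H$ (for $\#D=1$) and $N_1^U\cong\Lambda^3 A$ has dimension $\binom{g}{3}$; the inequality $\binom{g}{3}>g(g+1)/2$ reduces to $g^2-6g-1>0$, which first holds at $g=7$. For $g\in\{4,5,6\}$ and $m=2$ one uses the description of $\Gr^W_{-2}\u_{S,D}$ recalled in the proof of Proposition~\ref{prop:parabolic} (the kernel of $S^2\Lambda^2 H\to\Lambda^4 H$ less a trivial summand) and computes the dimension of its positive-$M$ part by counting $\xi$-eigenvalues; the required inequality holds for $g\ge 4$. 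The case $g=3$, $m\ge 4$ is handled by the analogous count for $\Gr^W_{-m}\u_{S,D}$ viewed as an $\Sp(H)$-equivariant quotient of $L^m(\Lambda^3 H)$.

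The main obstacle lies in this final case-by-case dimension count: although the general principle is identical in all cases, the arithmetic requires explicit control over the $\Sp(H)$-decomposition of $\Gr^W_{-m}\u_{S,D}$ for the small $(g,m)$ of interest, together with some care about low-genus exceptional modules. The remainder of the argument is formal and depends only on the equivariance of $M^U_\dot$, Lemma~\ref{lem:M_0}, and the simultaneous splitting of $M^U_\dot$ and $W_\dot$ furnished by Theorem~\ref{thm:wt_mcg}.
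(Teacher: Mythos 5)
Your reduction is structurally the same as the paper's. You replace the paper's explicit appeal to Chevalley's constructibility theorem with the direct observation that $M_0^U\Sp(H)$ (the Siegel parabolic) stabilizes $V_U := M^U_0\Gr^W_{-m}\cG_{S,D}$, so that the conjugation image is covered by $\Sp(H)/M_0^U\Sp(H)$ translates of $V_U$; the paper's Chevalley lemma gives the same bound (it is invoked precisely to produce $\dim\overline{\Sp(H)\cdot Y}\le\dim Y+\dim\Sp(H)-\dim M_0^U\Sp(H)$, which is your inequality). Your preliminary observation that $X_m$ is $\Ad(\cG_{S,D})$-invariant and descends to an $\Sp(H)$-stable subset of $\Gr^W_{-m}\cG_{S,D}$ makes explicit something the paper passes over, and is correct. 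Your $m=1$ computation ($\Gr^M_2\Gr^W_{-1}\u\cong\Lambda^3 A$, $\binom{g}{3}>g(g+1)/2$ first at $g=7$) agrees with the paper.

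The genuine gap is in the final step. The theorem asserts properness for \emph{all} $m$ above the stated threshold, and the paper's mechanism for controlling $\dim\Gr^{M^U}_2\Gr^W_{-m}\g_{S,D}$ uniformly in $m$ is the theorem of Oda and Asada-Nakamura \cite{oda,asada-nakamura}: for every $m>0$ the $\Sp(H)$-module $\Gr^W_{-m}\u_{S,D}$ contains a specific two- or three-row irreducible, whose highest-weight vector sits in the top $M^U$-graded piece, together with the explicit dimension formula for the corresponding $\gl_g$-modules and its monotonicity in $m$. Your write-up only verifies $m=1$, invokes the $m=2$ structure theorem from \cite{hain:torelli} for $g\in\{4,5,6\}$ without carrying the count through, and then says the remaining cases (including $g=3$, \emph{all} $m\ge 4$, and $g\ge 4$, \emph{all} $m\ge 3$) are ``analogous.'' They are not reachable by inspecting finitely many low-degree pieces: one must exhibit a growing family of irreducibles inside $\Gr^W_{-m}\u_{S,D}$ whose top $M^U$-piece beats $g(g+1)/2$, and this is exactly the nontrivial representation-theoretic input you are missing. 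So your characterization of what remains as ``explicit control over the $\Sp(H)$-decomposition \ldots\ for the small $(g,m)$ of interest'' understates the problem.

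Two smaller remarks. First, you write ``$A=\ker(H_1(S)\to H_1(U))$''; in the paper's notation this kernel is $B=\Gr^{M^U}_{-2}H$, while $A=\Gr^{M^U}_0 H\cong H_1(U)$; both spaces are Lagrangian and the Siegel parabolic stabilizing the filtration $M^U_\dot H$ is the relevant subgroup, so the argument survives the slip, but the notation should be corrected. Second, your identification of $N^U_m$ as ``the strictly positive $\xi$-eigenspace part'' conflates positive $M$-index with positive $\xi$-eigenvalue; these differ by the shift $m$ (the $M$-filtration on $\Gr^W_{-m}$ is centered at $-m$). For $m=1$ this makes no difference because $\Gr^{M^U}_1\Lambda^3 H=0$, but for general $m$ one must be careful about which $\xi$-eigenspaces lie outside $M^U_0$.
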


In some sense, this theorem says that most elements of $W_{-m}\G_{S,D}$ do not
extend to any handle body.

Denote the Zariski closure of $\Lambda_{U,D}$ in $\cG_{S,D}$ by $\cL_{U,D}$ and
its intersection with $W_{-m}\U_{S,D}$ by $W_{-m}\cL_{U,D}$. Since $\G_{S,D}$ is
Zariski dense in $\cG_{S,D}$, $C_m$ is contained in the Zariski closure of the
image of the map
$$
F : \cG_{S,D} \times W_{-m}\cL_{U,D} \to \cG_{S,D}
$$
defined by $F(g,\lambda) = g\lambda g^{-1}$.

To prove the result we show that the image of $C_m$ in $\Gr^W_{-m}\cG_{S,D}$ is
contained in a proper subvariety. Since $\Lambda_{U,D}$ is contained in
$M_0^U\cG_{S,D}$, $W_{-m}\cL_{U,D}$ is a subgroup of $M_0^UW_{-m}\cG_{S,D}$.
Consequently, the image of $C_m$ in $\Gr^W_{-m}\cG_{S,D}$ is contained in the
Zariski closure of the image of the map
$$
\Gr^W_0\cG_{S,D} \times M^U_0\Gr^W_{-m}\cG_{S,D} \to \Gr^W_{-m}\cG_{S,D}.
$$
induced by conjugation.

The following is an immediate consequence of a theorem of Chevalley
\cite{chevalley}, which can be found in exercises 3.18 and 3.19 of
\cite[Chap.~II, sect.~3]{hartshorne}.

\begin{lemma}
Suppose that $X$ is a quasi-projective variety over a field and that $Y$ is a
closed subvariety. If $G \times X \to X$ is the action of an algebraic group on
$X$, then the image $G\cdot Y$ of the restricted action $G\times Y \to X$ is a
constructable subset of $X$ whose Zariski closure in $X$ has dimension
$$
\dim \overline{G\cdot Y} = \dim Y + \dim G - \dim G_Y
$$
where $G_Y = \{g\in G: g(Y) \subseteq Y\}$. $\Box$
\end{lemma}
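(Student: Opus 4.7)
The plan is to apply Chevalley's theorem on images of morphisms (Hartshorne, Exs.~II.3.18--19) to the action morphism
$$
\mu : G\times Y \to X,\qquad (g,y)\mapsto g\cdot y.
$$
Constructibility of $G\cdot Y = \mu(G\times Y)$ is then immediate. Since $G\times Y$ is irreducible (as a product of irreducible varieties), the closure $\overline{G\cdot Y}$ is a closed irreducible subvariety of $X$, and the fiber-dimension theorem applied to the dominant morphism $\mu : G\times Y \to \overline{G\cdot Y}$ yields
$$
\dim\overline{G\cdot Y} \;=\; \dim G + \dim Y - e,
$$
where $e$ is the dimension of a generic fiber of $\mu$. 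The remaining task is to identify $e$ with $\dim G_Y$.

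For the lower bound $e\ge\dim G_Y$, I would exhibit a free action of $G_Y$ on $G\times Y$ that preserves the fibers of $\mu$, namely $h\cdot(g,y) = (gh^{-1},hy)$. Freeness is inherited from the right-translation action of $G_Y$ on $G$, and the action is well-defined on the second factor because $h\in G_Y$ sends $Y$ into $Y$. That every fiber of $\mu$ is preserved is the direct check
$$
\mu(gh^{-1},hy) \;=\; gh^{-1}\cdot hy \;=\; g\cdot y \;=\; \mu(g,y).
$$
Hence each nonempty fiber of $\mu$ contains a free $G_Y$-orbit and so has dimension at least $\dim G_Y$.

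The main obstacle is the matching upper bound $e\le\dim G_Y$. By upper semicontinuity of fiber dimension, it suffices to exhibit a single point $x_0\in G\cdot Y$ with $\dim\mu^{-1}(x_0)\le\dim G_Y$. I would take $x_0 = y_0$ for $y_0\in Y$ in general position; the first projection then identifies $\mu^{-1}(y_0)$ with the closed subvariety
$$
T_{y_0} := \{g\in G : g^{-1}y_0\in Y\} \subseteq G,
$$
which contains $G_Y$. The irreducibility of $Y$ will be the decisive input here: for any $g\notin G_Y$ the translate $g^{-1}Y$ is irreducible of the same dimension as $Y$ but distinct from $Y$, so $Y\cap g^{-1}Y$ is a proper closed subvariety of $Y$ of strictly smaller dimension. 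Applying this observation component-wise to the incidence variety
$$
I := \{(g,y)\in G\times Y : g^{-1}y\in Y\},
$$
via its two projections (the first to $G$ has fibers $Y\cap g^{-1}Y$, of maximal dimension precisely when $g\in G_Y$; the second to $Y$ has fibers $T_y$), then confines the generic value of $\dim T_y$ to $\dim G_Y$ and completes the proof.
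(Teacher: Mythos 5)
The paper does not actually prove this lemma: the ``$\Box$'' marks the proof as omitted, and the preceding sentence cites Chevalley (via Hartshorne, II.3.18--3.19) only for the constructibility assertion, not for the dimension formula.  Your argument therefore has to stand on its own.  Constructibility and the \emph{lower} bound $e\ge\dim G_Y$ on the generic fiber dimension $e$ of $\mu:G\times Y\to X$ are both handled correctly and cleanly: the free, fiber-preserving $G_Y$-action $h\cdot(g,y)=(gh^{-1},hy)$ forces every non-empty fiber to have dimension at least $\dim G_Y$, hence $\dim\overline{G\cdot Y}\le\dim G+\dim Y-\dim G_Y$.  (You need, and implicitly use, that $G$ is connected and $Y$ irreducible so that $G\times Y$ is irreducible; irreducibility of $Y$ is also what makes $G_Y$ a closed \emph{subgroup} rather than merely a closed submonoid, since $g(Y)\subseteq Y$ then forces $g(Y)=Y$.)  This ``$\le$'' inequality is in fact the only direction the paper uses in the ensuing codimension estimate, so the paper's application is unaffected by what follows.

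The \emph{upper} bound $e\le\dim G_Y$ is where the gap lies, and the gap cannot be filled because the claimed equality is false in general.  Take $G$ to be the additive group $\bA^2$ acting on $X=\bA^2$ by translation, and $Y=\{y=x^2\}$ the parabola, an irreducible closed curve.  Then $G\cdot Y=\bA^2$ has dimension $2$ and $G_Y=\{0\}$ is trivial, while $\dim Y+\dim G-\dim G_Y=1+2-0=3$.  Concretely, the fiber of $\mu$ over any point $p$ is a translate of $Y$ and has dimension $1>\dim G_Y$.  Your incidence variety $I=\{(g,y)\in G\times Y:g^{-1}y\in Y\}$ has $\dim I=2$ here, whereas $\dim G_Y+\dim Y=1$: the suggested deduction via the two projections of $I$ fails because the stratum of $G$ over which the fiber $Y\cap gY$ of $I\to G$ has dimension $d<\dim Y$ can have codimension far less than $\dim Y-d$, so the part of $I$ lying over $G\setminus G_Y$ can dominate $\dim I$, and the generic fiber $T_y$ of $I\to Y$ need not have dimension $\dim G_Y$.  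As stated the lemma is really the inequality $\dim\overline{G\cdot Y}\le\dim G+\dim Y-\dim G_Y$; equality requires the extra hypothesis that $(G\times Y)/G_Y\to X$ be generically finite onto its image.
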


Recall that $\Gr^W_0\cG_{S,D}\cong \Sp(H)$ where $H=H_1(S)$. We apply the Lemma
to the adjoint action of $G= \Sp(H)$ on
$$
X = \Gr^W_{-m}\cG_{S,D} \cong \Gr^W_{-m}\g_{S,D}
\text{ where }Y = M^U_0 \Gr^W_{-m}\g_{S,D}.
$$
Proposition~\ref{prop:parabolic} implies that $G_Y = M^U_0\Sp(H)$. Applying the
Lemma, and using the fact that $\sp(H) = M^U_2\sp(H)$, we see that the
codimension of the closure of the $\Sp(H)$ orbit of $M^U_0\Gr^W_{-m}\g_{S,D}$
in $\Gr^W_{-m}\g_{S,D}$ satisfies
\begin{multline*}
\codim\overline{\Sp(H)\cdot M^U_0\Gr^W_{-m}\g_{S,D}} \cr
= \dim\Gr^W_{-m}\g_{S,D}/M_0^U - \dim \Sp(H)/M_0\Sp(H)\cr
\ge \dim \Gr^M_2\Gr^W_{-m}\g_{S,D} - \dim \Gr^{M}_2\sp(H).
\end{multline*}
It remains to show this is positive for all $m$ in the statement of the theorem.
First, since $\Gr^M_2\sp(H)$ is the symmetric square of a maximal isotropic
subspace of $H$, it has dimension $g(g+1)/2$.

We use representation theory to find a lower bound for the other term. Each
$\Gr^M_k\Gr^W_m\g_{S,D}$ is a $\Gr^M_0\Gr^W_0\g_{S,D}$-module. Recall from the
proof of Corollary~\ref{cor:equiv} that $\Gr^M_0\Gr^W_0\g_{S,D}$ is isomorphic
to $\gl_g$, so that its irreducible representations are given by Young diagrams
with $\le g$ rows. These are the same Young diagrams that parametrize the
irreducible $\sp(H)$-modules, where $H=H_1(S)$. 

\begin{proposition}
If $g\ge 3$ and $m>1$, then $\Gr^M_2\Gr^W_{-m}\g_{S,D}$ contains the
$\gl_g$-module corresponding to the partition $[k,k]$ when $m=2k$ and $[k,k,1]$
when $m=2k-1$.
\end{proposition}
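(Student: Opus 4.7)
The plan is to work inside the associated bigraded Lie algebra $\Gr^M_\dot\Gr^W_\dot\g_{S,D}$, which by Theorem~\ref{thm:wt_mcg} is (non-canonically) isomorphic to $\g_{S,D}$, and to exhibit the required $\gl_g$-modules explicitly by iterated bracket operations, afterwards checking that they survive the relations in $\u_{S,D}$.

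First I would choose $\gamma$ to be a pants decomposition of $(S,D)$ bounding a handlebody, so that the middle piece $H_0=\Gr^M_{-1}H$ vanishes, $H=A\oplus B$ with $A$ a maximal isotropic subspace of dimension $g$, and $\Gr^M_0\sp(H)=\gl(A)\cong\gl_g$. By Example~\ref{ex:wt_filt} the central element $\xi\in\gl(A)$ acts on $\Gr^M_k\Gr^W_w\g_{S,D}$ as a scalar depending only on $k$ and $w$, which fixes the total number of boxes of any $\gl_g$-irreducible that can occur there. This immediately singles out a small finite set of candidate Schur modules for $\Gr^M_2\Gr^W_{-m}\g_{S,D}$, into which the two-row partition $[k,k]$ (when $m=2k$) and the three-row near-rectangle $[k,k,1]$ (when $m=2k-1$) fit.

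Next I would identify the low-weight generating pieces. Johnson's theorem, in the form used by Hain \cite{hain:torelli}, gives $\Gr^W_{-1}\g_{S,D}\cong\Lambda^3H$ (suitably modified for $\#D$), whose $M$-graded decomposition has top piece $\Lambda^3A=V_{[1,1,1]}\subset\Gr^M_2\Gr^W_{-1}\g_{S,D}$ and lowest non-trivial piece $\Lambda^2A\otimes B\subset\Gr^M_0\Gr^W_{-1}\g_{S,D}$; together with $S^2A=V_{[2]}\subset\Gr^M_2\sp(H)$ these generate the relevant portion of the bigraded algebra. Strict compatibility of the bracket with the bigrading gives
\[
\ad\colon\Gr^M_2\Gr^W_{-1}\g_{S,D}\otimes\Gr^M_0\Gr^W_{-(m-1)}\g_{S,D}\to\Gr^M_2\Gr^W_{-m}\g_{S,D},
\]
and iterating this bracket starting from $\Lambda^3A$ and $\Lambda^2A\otimes B$ produces elements whose $\gl(A)$-content, decomposed by the Littlewood--Richardson rule and then pruned by the $\xi$-eigenvalue constraint of the previous paragraph, contains the rectangular Schur module $V_{[k,k]}$ in the even-$m$ case and its near-rectangular variant $V_{[k,k,1]}$ in the odd-$m$ case. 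I would make the existence unambiguous by writing down explicit highest-weight vectors as Young-symmetrized iterated brackets of specific tensors built from $\Lambda^3A$ and $\Lambda^2A\otimes B$.

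The main obstacle is to verify that these Schur modules are not annihilated by the relations of $\u_{S,D}$. For $g\ge 6$ the presentation is quadratic by \cite{hain:torelli}, so $H_2(\u_{S,D})$ sits as a specific $\Sp(H)$-submodule of $\Lambda^2(\Lambda^3H)$; an $\Sp(H)\downarrow\gl(A)$ branching calculation, combined with the fact that the relation module is concentrated below the top $M$-grade in its own right, then shows that neither $V_{[k,k]}$ nor $V_{[k,k,1]}$ appears among the killed modules in $\Gr^M_2\Gr^W_{-m}\g_{S,D}$. For $3\le g<6$ I would instead exhibit the highest-weight vectors directly at the level of $\g_{S,D}$ by iterated bracket formulas involving lifts of the Johnson generators and use the uniqueness of the relative weight filtration to guarantee that their images in $\Gr^M_2\Gr^W_{-m}\g_{S,D}$ are non-zero of the asserted Schur type.
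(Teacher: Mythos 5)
Your proposal follows a genuinely different route from the paper, and in doing so it opens a gap that it does not close. The paper's proof is short precisely because it outsources the hard part: it cites the theorems of Oda \cite{oda} and Asada--Nakamura \cite{asada-nakamura}, which assert that the $\Sp(H)$-irreducible $V_{[k,k]}$ (resp.\ $V_{[k,k,1]}$) occurs in $\Gr^W_{-m}\u_{S,D}$ for $m=2k$ (resp.\ $m=2k-1$); it then observes that, taking a Borel of $\sp(H)$ whose nilradical contains $\Gr^M_2\sp(H)$, the $\Sp(H)$-highest-weight vector is a $\xi$-eigenvector annihilated by $\Gr^M_2\sp(H)$ and hence lands in a single $M$-graded piece, and that the $\gl_g$-submodule it generates (the Cartan being shared) corresponds to the same partition. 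You, by contrast, set out to \emph{re-derive} the occurrence of these modules from first principles: starting from $\Lambda^3A$, $\Lambda^2A\otimes B$ and $S^2A$, building iterated brackets, decomposing via Littlewood--Richardson and pruning by $\xi$-eigenvalue, and then checking that nothing is killed by the relations of $\u_{S,D}$. That last step is exactly the content of Oda and Asada--Nakamura; it is not a formality, and your proposal only gestures at it (``an $\Sp(H)\downarrow\gl(A)$ branching calculation\dots{} then shows that neither $V_{[k,k]}$ nor $V_{[k,k,1]}$ appears among the killed modules''; ``I would instead exhibit the highest-weight vectors directly''). Without actually producing the highest-weight vectors and verifying their non-vanishing in $\u_{S,D}$ --- which in high degree $m$ is a substantial computation, and which the quadratic presentation of \cite{hain:torelli} only controls for $g\ge 6$ and still requires an explicit degree-$m$ analysis --- the argument is a plan rather than a proof.

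Two further points. First, the Littlewood--Richardson step would, at best, tell you which Schur modules \emph{can} appear in iterated brackets of the generators; it cannot by itself tell you which ones \emph{do} appear with non-zero multiplicity in $\Gr^W_{-m}\u_{S,D}$, and the $\xi$-eigenvalue constraint you invoke only narrows the candidates, it does not certify occurrence. Second, your proposal does correctly isolate the two structural inputs that make the paper's argument work --- the $\gl_g$-decomposition of $\Gr^M_0\Gr^W_0\g_{S,D}$ and the fact that $\xi$ separates the $M$-grades --- so the scaffolding is sound; what is missing is the actual non-vanishing theorem, and the cleanest way to supply it is to cite Oda and Asada--Nakamura as the paper does, rather than to reprove them.
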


\begin{proof}
Results of Oda \cite{oda} and Asada-Nakamura \cite{asada-nakamura} imply that if
$m>0$, then the $\Sp(H)$-module $\Gr^W_{-m}\u_{S,D}$ contains the representation
$[k,k]$ when $m=2k$ and $[k,k,1]$ when $m=2k-1$. If we take $\Gr^M_2\sp(H)$ to
be positive roots of $\sp(H)$, then the highest weight vectors of each of these
representations lies in $\Gr^M_2\Gr^W_{-m}\g_{S,D}$. Since $\gl_g$ is a
subalgebra of $\sp(H)$ with the same Cartan subalgebra, the $\gl_g$-submodule of
$\Gr^M_2\Gr^W_{-m}\g_{S,D}$ generated by $v$ will correspond to the same
partition.
\end{proof}

Using the formula \cite[(6.4)]{fulton-harris}, when $k\ge 2$ we have:
\begin{align*}
\dim V_{[k,k]} &=
\frac{(g-1)(g+k-1)\prod_{j=0}^{k-2}(g+j)^2}{k!(k+1)!}
= \cr
\dim V_{[k,k,1]} &=
\frac{(g-1)(g-2)(g+k-1)\prod_{j=0}^{k-2}(g+j)^2}{(k-1)!(k+2)!}
\end{align*}
where $V_\lambda$ denotes the $\gl_g$-module corresponding to the partition
$\lambda$. These dimensions increase monotonically with $k$. The proof is
completed by an elementary computation, which is left to the reader.

\end{document}